\newtheorem{thm}{Theorem}[section]
\newtheorem{lem}{Lemma}[section]
\newtheorem{cor}{Corollary}[section]
\newtheorem{prop}{Proposition}[section]
\newtheorem{rem}{Remark}[section]
\begin{document}
\numberwithin{equation}{section}

\title[The  Yamabe operator and invariants on OC manifolds]{The  Yamabe operator and invariants on octonionic contact manifolds and convex cocompact subgroups of ${\rm F}_{4(-20)}$}
\author[Yun Shi and  Wei Wang]{Yun Shi$^\dagger$ and  Wei Wang$^\ast$ }

\begin{abstract}
An octonionic contact  (OC) manifold is always  spherical.  We construct the OC Yamabe operator on an OC manifold and prove its transformation formula under  conformal OC transformations. An OC manifold is scalar positive, negative or vanishing if and only if its   OC Yamabe invariant is positive, negative or zero, respectively. On a scalar positive OC manifold, we can construct the Green function of the  OC  Yamabe operator, and apply it to construct a conformally invariant tensor. It becomes an OC metric if the OC positive mass conjecture is true. We also show the connected sum of two scalar positive OC manifolds to be scalar positive if the neck is sufficiently long. On the  OC manifold   constructed from a  convex cocompact subgroup  of ${\rm F}_{4(-20)}$, we construct a  Nayatani type Carnot-Carath\'eodory metric. As a corollary,  such an  OC manifold is scalar positive, negative or  vanishing if and only if the Poincar\'e  critical exponent of the  subgroup is less than,  greater than or equal to  $10,$  respectively.
\end{abstract}
\keywords{  octonionic contact (OC)  manifolds;  the Biquard connection; the  OC  Yamabe operator; transformation formula under conformal OC transformations; convex cocompact subgroups of ${\rm F}_{4(-20)}$;   the octonionic Heisenberg group}
\thanks{The first author is partially supported by National Nature Science
Foundation
  in China (Nos. 11801508,  11971425); The second author is partially supported by National Nature Science
Foundation
  in China (No. 11971425)\\
$^\dagger$Department of Mathematics, Zhejiang University of Science and Technology, Hangzhou 310023, China,
E-mail: shiyun@zust.edu.cn;
 $^\ast$Department of Mathematics, Zhejiang University,  Hangzhou 310027, China,
E-mail: wwang@zju.edu.cn}
\maketitle

\section{Introduction}

Biquard \cite{biquard} introduced notions of quaternionic and octonionic contact manifolds. Recently, it is an active direction to study  quaternionic contact manifolds (see \cite{Barilari} \cite{Ivanov7}-\cite{Ivanov8} \cite{ Minchev} \cite{Shi} \cite{wang2} and reference therein).   An  \emph{octonionic contact manifold} $(M,g,\mathbb{I})$ is a $15$-dimensional manifold  $M$ with a codimension $7$ distribution $H$
locally given as the kernel of a $\mathbb{R}^{7}$-valued $1$-form $\Theta=(\theta_{1},\cdots,\theta_{7})$, on which $g$ is a Carnot-Carath\'eodory metric, where  $\mathbb{I}:=({I_{1},\cdots,I_{7}})$ with $I_\beta\in$End$(H)$ satisfying  the octonionic commutating relation (\ref{Nij}). Note that $\mathbb I$ is a rank-$7$ bundle
\begin{align*}
\mathbb{I}=\{a_1I_{1}+\cdots+a_7I_{7}|a_1^{2}+\cdots+a_7^{2}=1\},
\end{align*}
which consists of endomorphisms of $H$ locally generated by $7$ almost complex structures $I_{1},\cdots,I_{7}$ on $H.$ They are hermitian compatible with the metric:
\begin{align}\label{2.01}
g(I_{\beta}\cdot,I_{\beta}\cdot)=g(\cdot,\cdot),
\end{align} and satisfy  the  compatibility condition
\begin{align}\label{2,1}
g(I_{\beta}X,Y)=\hbox{d}\theta_{\beta}(X,Y),
\end{align}   for any  $X,Y\in H, \beta=1,\cdots,7$. Biquard  \cite{biquard} introduced a canonical connection on an OC manifold, which is  called Biquard connection now.

As pointed by Biquard  \cite{biquard}, any octionionic contact manifold is spherical by a theorem of Yamaguchi \cite{Yamaguchi}.  $f:(M,g,\mathbb I)\rightarrow(M',g',\mathbb I')$ is called {\it$OC$ conformal} if locally we have $f^*g'=\phi g$ for some positive function $\phi>0$ and $f^*\mathbb I'=\Psi\mathbb I$ for some ${\rm SO}(7)$ valued smooth function $\Psi$ \cite{biquard}.
The conformal  class of OC manifolds is denoted by $\left[M,g,\mathbb{I}\right].$ The purpose of this paper is to investigate the conformal geometry of OC manifolds as we have done for spherical CR manifolds \cite{wang1} and spherical qc manifolds \cite{Shi}.
The main difficulty to investigate OC manifolds comes from algebra. The associative algebras $\mathbb R,\mathbb C$ and $\mathbb H$ are replaced by the non-associative octonion  algebra $\mathbb O,$ and the classical Lie groups ${\rm SO,\ SU}$ and ${\rm Sp}$ are replaced by the exceptional Lie group ${\rm F}_{4(-20)}.$ For example, it is more complicated  to describe explicitly  actions of ${\rm F}_{4(-20)}$ on the octionionic hyperbolic space as isometries   and on the octonionic Heisenberg group as conformal transformations, which are given in Section 2.

We give the transformation formula of scalar curvatures under a conformal transformation in Section 3.
\begin{thm}\label{s-curva}
The scalar curvature $s_{\tilde{g}}$ of the Biquard connection of  $(M,\tilde{g},\mathbb{I})$ with $\tilde{g}=\phi^{\frac{4}{Q-2}}g$ satisfies the OC Yamabe equation:
\begin{align}\label{122}
L_g\phi=s_{\tilde{g}}\phi^{\frac{Q+2}{Q-2}},\quad b=\frac{4(Q-1)}{Q-2}=\frac{21}{5},
\end{align}where $L_{g}:=b\Delta_{g}+s_{g}$ is the OC Yamabe operator, and $\Delta_g$ is the SubLaplacian
associated to the Carnot-Carath\'eodory metric $g$  and  $Q=22$ is the homogeneous dimension of $M.$
\end{thm}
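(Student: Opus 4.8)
The plan is to follow the strategy that already works in the CR \cite{wang1} and quaternionic contact \cite{Shi} cases: realise the conformal change as a rescaling of the horizontal metric together with the $\mathbb{R}^7$-valued contact form, compute the difference tensor between the two Biquard connections, feed it into the curvature operator, and take a double trace. Concretely, I would first fix a local orthonormal frame $e_1,\dots,e_8$ of the $8$-dimensional horizontal distribution $H$ together with the seven Reeb-type vector fields dual to $\Theta=(\theta_1,\dots,\theta_7)$, and express the Biquard connection $\nabla$ of $g$, its torsion, and the horizontal curvature, Ricci tensor and scalar curvature $s_g$ in this frame (recall $Q=8+2\cdot 7=22$). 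Since $g(I_\beta X,Y)=\mathrm d\theta_\beta(X,Y)$, a conformal change $\tilde g=\phi^{4/(Q-2)}g$ of the horizontal metric forces the scaling $\tilde\Theta=\phi^{4/(Q-2)}\Psi\,\Theta$ of the contact form for some $\mathrm{SO}(7)$-valued function $\Psi$; I would use the $\mathrm{SO}(7)$-freedom to normalise $\Psi=\mathrm{Id}$, so that everything is governed by the single scalar conformal factor $\phi$.

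The second step is to determine the difference tensor $S:=\tilde\nabla-\nabla$. The Biquard connection is characterised uniquely by preserving the metric and the distribution $H$, acting on the rank-$7$ bundle $\mathbb I$ through an $\mathfrak{so}(7)$-valued connection form, and having torsion of the prescribed Biquard type; imposing these conditions for $\tilde g$ pins down $S$ explicitly as a first-order linear expression in $\mathrm d\log\phi$, with horizontal, vertical and mixed components. It is convenient to write $\phi=f^{(Q-2)/2}$, so that $\tilde g=f^2g$ and $\mathrm d\log\phi=\tfrac{Q-2}{2}\,\mathrm d\log f$, and to carry out this computation component by component, using the octonionic commutation relations (\ref{Nij}) among $I_1,\dots,I_7$ to simplify the contributions of the torsion and of the $\mathfrak{so}(7)$-part.

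The third step is to substitute $\tilde\nabla=\nabla+S$ into the curvature identity $\tilde R=R+\mathrm d^\nabla S+S\wedge S$, contract twice with respect to $\tilde g$ to obtain $s_{\tilde g}$, and expand. The outcome should have the shape $s_{\tilde g}\,\phi^{(Q+2)/(Q-2)}=s_g\phi+c_1\Delta_g\phi+c_2\,|\nabla_H\phi|^2\!/\phi$ for universal constants depending only on $Q$ and on the horizontal dimension; the point of the exponent $\tfrac{4}{Q-2}$ is that the quadratic gradient term $c_2$ cancels identically and $c_1$ collapses to $b=\tfrac{4(Q-1)}{Q-2}=\tfrac{21}{5}$, yielding the asserted OC Yamabe equation $L_g\phi=s_{\tilde g}\phi^{(Q+2)/(Q-2)}$. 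I expect the main obstacle to be precisely this bookkeeping: because $\mathbb O$ is non-associative and the vertical space is $7$-dimensional, the torsion of the Biquard connection and the $\mathfrak{so}(7)$-valued connection form produce many more terms than in the quaternionic case, and verifying that all quadratic-gradient and vertical contributions cancel — leaving a clean second-order operator with the stated coefficient — requires careful use of the identities satisfied by the $I_\beta$. A secondary but necessary check is that $\Delta_g$ as it arises here from tracing $\nabla^2\phi$ over $H$ coincides with the sub-Laplacian of the Carnot–Carath\'eodory metric $g$, so that the equation in the statement is intrinsic.
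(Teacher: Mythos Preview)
Your plan is sound and would work, but the paper takes a somewhat different and more economical route that exploits a special feature of the OC setting: every OC manifold is automatically spherical. Rather than carrying out the full curvature computation $\tilde R=R+\mathrm d^{\nabla}S+S\wedge S$ on a general $(M,g)$, the paper first computes the difference tensor $S=\tilde\nabla-\nabla$ in general (Propositions \ref{H} and \ref{V}, which match your ``second step''), but then traces the curvature only on the flat model $(\mathscr H,g_0)$, where $\nabla$ is flat and the background curvature $R$ vanishes (Proposition \ref{confch}). This yields the special case $b\Delta_{g_0}\phi=s_{\tilde g}\phi^{(Q+2)/(Q-2)}$ of Corollary \ref{s3.1}. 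The passage to a general OC metric $g$ is then handled by a short cocycle argument: write $\tilde g=\phi_1^{4/(Q-2)}g_0$ and $g=\phi_2^{4/(Q-2)}g_0$ in a local spherical chart, and use the identity $\Delta_{g_0}(\phi_2\cdot f)=\Delta_{g_0}\phi_2\cdot f+\phi_2^{(Q+2)/(Q-2)}\Delta_g f$ (Proposition 3.10) together with Corollary \ref{s3.1} applied to $\phi_1$ and $\phi_2$ separately.

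What this buys is precisely the avoidance of the bookkeeping you flag as the main obstacle: on $\mathscr H$ one has $\nabla_{X_a}X_b=0$, $\nabla I_\alpha=0$, and $R=0$, so the long contraction in Section 3.3 reduces to evaluating $\langle U_{X_a}(U_{X_b}X_b),X_a\rangle$, $\langle\nabla_{X_a}(U_{X_b}X_b),X_a\rangle$, and similar terms against the explicit basis, with no background curvature or torsion to track. Your direct approach would recover $s_g$ from the trace of $R$ and would in principle not need sphericality, but the price is exactly the cancellation of many more cross-terms involving the torsion and the $\mathfrak{so}(7)$-part of the connection; the paper sidesteps this entirely.
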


\begin{cor}\label{L-trans}
The   OC Yamabe operator $L_{g}$ satisfies the transformation formula
\begin{align}\label{o121}
L_{\tilde{g}}f=\phi^{-\frac{Q+2}{Q-2}}L_{g}(\phi f),
\end{align}
if $\tilde{g}=\phi^{\frac{4}{Q-2}}g$ and $f\in C^{\infty}(M)$.
\end{cor}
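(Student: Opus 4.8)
The plan is to obtain the operator identity \eqref{o121} as a purely formal consequence of Theorem \ref{s-curva}, applying that theorem along a composition of two conformal changes; no new computation with the Biquard connection or the SubLaplacian will be needed. First I would restate Theorem \ref{s-curva} in pointwise form: whenever $g'=u^{\frac{4}{Q-2}}g$ for a positive $u\in C^{\infty}(M)$, the scalar curvature of the Biquard connection of $(M,g',\mathbb I)$ is
\[
s_{g'}=u^{-\frac{Q+2}{Q-2}}L_{g}u .
\]

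Next, fixing $\tilde g=\phi^{\frac{4}{Q-2}}g$ as in the statement and taking an arbitrary \emph{positive} $\psi\in C^{\infty}(M)$, I would introduce the intermediate metric $\hat g=\psi^{\frac{4}{Q-2}}\tilde g$. Applying the restated formula to the pair $(\tilde g,\hat g)$ gives $L_{\tilde g}\psi=s_{\hat g}\,\psi^{\frac{Q+2}{Q-2}}$. On the other hand $\hat g=(\phi\psi)^{\frac{4}{Q-2}}g$, so applying the same formula to the pair $(g,\hat g)$ gives $s_{\hat g}=(\phi\psi)^{-\frac{Q+2}{Q-2}}L_{g}(\phi\psi)$. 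Eliminating $s_{\hat g}$ between these two relations yields
\[
L_{\tilde g}\psi=\phi^{-\frac{Q+2}{Q-2}}L_{g}(\phi\psi),
\]
which is exactly \eqref{o121} for positive $f=\psi$.

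Finally I would remove the positivity restriction by a linearity/locality argument. The assignment $f\mapsto L_{\tilde g}f-\phi^{-\frac{Q+2}{Q-2}}L_{g}(\phi f)$ is linear in $f$ (each of $L_{\tilde g}$, $L_{g}$, and multiplication by a fixed function is linear) and is a differential operator, hence local; by the previous step it annihilates every positive smooth function. Given an arbitrary $f\in C^{\infty}(M)$ and a point $p\in M$, choosing a constant $c>0$ so large that $f+c>0$ on a neighbourhood of $p$ writes $f$ locally as the difference $(f+c)-c$ of two positive functions, so locality and linearity force the operator to vanish at $p$; as $p$ is arbitrary, \eqref{o121} holds on all of $C^{\infty}(M)$.

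I do not expect a substantial obstacle here: Theorem \ref{s-curva} is the genuine analytic input, already encoding the behaviour of the SubLaplacian and of the scalar curvature of the Biquard connection under conformal rescaling, and the corollary is essentially bookkeeping. The only point worth flagging is that the intermediate metric $\hat g=\psi^{\frac{4}{Q-2}}\tilde g$ is defined only when $\psi>0$, which is precisely why the concluding linearity/locality step is needed to reach general $f$.
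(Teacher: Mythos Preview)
Your argument is correct, and it takes a genuinely different route from the paper's. The paper proves the corollary by combining Theorem~\ref{s-curva} with the separately established SubLaplacian identity
\[
\Delta_{g}(\phi f)=\Delta_{g}\phi\cdot f+\phi^{\frac{Q+2}{Q-2}}\Delta_{\tilde g}f,
\]
so that $L_{g}(\phi f)=b\Delta_{g}(\phi f)+s_{g}\phi f$ can be rewritten directly as $\phi^{\frac{Q+2}{Q-2}}L_{\tilde g}f$ in one line, for all $f$ at once. You instead apply Theorem~\ref{s-curva} twice, once with base metric $g$ and conformal factor $\phi\psi$, and once with base metric $\tilde g$ and conformal factor $\psi$, and eliminate the common scalar curvature $s_{\hat g}$; this avoids invoking the SubLaplacian product formula but forces the extra positivity-to-general step at the end. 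Your approach is conceptually cleaner in that it uses Theorem~\ref{s-curva} as a black box and nothing else, while the paper's approach gets the identity for arbitrary $f$ immediately without a density or linearity argument. One small refinement worth making explicit: in your last step, $f+c$ is only positive \emph{near} $p$, whereas the first part established $T\psi=0$ for globally positive $\psi$; to close this, replace $f+c$ near $p$ by a globally positive smooth function agreeing with it in a neighbourhood of $p$ (e.g.\ using a cutoff), and then invoke locality of $T$.
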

As in the locally  conformally flat, CR and qc cases, for a connected compact octonionic  manifold $(M,g,\mathbb{I})$, we have the following  trichotomy: there exists   an OC metric $\tilde{g}$ conformal to $g$  which has  either positive,   negative or  vanishing scalar curvature  everywhere. Denote by $G_{g}(\xi,\cdot)$ the Green function of the  OC Yamabe operator with the pole at $\xi$, i.e. $ L_{g}G_{g}(\xi,\cdot)=\delta_{\xi}, $
where $\delta_{\xi}$ is the Dirac function at the point $\xi.$
On a scalar positive OC manifold, the Green function of the OC Yamabe operator $L_g$ always exists, and
\begin{align}\label{222} \rho_{g}(\xi,\eta)&=\frac{1}{\phi(\xi)\phi(\eta)}
\cdot\frac{C_{Q}}{\|\xi^{-1}\eta\|^{Q-2}},\quad \xi,\eta\in \mathscr{H},
\end{align}
is its singular part, if we identify a neighborhood of $\xi$ with an open set of the octonionic Heisenberg group $\mathscr H$ with the OC metric
$g=\phi^{\frac{4}{Q-2}}g_{0}.$ Here $g_{0}$ is the standard OC metric on the octonionic  Heisenberg group $\mathscr{H}$, $\|\cdot\|$ is the norm on
the octonionic Heisenberg group and $C_{Q}$ is a positive constant  (\ref{146}).
\begin{thm}\label{can-g}
Let $(M,g,\mathbb{I})$ be connected, compact, scalar positive OC manifold, which is not OC equivalent to the standard sphere. Define
$
can(g):=\mathcal{A}_{g}^{2}g,
$
where
\begin{align}
\mathcal{A}_{g}(\xi)=\lim_{\eta\rightarrow\xi}\left|G_{g}(\xi,\eta)-
\rho_{g}(\xi,\eta)\right|^{\frac{1}{Q-2}},
\end{align}
if $g=\phi^{\frac{4}{Q-2}}g_{0}$ on a neighborhood $U$ of $\xi$.  Then, $can(g)$ is well-defined and depends only on the conformal class $\left[M,g,\mathbb I\right].$
\end{thm}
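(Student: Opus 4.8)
The plan is to deduce everything from the conformal transformation law of Corollary~\ref{L-trans} together with the hypoellipticity of $L_g$, in parallel with the CR and qc cases. Throughout, $\delta_\xi$ denotes the Dirac measure against $dV_g$, and I will use that on a scalar positive $M$ one has $\ker L_g=0$, so the Green function $G_g(\xi,\cdot)$ determined by $L_gG_g(\xi,\cdot)=\delta_\xi$ exists and is unique. Two computations are needed: (a) under $\tilde g=\psi^{\frac4{Q-2}}g$ one has $G_{\tilde g}(\xi,\eta)=\psi(\xi)^{-1}\psi(\eta)^{-1}G_g(\xi,\eta)$; and (b) in any Heisenberg chart $U$ around $\xi$ with $g=\phi^{\frac4{Q-2}}g_0$, the function $\rho_g(\xi,\cdot)$ is itself a local fundamental solution, $L_g\rho_g(\xi,\cdot)=\delta_\xi$ in $\mathcal D'(U)$.

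For (a) I would apply Corollary~\ref{L-trans} to $u:=\psi(\xi)^{-1}\psi^{-1}G_g(\xi,\cdot)$: since $\psi u=\psi(\xi)^{-1}G_g(\xi,\cdot)$ we get $L_{\tilde g}u=\psi^{-\frac{Q+2}{Q-2}}L_g(\psi u)=\psi(\xi)^{-\frac{2Q}{Q-2}}\delta_\xi$, which is precisely the Dirac measure of $\tilde g$ at $\xi$ because $dV_{\tilde g}=\psi^{\frac{2Q}{Q-2}}dV_g$; uniqueness of the Green function then gives the stated formula. For (b), recall that the scalar curvature $s_{g_0}$ of the Biquard connection of $g_0$ vanishes on $\mathscr H$, so $L_{g_0}=b\Delta_{g_0}$, and with $C_Q$ normalized as in the earlier section one has $L_{g_0}\big(C_Q\|\xi^{-1}\cdot\|^{-(Q-2)}\big)=\delta_\xi$; feeding this into Corollary~\ref{L-trans} for $g=\phi^{\frac4{Q-2}}g_0$, using $dV_g=\phi^{\frac{2Q}{Q-2}}dV_{g_0}$ and the same exponent cancellation as in (a), yields $L_g\rho_g(\xi,\cdot)=\delta_\xi$ on $U$.

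Granting (a) and (b), the existence of the limit is immediate. Since $Q-2<Q$, both $\rho_g(\xi,\cdot)$ and $G_g(\xi,\cdot)$ lie in $L^1_{\rm loc}(U)$, so $A_g(\xi,\cdot):=G_g(\xi,\cdot)-\rho_g(\xi,\cdot)$ is a distribution on $U$ with $L_gA_g(\xi,\cdot)=0$. Because the horizontal distribution of an OC manifold is bracket generating (of step two), $\Delta_g$ satisfies H\"ormander's condition and $L_g=b\Delta_g+s_g$ is hypoelliptic, so $A_g(\xi,\cdot)\in C^\infty(U)$ and $\lim_{\eta\to\xi}\big|G_g(\xi,\eta)-\rho_g(\xi,\eta)\big|^{\frac1{Q-2}}=|A_g(\xi,\xi)|^{\frac1{Q-2}}$ exists. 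Conformal invariance then follows by combining (a) and (b): writing $\tilde g=(\psi\phi)^{\frac4{Q-2}}g_0$, the definition of $\rho$ gives $\rho_{\tilde g}(\xi,\eta)=\psi(\xi)^{-1}\psi(\eta)^{-1}\rho_g(\xi,\eta)$, so $A_{\tilde g}(\xi,\eta)=\psi(\xi)^{-1}\psi(\eta)^{-1}A_g(\xi,\eta)$, hence $\mathcal A_{\tilde g}(\xi)=\psi(\xi)^{-\frac2{Q-2}}\mathcal A_g(\xi)$ and $can(\tilde g)=\mathcal A_{\tilde g}^2\,\tilde g=\psi^{-\frac4{Q-2}}\mathcal A_g^2\cdot\psi^{\frac4{Q-2}}g=can(g)$.

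The step I expect to be the main obstacle is the remaining part of well-definedness: that $A_g(\xi,\xi)$ does not depend on the chosen Heisenberg chart. Two such charts around $\xi$ are intertwined by an OC conformal map between open subsets of $\mathscr H$, which by the Liouville-type rigidity of the octonionic sphere is the restriction of an element of ${\rm F}_{4(-20)}$ acting conformally on $\mathscr H$. One must check that under such a map the flat fundamental solution $C_Q\|\cdot\|^{-(Q-2)}$ transforms with conformal weight $-1$ in each argument --- this is computation (a) applied to $g_0$ itself, made explicit through the Kor\'anyi-type inversion identity for $\|\cdot\|$ and the evident behaviour under translations, rotations and dilations --- and that this exactly absorbs the change $\phi\mapsto\phi'$, so that $\rho_g$, hence $A_g(\xi,\xi)$, is chart independent. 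The only other non-formal inputs, namely the explicit fundamental solution $C_Q\|\cdot\|^{-(Q-2)}$ with its precise constant and the vanishing of $s_{g_0}$, are supplied by the earlier sections; the rest is exponent bookkeeping on top of Theorem~\ref{s-curva} and Corollary~\ref{L-trans}.
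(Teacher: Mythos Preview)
Your proposal is correct and follows essentially the same strategy as the paper: the transformation law (a) is exactly Proposition~\ref{p3.8}, the chart-independence step via the OC Liouville theorem and the conformal transformation law of the Heisenberg fundamental solution is precisely the paper's identity~(\ref{14}), and the conformal invariance computation $\mathcal A_{\tilde g}=\psi^{-2/(Q-2)}\mathcal A_g$ is carried out in the same way. The one genuine variation is your treatment of the existence of the limit: you argue directly that $L_g\big(G_g(\xi,\cdot)-\rho_g(\xi,\cdot)\big)=0$ on $U$ and invoke hypoellipticity, whereas the paper obtains smoothness of the difference (in fact joint $C^\infty$ on $U\times U$) via the explicit parametrix construction of Proposition~\ref{p3.7}, cutting off the Heisenberg fundamental solution and inverting $L_g$ on the smooth remainder; your route is shorter for the bare existence of the limit, while the paper's gives the stronger joint regularity.
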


As in the locally flat \cite{yau},  CR \cite{paul,Li} and qc \cite{Shi} cases, we propose the following {\it {OC positive mass conjecture}}: {\it
Let $(M,g,\mathbb{I})$ be a compact scalar positive  OC manifold with ${\rm dim}\ M=15$. Then,\\
\noindent 1. For each $\xi\in M$, there exists a local OC diffeomorphism $C_{\xi}$ from a neighborhood of $\xi$ to the octonionic Heisenberg group
$\mathscr{H}$ such that $C_{\xi}(\xi)=\infty$ and  $$\left(C_{\xi}^{-1}\right)^{*}\left(G_{g}(\xi,\cdot)
^{\frac{4}{Q-2}}g\right)
=h^{\frac{4}{Q-2}}g_{0},$$ where  $$h(\eta)=1+A_{g}(\xi)\|\eta\|^{-Q+2}+O(\|\eta\|^{-Q+1}),$$
near $\infty,$ and $g_{0}$ is the standard OC metric on $\mathscr{H}.$
$A_{g}(\xi)$ is called the {OC mass} at the point $\xi$.

\noindent 2. $A_{g}(\xi)$ is nonnegative. It is zero if and only if $(M,g,\mathbb{I})$ is OC equivalent to the standard sphere.}
\vskip 3mm

We  also introduce the connected sum of two OC manifolds and prove that the connected sum of two scalar positive OC manifolds is also scalar positive if the neck is sufficiently long.

In the last section, we recall  definitions of  a convex cocompact subgroup of ${\rm F}_{4(-20)}$ and the Patterson-Sullivan measure. For a discrete  subgroup $\Gamma$ of ${\rm F}_{4(-20)},$ the \emph{limit set} of $\Gamma$ is
\begin{align}\label{limitset}
\Lambda(\Gamma)=\overline{\Gamma{q}}\cap \partial \mathcal U,
\end{align}
for $q$ in the Siegel domain $\mathcal U,$ where $\overline{\Gamma{q}}$ is the closure of the orbit of $q$ under $\Gamma,$ and
\begin{align}\label{Omegag}
\Omega(\Gamma)= \partial\mathcal U\backslash\Lambda(\Gamma)
\end{align}
is the maximal open set where $\Gamma$ acts discontinuously.  It is known that $\Omega(\Gamma)/\Gamma$ is a compact OC manifold  when $\Gamma$
is a convex cocompact subgroup of ${\rm F}_{4(-20)}$.
The \emph{Poincar\'e critical exponent} $\delta(\Gamma)$ of a discrete subgroup $\Gamma$ is defined as
\begin{align*}
\delta(\Gamma)=\inf\left\{s>0;\sum_{\gamma\in\Gamma}e^{-\frac{s}{2}\cdot d(z,\gamma (w))}<\infty\right\},
\end{align*}
where $z$ and $w$ are two points in $\mathcal U$ and $d(\cdot,\cdot)$ is the octonionic hyperbolic distance on $\mathcal U$. $\delta(\Gamma)$
is independent of the particular choice of points $z$ and $w$.
For any convex cocompact subgroup $\Gamma$ of ${\rm F}_{4(-20)}$, there exists a probability measure $\tilde{\mu}_{\Gamma}$ supported on 	its limit set $\Lambda(\Gamma),$ called  the  \emph{Patterson-Sullivan
measure}, such that
\begin{align*}
\gamma^{*}\tilde{\mu}_{\Gamma}=|\gamma'|^{\delta(\Gamma)}\tilde{\mu}_{\Gamma}
\end{align*}
for any $\gamma\in\Gamma$ (cf. \cite{Corlette}), where $|\gamma'|$ is the conformal factor. Define $\mu_\Gamma=\chi\tilde \mu_\Gamma$ by choosing a suitable factor $\chi$ in (\ref{p1}). The conformal factor of $\mu_\Gamma$ is the same as that $\Gamma$ acts on the octonionic Heisenberg group.
We define a $C^{\infty}$ function on $\Omega(\Gamma)$ by
\begin{align}\label{phigamma}
\phi_{\Gamma}(\xi):=\left(\int_{\Lambda(\Gamma)}
G_{0}^{\kappa}(\xi,\zeta)\hbox{d} \mu_{\Gamma}(\zeta)\right)^{\frac{1}{\kappa}},\
\kappa=\frac{2\delta(\Gamma)}{Q-2},
\end{align}where $G_0(\xi,\zeta)$ is the Green function of OC Yamabe operator on the octonionic  Heisenberg group with the pole at $\xi.$ Then \begin{align}\label{ggamma}
g_{\Gamma}:=\phi_{\Gamma}^{\frac{4}{Q-2}}  g_{0},
\end{align}
is  invariant under $\Gamma,$  which is the OC generalization of Nayatani's canonical metric in conformal geometry \cite{Nayatani}. See \cite{Naya,wang1} and \cite{Shi} for CR case and qc case, respectively.
\begin{thm}\label{6.1}
Let $\Gamma$ be a convex cocompact subgroup of ${\rm F}_{4(-20)}$ such that $\Lambda(\Gamma)\neq\{point\}$. Then, the scalar curvature of $(\Omega(\Gamma)/\Gamma,{g}_{\Gamma},\mathbb{I})$ is positive {\rm(}or
negative, or zero{\rm)} everywhere if and only if  $\delta(\Gamma)<10$ {\rm(}or
$\delta(\Gamma)>10$, or $\delta(\Gamma)=10${\rm)}.
\end{thm}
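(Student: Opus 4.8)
The plan is to compute $L_{g_0}\phi_\Gamma$ explicitly and then read off the sign of the scalar curvature from Theorem~\ref{s-curva}. Scalar curvature is a local invariant and the projection $\Omega(\Gamma)\to\Omega(\Gamma)/\Gamma$ is a local diffeomorphism, so it suffices to determine the sign of $s_{g_\Gamma}$ at an arbitrary $\xi\in\Omega(\Gamma)$. Since $\partial\mathcal U$ is the one-point compactification $\mathscr H\cup\{\infty\}$ of the octonionic Heisenberg group, after replacing $\xi$ by $\gamma\xi$ for a suitable $\gamma\in{\rm F}_{4(-20)}$ and using Corollary~\ref{L-trans} we may assume $\xi\in\mathscr H$, and we fix a neighbourhood $U\subset\mathscr H$ of $\xi$ with $\overline U\cap\Lambda(\Gamma)=\emptyset$. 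On $\mathscr H$ the Carnot-Carath\'eodory metric $g_0$ has $s_{g_0}\equiv 0$, so $L_{g_0}=b\,\Delta_{g_0}$ with $\Delta_{g_0}$ the nonnegative SubLaplacian, and $G_0(\xi,\zeta)=C_Q\|\xi^{-1}\zeta\|^{-(Q-2)}$, whence $\Delta_{g_0,\xi}G_0(\xi,\zeta)=0$ for $\xi\neq\zeta$. As $G_0(\cdot,\zeta)$ and its horizontal derivatives are continuous and bounded on $U\times\Lambda(\Gamma)$, the function $F(\xi):=\int_{\Lambda(\Gamma)}G_0(\xi,\zeta)^{\kappa}\,\hbox{d}\mu_\Gamma(\zeta)$ is smooth and strictly positive on $U$, $\phi_\Gamma=F^{1/\kappa}$ there, and all differentiations below are carried out under the integral sign.

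Applying the chain rule --- first to $\xi\mapsto G_0(\xi,\zeta)^{\kappa}$, where the $\Delta_{g_0,\xi}G_0$ term drops, and then to $F^{1/\kappa}$ --- and writing $\nabla$, $|\cdot|$ for the horizontal gradient in $\xi$ and its norm, a direct computation gives
\[
L_{g_0}\phi_\Gamma=b\,(1-\kappa)\,F^{\frac1\kappa-2}\left(F\int_{\Lambda(\Gamma)}G_0^{\kappa-2}|\nabla G_0|^2\,\hbox{d}\mu_\Gamma-\left|\int_{\Lambda(\Gamma)}G_0^{\kappa-1}\nabla G_0\,\hbox{d}\mu_\Gamma\right|^2\right).
\]
By the Cauchy--Schwarz inequality applied to the scalar $G_0^{\kappa/2}$ and the vector-valued $G_0^{\kappa/2-1}\nabla G_0$ against the probability measure $\mu_\Gamma$, the expression in parentheses is $\geq 0$. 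Since $F>0$, $b>0$ and $1-\kappa=1-\frac{2\delta(\Gamma)}{Q-2}=\frac{10-\delta(\Gamma)}{10}$ (because $Q=22$), Theorem~\ref{s-curva} gives at once: if $\delta(\Gamma)=10$ then $s_{g_\Gamma}\equiv 0$; if $\delta(\Gamma)<10$ (resp.\ $\delta(\Gamma)>10$) then $s_{g_\Gamma}\geq 0$ (resp.\ $s_{g_\Gamma}\leq 0$) everywhere on $\Omega(\Gamma)/\Gamma$.

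To obtain the trichotomy one must upgrade these to strict inequalities at every point, i.e.\ show that the Cauchy--Schwarz bracket is strictly positive throughout $\Omega(\Gamma)$ whenever $\Lambda(\Gamma)\neq\{point\}$. Equality in Cauchy--Schwarz at $\xi$ would force $\nabla G_0(\xi,\zeta)=\lambda\,G_0(\xi,\zeta)$ for a fixed horizontal vector $\lambda$ and $\mu_\Gamma$-almost every $\zeta$, that is, $\zeta\mapsto\nabla\log G_0(\xi,\zeta)$ would be constant on ${\rm supp}\,\mu_\Gamma=\Lambda(\Gamma)$. This condition is unchanged under OC conformal changes of $\xi$ (they modify $\log G_0(\xi,\cdot)$ only by a term depending on $\xi$ alone), so we may normalize $\xi$ to the origin of $\mathscr H$; the explicit octonionic group law then forces $\Lambda(\Gamma)$ into a fixed proper real-analytic subset of $\partial\mathcal U$. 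The same condition holds simultaneously at every point of the orbit $\Gamma\xi$, by $\Gamma$-invariance of $\Lambda(\Gamma)$ and the conformal invariance just noted, and $\overline{\Gamma\xi}\supseteq\Lambda(\Gamma)$; letting points of the orbit tend to $\Lambda(\Gamma)$ one concludes that $\Lambda(\Gamma)$ is a single point, a contradiction. Hence the bracket never vanishes, so $s_{g_\Gamma}>0$ (resp.\ $<0$) everywhere when $\delta(\Gamma)<10$ (resp.\ $>10$); together with the case $\delta(\Gamma)=10$ and the mutual exclusivity of the three alternatives, the three stated equivalences follow.

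I expect this last step to be the main obstacle. In the flat (Euclidean) model $\zeta\mapsto\nabla\log G_0(\xi,\zeta)$ is an injective inversion-type map, so Cauchy--Schwarz is strict as soon as $\Lambda(\Gamma)$ has two points; but on the octonionic Heisenberg group the horizontal gradient annihilates the central directions through $\xi$, so its level sets are nontrivial, and excluding $\Lambda(\Gamma)$ from such a level set requires both an explicit calculation with the non-associative group law and a Zariski-density type property of limit sets of convex cocompact subgroups of ${\rm F}_{4(-20)}$. The remaining points --- smoothness and positivity of $\phi_\Gamma$ on $\Omega(\Gamma)$, the vanishing of $s_{g_0}$ on $\mathscr H$, the $\Gamma$-invariance of $g_\Gamma$, and its descent to the compact quotient --- are routine given the results recalled above.
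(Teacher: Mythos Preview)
The paper itself does not give a proof of Theorem~\ref{6.1}; it writes ``The proof of Theorem~\ref{6.1} is similar to the CR case \cite[p.~265]{wang1} and qc case \cite[p.~302]{Shi}, we omit details.'' Your approach---compute $L_{g_0}\phi_\Gamma$ by differentiating under the integral, obtain the factor $b(1-\kappa)F^{1/\kappa-2}$ times a Cauchy--Schwarz bracket, and read off the sign from $1-\kappa=(10-\delta(\Gamma))/10$---is exactly the Nayatani-type argument used in those references, so in that sense your route matches the paper's. Your displayed formula for $L_{g_0}\phi_\Gamma$ is correct (a direct chain-rule check using $\Delta_{g_0}G_0(\cdot,\zeta)=0$ on $\Omega(\Gamma)$), and the non-strict trichotomy follows immediately. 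One small inaccuracy: $\mu_\Gamma=\chi\tilde\mu_\Gamma$ need not be a probability measure, but Cauchy--Schwarz requires only a positive measure, so this is harmless.

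Where your write-up is genuinely incomplete is the strictness step, and you are right to flag it. Your worry is well founded: on the octonionic (and already on the complex or quaternionic) Heisenberg group the map $\zeta\mapsto\nabla_\xi\log G_0(\xi,\zeta)$ is \emph{not} injective. Indeed, using left invariance and $G_0(\xi,\zeta)=G_0(\zeta,\xi)$ one finds that at $\xi=0$ this map equals (up to a constant) $\eta\mapsto \|\eta\|^{-4}\bigl(|y|^2 y+\sum_\beta s_\beta E^\beta y\bigr)$ with $\eta=\zeta^{-1}=(y,s)$, which is essentially the horizontal part of the group inversion $R$; its fibres are seven-dimensional (the $R$-images of vertical fibres $\{y_0\}\times\mathbb R^7$), so Cauchy--Schwarz equality at a single point does not immediately force $\Lambda(\Gamma)$ to be a point. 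Your proposed fix---propagate the equality along the $\Gamma$-orbit and pass to the limit set---is suggestive but, as stated, does not close: you have not explained what survives in the limit $\gamma\xi\to\zeta\in\Lambda(\Gamma)$, nor why the resulting constraints single out a point.

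In the references the paper cites, the strictness is obtained by exploiting that these fibres are, after an element of the ambient isometry group, boundaries of totally geodesic real hyperbolic planes (the centre $\{0\}\times{\rm Im}\,\mathbb O$ together with $\infty$), and that a non-elementary limit set cannot sit inside such a boundary without forcing $\Gamma$ to stabilize it; alternatively, one shows directly that if the bracket vanished on an open set then $\log\bigl(G_0(\cdot,\zeta_1)/G_0(\cdot,\zeta_2)\bigr)$ would have zero horizontal gradient on that set for all $\zeta_1,\zeta_2\in\Lambda(\Gamma)$, hence be locally constant by bracket-generation, which forces $\zeta_1=\zeta_2$. To finish your proof in the spirit of \cite{wang1,Shi} you should either reproduce that fibre/limit-set argument in the octonionic setting, or combine the non-strict inequality with the trichotomy of Proposition~\ref{2.1}: if $\delta(\Gamma)<10$ then $s_{g_\Gamma}\ge 0$ and is not identically zero (by the open-set argument just mentioned), hence the first eigenvalue of $L_{g_\Gamma}$ is strictly positive and the manifold is scalar positive, and then argue back to pointwise positivity of $s_{g_\Gamma}$ via the explicit formula. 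Either way, the missing content is localized exactly where you say it is.
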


\section{Preliminaries of octonions, Jordan algebra, ${\rm F}_{4(-20)},$ the octionionic hyperbolic space and the octonionic Heisenberg group}
\subsection{Octonions and Jordan algebra}
In this paper we denote by   $a,b,c,\cdots$ numbers in $\mathbb{Z}_{8}:=\{0,\cdots,7\}$ and by  $\alpha,\beta,\gamma,\cdots$ numbers in $\mathbb{Z}_{8+}:=\{1,\cdots,7\}.$
The octonion algebra has a basis $\{e_a\}$  satisfying the relation $e_ae_{0}=e_{0}e_a, a \in \mathbb{Z}_{8}$, and
\begin{align}\label{2.1'}
e_{\alpha}e_{\beta}=-\delta_{\alpha\beta}+\epsilon_{\alpha\beta\gamma}e_{\gamma},
\end{align}
$\alpha,\beta\in\mathbb{Z}_{8+}.$ The constants $\epsilon_{\alpha\beta\gamma}$ in (\ref{2.1'}) are completely antisymmetric in $\alpha,\beta,\gamma,$ and equal the value $+1$ for $(\alpha,\beta,\gamma)\in\Omega$ (cf. e.g. \cite{Wang H}), where
\begin{align}\label{2.2}
\Omega:=\{(1, 2, 3),(2, 4, 6),(4, 3, 5),(3, 6, 7),(6, 5, 1),(5, 7, 2),(7, 1, 4)\}.
\end{align}
$\epsilon_{\alpha\beta\gamma}$ is nonzero only for  $(\alpha,\beta,\gamma)$ to be a
permutation of a triple  in $\Omega.$ We may choose   $\Omega$ differently (cf. e.g. \cite{Baez}).   By (\ref{2.2}), it is direct to see that, for fixed $(\alpha,\beta),$ $\epsilon_{\alpha\beta\gamma}$ is nonvanishing for only one $\gamma.$ So  $\epsilon_{\alpha\beta\gamma}e_{\gamma}$ in (\ref{2.1'}) is the same as  $\sum_{\gamma=1}^{7}\epsilon_{\alpha\beta\gamma}e_{\gamma}.$

The octonion  algebra $\mathbb{O}$ is neither commutative nor associative. For $x,y,z\in\mathbb{O},$ define an {\it associator}: $\{x,y,z\}:=(xy)z-x(yz).$ Besides, the octonions obey some weak associative laws, such as the so-called
{\it Moufang identities} (cf. e.g. (2.5) in \cite{Wang H}):
\begin{align*}
(uvu)x = u(v(ux)),\quad x(uvu) = ((xu)v)u,\quad u(xy)u = (ux)(yu),
\end{align*}
for any $u,v,x,y\in\mathbb{O}.$ In particular, we have
\begin{align*}
uvu = (uv)u = u(vu).
\end{align*}
Namely,  the octonion  algebra
is {\it alternative}, i.e. $\{u,v,u\}=\{u,u,v\}=\{v,u,u\}=0$ for any $u,v\in\mathbb{O}$.
\begin{prop}\label{abc}{\rm (cf. }\cite[Proposition 2.1]{Wang H} {\rm and references therein)}
For any $a,b,c,d\in\mathbb{Z}_8,$ we have
\begin{align}\label{mnps}
\{e_a,e_b,e_c\}:=(e_ae_b)e_c-e_a(e_be_c)=2\epsilon_{abcd}e_d,
\end{align}
where $\epsilon_{abcd}$ is totally antisymmetric, and equals to $1$ for $(a, b, c, d)\in\Lambda,$ where
\begin{align*}
\Lambda= \{(5, 4, 6, 7),(7, 3, 5, 1),(1, 6, 7, 2),(2, 5, 1, 4),(4, 7, 2, 3), (3, 1, 4, 6), (6, 2, 5, 3)\}.
\end{align*}
$\epsilon_{abcd}$ is nonzero only when $(a, b, c, d)$ is the permutation of a quadruple in $\Lambda.$
\end{prop}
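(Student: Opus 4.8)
The plan is to reduce the identity to a statement about the structure constants $\epsilon_{\alpha\beta\gamma}$ of (\ref{2.1'}) and ultimately to a single contraction identity among them. First I would dispose of the degenerate cases. Since $e_0$ is a two-sided unit ($e_ae_0=e_0e_a$), any associator with an index equal to $0$ vanishes; and because $\mathbb O$ is alternative, the associator $\{\cdot,\cdot,\cdot\}$ is an alternating function of its three arguments, so $\{e_a,e_b,e_c\}=0$ as soon as two of $a,b,c$ coincide. This matches the total antisymmetry of $\epsilon_{abcd}$ on the right-hand side. Hence it suffices to treat three \emph{distinct} imaginary indices $a,b,c=\alpha,\beta,\gamma\in\mathbb Z_{8+}$.

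For such indices I would expand both products by (\ref{2.1'}), writing $e_\alpha e_\beta=\sum_\mu\epsilon_{\alpha\beta\mu}e_\mu$ (the Kronecker term drops since $\alpha\neq\beta$). A direct computation gives
\begin{align*}
(e_\alpha e_\beta)e_\gamma&=-\epsilon_{\alpha\beta\gamma}+\sum_{\mu,\nu}\epsilon_{\alpha\beta\mu}\epsilon_{\mu\gamma\nu}e_\nu,\\
e_\alpha(e_\beta e_\gamma)&=-\epsilon_{\beta\gamma\alpha}+\sum_{\mu,\nu}\epsilon_{\beta\gamma\mu}\epsilon_{\alpha\mu\nu}e_\nu.
\end{align*}
By the cyclic symmetry $\epsilon_{\alpha\beta\gamma}=\epsilon_{\beta\gamma\alpha}$ the two scalar parts cancel, so the associator is purely imaginary and the coefficient of $e_\nu$ equals $\sum_\mu(\epsilon_{\alpha\beta\mu}\epsilon_{\mu\gamma\nu}-\epsilon_{\beta\gamma\mu}\epsilon_{\alpha\mu\nu})$.

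The crux is the octonionic analogue of the Levi-Civita contraction identity, namely
\begin{align*}
\sum_{\mu=1}^{7}\epsilon_{\alpha\beta\mu}\epsilon_{\gamma\delta\mu}=\delta_{\alpha\gamma}\delta_{\beta\delta}-\delta_{\alpha\delta}\delta_{\beta\gamma}+\epsilon_{\alpha\beta\gamma\delta},
\end{align*}
which one may take as the definition of the totally antisymmetric four-index tensor $\epsilon_{\alpha\beta\gamma\delta}$. After re-cycling $\mu$ into the last slot of each factor and applying this identity, the $\delta$-products all vanish because $\alpha,\beta,\gamma$ are distinct, leaving $\epsilon_{\alpha\beta\gamma\nu}-\epsilon_{\beta\gamma\nu\alpha}$; since a $4$-cycle is an odd permutation, $\epsilon_{\beta\gamma\nu\alpha}=-\epsilon_{\alpha\beta\gamma\nu}$, so the coefficient of $e_\nu$ becomes $2\epsilon_{\alpha\beta\gamma\nu}$. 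As for fixed distinct $\alpha,\beta,\gamma$ at most one $\nu$ makes $\epsilon_{\alpha\beta\gamma\nu}$ nonzero, this is exactly $2\sum_\nu\epsilon_{\alpha\beta\gamma\nu}e_\nu=2\epsilon_{abcd}e_d$, as claimed.

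The main obstacle is establishing this contraction identity and pinning down the exact set $\Lambda$: unlike in three dimensions there is a genuine extra term $\epsilon_{\alpha\beta\gamma\delta}$, and one must check that its nonzero ordered quadruples, normalized to $+1$, are precisely those listed. I would do this by recognizing $\epsilon_{\alpha\beta\gamma\delta}$ as the Hodge dual of the three-form $\epsilon_{\alpha\beta\gamma}$ on the $7$-dimensional space of imaginary octonions: each quadruple in $\Lambda$ is the complement in $\mathbb Z_{8+}$ of a triple in $\Omega$ from (\ref{2.2}) (for instance $(5,4,6,7)$ complements $(1,2,3)$), the orientation being fixed so that the contraction identity holds. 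This can be verified by a finite case check against the multiplication table (\ref{2.1'})--(\ref{2.2}), or quoted from \cite{Wang H}.
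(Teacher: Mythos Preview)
The paper does not actually prove this proposition; it merely cites it from \cite[Proposition~2.1]{Wang H}. Your argument is the standard one and is correct: dispose of the cases involving $e_0$ or repeated indices by the unit property and alternativity, expand both products via (\ref{2.1'}), cancel the scalar parts by cyclicity of $\epsilon_{\alpha\beta\gamma}$, and reduce the imaginary part to the contraction identity
\[
\sum_{\mu}\epsilon_{\alpha\beta\mu}\epsilon_{\gamma\delta\mu}
=\delta_{\alpha\gamma}\delta_{\beta\delta}-\delta_{\alpha\delta}\delta_{\beta\gamma}
+\epsilon_{\alpha\beta\gamma\delta},
\]
after which the sign bookkeeping you give (the $4$-cycle is odd, so the two surviving terms add) yields the factor $2$.

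The one point to watch is logical order. If you \emph{define} $\epsilon_{\alpha\beta\gamma\delta}$ by the contraction identity, its antisymmetry in $(\alpha,\beta)$, in $(\gamma,\delta)$, and the symmetry under swapping the two pairs are immediate, but antisymmetry under, say, $\beta\leftrightarrow\gamma$ is a genuine fact about the octonion structure constants and does not come for free. You flag this correctly as the ``main obstacle'' and propose the right fix: either verify it by the finite case check against (\ref{2.2}), or identify $\epsilon_{\alpha\beta\gamma\delta}$ with the Hodge dual on $\mathrm{Im}\,\mathbb O$ of the associative $3$-form $\epsilon_{\alpha\beta\gamma}$, which makes total antisymmetry automatic and exhibits each quadruple in $\Lambda$ as the complement in $\mathbb Z_{8+}$ of a triple in $\Omega$ with the appropriate orientation. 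Either route closes the argument.
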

Denote by $M(n,\mathbb{O})$ all $(n\times n)$ matrices with entries in $\mathbb{O}$ and  $I_{n}={\rm diag}(1,\cdots,1)\in M(n,\mathbb{O})$  the unit matrix.
For $A\in M(n,\mathbb{O})$, $A^{t}$ denotes the transposed matrix of $A.$ The {\it Jordan algebra}
\begin{align*}
\mathcal{J}:=\{X\in M(3,\mathbb{O})|D_1 X=\bar{X}^{t}D_1\},
\end{align*}
has ${\rm F}_{4(-20)}$ as its automorphism group (cf. \cite{Allcock,Mostow,Parker}), where \begin{equation}\label{d1}
D_{1}=\begin{pmatrix}0 &0 &1\\0 &1&0\\1&0&0 \end{pmatrix}.
\end{equation}
\begin{prop}{\rm(\cite[Lemma 14.66]{Harvey})}
{\rm Spin(7)} is generated by   $\{L_\mu:\mu\in S^6\subset{\rm Im}\ \mathbb{O}\},$ where $L_\mu x=\mu x,$ for $x\in\mathbb O.$
\end{prop}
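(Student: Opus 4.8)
The plan is to realize the concrete group $G:=\langle L_\mu : \mu\in S^6\rangle\subset\mathrm{End}(\mathbb{O})$ as the image of the real spinor representation of the Clifford algebra of $(\mathrm{Im}\,\mathbb{O},-|\cdot|^2)\cong\mathbb{R}^7$ and then to identify it with $\mathrm{Spin}(7)$. First I would record that each $L_\mu$ lies in $\mathrm{SO}(8)=\mathrm{SO}(\mathbb{O})$: since $\mathbb{O}$ is a composition algebra, $|\mu x|=|\mu|\,|x|=|x|$, so $L_\mu$ is orthogonal; and by alternativity $L_\mu^2 x=\mu(\mu x)=\mu^2 x=-x$, i.e. $L_\mu^2=-\mathrm{Id}$, so $L_\mu$ is a complex structure and has determinant $1$.

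Next comes the central computation. For $\mu,\nu\in\mathrm{Im}\,\mathbb{O}$ the total antisymmetry of the associator $[\mu,\nu,x]=(\mu\nu)x-\mu(\nu x)$ (the alternative law, which is the content of Proposition \ref{abc}) gives, upon adding the two orderings,
\[
L_\mu L_\nu+L_\nu L_\mu=(\mu\nu+\nu\mu)\,\mathrm{Id}=-2\langle\mu,\nu\rangle\,\mathrm{Id}.
\]
Hence $\mu\mapsto L_\mu$ satisfies the Clifford relations and extends to an algebra homomorphism $\rho:\mathrm{Cl}_{0,7}\to\mathrm{End}(\mathbb{O})$. Since $\mathrm{Cl}_{0,7}\cong\mathrm{Mat}(8,\mathbb{R})\oplus\mathrm{Mat}(8,\mathbb{R})$ and $\mathbb{O}\cong\mathbb{R}^8$ is an irreducible $8$-dimensional module, $\rho$ is (up to the standard identifications) the real spinor representation, factoring through one matrix factor; in particular $\rho$ is onto $\mathrm{Mat}(8,\mathbb{R})$, and its restriction to the even subalgebra $\mathrm{Cl}^0_{0,7}\cong\mathrm{Mat}(8,\mathbb{R})$ is an isomorphism of simple algebras of equal dimension, hence injective.

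With this in hand the statement reduces to group theory inside the Clifford algebra. The unit vectors of $\mathbb{R}^7\subset\mathrm{Cl}_{0,7}$ are exactly $S^6\subset\mathrm{Im}\,\mathbb{O}$, so $\mathrm{Pin}(7)$—generated by unit vectors—has image $\rho(\mathrm{Pin}(7))=G$, while $\mathrm{Spin}(7)\subset\mathrm{Cl}^0_{0,7}$ is mapped faithfully by $\rho$, giving $\rho(\mathrm{Spin}(7))\cong\mathrm{Spin}(7)$, a group of the correct dimension $21$ sitting inside $\mathrm{SO}(8)$. It therefore remains only to prove $\rho(\mathrm{Pin}(7))=\rho(\mathrm{Spin}(7))$, i.e. that the odd generators $L_\mu$ (single left multiplications, a priori in $\mathrm{Pin}$ but not $\mathrm{Spin}$) are already captured by the connected group.

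I expect this last reconciliation of parity to be the main obstacle, and I would handle it through the volume element $\omega=e_1e_2\cdots e_7\in\mathrm{Cl}_{0,7}$. Because $n=7$ is odd, $\omega$ is central with $\omega^2=+1$, so on the irreducible module $\mathbb{O}$ it acts as a real scalar $\rho(\omega)=\pm\mathrm{Id}$; moreover $-\mathrm{Id}=\rho(-1)$ with $-1=(e_1e_2)^2\in\mathrm{Spin}(7)$, so $\rho(\omega)\in\rho(\mathrm{Spin}(7))$. Since $\omega$ is odd, $\mathrm{Pin}(7)=\mathrm{Spin}(7)\sqcup\omega\,\mathrm{Spin}(7)$, whence
\[
\rho(\mathrm{Pin}(7))=\rho(\mathrm{Spin}(7))\cup\rho(\omega)\,\rho(\mathrm{Spin}(7))=\rho(\mathrm{Spin}(7)).
\]
Equivalently, each single $L_\mu=\rho(\mu)=\pm\rho(\mu\omega)$, where $\mu\omega$ is an even product of $8$ unit vectors and hence lies in $\mathrm{Spin}(7)$ (the sign being absorbed by $-\mathrm{Id}\in\rho(\mathrm{Spin}(7))$). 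Thus $G=\langle L_\mu\rangle=\rho(\mathrm{Spin}(7))\cong\mathrm{Spin}(7)$, as claimed. A short bookkeeping check that the Clifford normalizations genuinely place $\mu\omega$ and $-1$ in the spin group, and that $\rho(\mathrm{Spin}(7))\subset\mathrm{SO}(8)$ is the standard embedding, would complete the argument.
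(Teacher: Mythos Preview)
Your argument is correct and is essentially the standard Clifford-algebraic proof (which is in fact how Harvey proves it in \cite{Harvey}); the paper itself gives no proof of this proposition, only the citation. The key steps --- the Clifford relation $L_\mu L_\nu+L_\nu L_\mu=-2\langle\mu,\nu\rangle\,\mathrm{Id}$ from alternativity, the identification of $\mathbb{O}$ as an irreducible $\mathrm{Cl}_{0,7}$-module, faithfulness of $\rho$ on $\mathrm{Cl}^0_{0,7}\cong\mathrm{Mat}(8,\mathbb{R})$, and the collapse $\rho(\mathrm{Pin}(7))=\rho(\mathrm{Spin}(7))$ via the central odd volume element $\omega$ with $\rho(\omega)=\pm\mathrm{Id}$ --- are all sound. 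The only cosmetic point is that the final sentence about checking ``the standard embedding'' into $\mathrm{SO}(8)$ is not needed for the statement as written; once you have $G=\rho(\mathrm{Spin}(7))\cong\mathrm{Spin}(7)$ you are done.
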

It is characterized as the subgroup of ${\rm SO}(8)$ which conjugates $\mathbb{R}^7=\{L_\mu;\mu\in{\rm Im}\ \mathbb{O}\}$ to itself (cf. \cite{biquard}). We have a decomposition of $\mathfrak{so}(8)$ into irreducible $\mathfrak{so}(7)$ modules in the form \begin{align}\label{so7}
\wedge^2\mathbb{O}=\mathfrak{so}(8)=\mathfrak{so}(7)\oplus\mathbb{R}^7.\end{align}

In the sequel we will use the  Einstein convention of repeated indices. If we identify $\mathbb{O}$ with $\mathbb{R}^8$,  the left multiplication by $e_\beta$ is a linear transformation on $\mathbb{R}^8,$ given by a $(8\times8)$-matrix $I_\beta=L_{e_\beta}.$ Namely, by identifying ${x}=  x_ae_a$ with ${x}=(x_0,\cdots,x_7)^t,$ we have
\begin{align*}
e_\beta {x}=(I_\beta x)_ae_a.
\end{align*}
$(I_\beta x)_a$ is the $a$-th entry of the vector  in $\mathbb{R}^8$ corresponding to $e_\beta x.$ $I_1,\cdots,I_7$ do not satisfy the commutating  relation (\ref{2.1'}) of octonions  because of the non-associativity of $\mathbb O.$
\begin{prop}{\rm(}cf. \cite[Proposition 3.1, 3.2]{Wang H}{\rm)}
Suppose $e_a e_b=e_\beta $, $a,b\in\mathbb{Z}_{8+} .$ Then we have
\begin{align}\label{Nij}
I_a I_b=I_\beta-N^{ab},
\end{align}
where $N^{ab}$  are $(8\times 8)$-matrices with $\left(N^{ab}\right)_{dc}=2\epsilon_{abcd}.$
\end{prop}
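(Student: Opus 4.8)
The plan is to identify the matrix $N^{ab}$ with the associator operator $x\mapsto\{e_a,e_b,x\}$ and then read off its entries from Proposition \ref{abc}. First I would unwind the definitions. Since $I_a=L_{e_a}$ denotes left multiplication by $e_a$, the composition $I_aI_b$ acts on $x\in\mathbb{O}\cong\mathbb{R}^8$ by $(I_aI_b)x=e_a(e_bx)$, whereas $I_\beta x=e_\beta x=(e_ae_b)x$ because $e_\beta=e_ae_b$ by hypothesis. Subtracting and recalling the associator $\{x,y,z\}=(xy)z-x(yz)$, I would record the operator identity
\[
(I_\beta-I_aI_b)x=(e_ae_b)x-e_a(e_bx)=\{e_a,e_b,x\},
\]
valid for every $x\in\mathbb{O}$. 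Thus the asserted relation $I_aI_b=I_\beta-N^{ab}$ is equivalent to showing that $N^{ab}$ is exactly the matrix of the linear map $x\mapsto\{e_a,e_b,x\}$ in the basis $\{e_a\}$.

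Next I would evaluate this map on the basis using the trilinearity of the associator. Writing $x=x_ce_c$ (summation over $c\in\mathbb{Z}_8$) and invoking Proposition \ref{abc}, which gives $\{e_a,e_b,e_c\}=2\epsilon_{abcd}e_d$ with $\epsilon_{abcd}$ totally antisymmetric, I obtain
\[
\{e_a,e_b,x\}=x_c\{e_a,e_b,e_c\}=2\epsilon_{abcd}x_ce_d,
\]
so that the $d$-th component of $N^{ab}x$ equals $2\epsilon_{abcd}x_c$. Comparing with $(N^{ab}x)_d=(N^{ab})_{dc}x_c$ yields precisely $(N^{ab})_{dc}=2\epsilon_{abcd}$, completing the argument.

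The one point requiring care is the index $c=0$, i.e. the behaviour of the map on the real component of $x$ along $e_0=1$. This causes no trouble: $\{e_a,e_b,1\}=(e_ae_b)-(e_ae_b)=0$, and correspondingly $\epsilon_{abcd}=0$ whenever any index vanishes, since the quadruples in $\Lambda$ all lie in $\mathbb{Z}_{8+}$. Beyond this bookkeeping there is no genuine obstacle, as the statement is an immediate reformulation of Proposition \ref{abc}; the only (mild) risk is tracking the order of the factors in the composition $I_aI_b$ and fixing the sign convention in the associator consistently, which is exactly what the operator identity in the first paragraph pins down.
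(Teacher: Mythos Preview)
Your argument is correct and follows essentially the same route as the paper: both identify $(I_\beta-I_aI_b)x=(e_ae_b)x-e_a(e_bx)$ with the associator and then read off the matrix entries from Proposition~\ref{abc}. The only cosmetic difference is that you name the associator explicitly and handle the index $c=0$ separately, whereas the paper expands $e_a(e_be_c)x_c$ directly; the content is identical.
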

\begin{proof}
Note that $(e_a e_b)x=e_\beta x=(I_\beta x)_ce_c,$ and
$$e_a (e_b x)=e_a\left((I_b x)_ce_c\right)=\left(I_bx\right)_c\cdot\left(I_a\right)_{dc}e_d=(I_aI_b x)_de_d.$$
We find that
\begin{equation*}\begin{aligned}
(I_aI_b x)_de_d=&e_a(e_b e_c)x_c=(e_a e_b) e_cx_c-2\epsilon_{abcd}e_d x_c\\=&e_\beta \left(x_c e_c\right)-2\epsilon_{abcd}e_dx_c=\left(I_\beta x\right)_de_d -\left(N^{ab}x\right)_d e_d.
\end{aligned}\end{equation*}
Then (\ref{Nij}) follows.
\end{proof}
Set \begin{equation}\label{o2.12}
E^{\beta}=\begin{pmatrix} 0 & -\nu_{\beta} \\ \nu^{t}_{\beta} & \varepsilon^{\beta} \end{pmatrix},\quad \beta=1,\cdots,7.
\end{equation}
Here $\varepsilon^{\beta}$ are $(7\times7)$-matrices with
\begin{align*}
\varepsilon^{\beta}_{\alpha\gamma}=\epsilon_{\gamma\alpha\beta},\quad \nu_{\beta}=(\delta_{\beta1},\cdots,\delta_{\beta7})\in\mathbb{R}^{7}.
\end{align*}
Note that $e_\beta x_\gamma e_\gamma=\epsilon_{\beta\gamma\alpha}x_\gamma e_\alpha-x_\beta=\epsilon_{\gamma\alpha\beta}x_\gamma e_\alpha-x_\beta=\left(\varepsilon_{\alpha\gamma}^\beta x_\gamma\right) e_\alpha-x_\beta.$ So $(I_\beta)_{ba}$ is the $(b,a)$-th entry of $E^\beta$ given by  (\ref{o2.12}).
 \begin{prop}\label{p2.4} $E^{\beta}$'s are antisymmetric matrices    satisfying
{\rm(1)} $\left(E^\beta\right)^2=-I_{8\times8};$
{\rm(2)} $E^\alpha E^\beta=-E^\beta E^\alpha,$ for $\alpha\neq\beta\in\mathbb Z_{8+};$
 \end{prop}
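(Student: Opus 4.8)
The plan is to use the observation, recorded just before the statement, that $E^\beta$ is the matrix of left multiplication $L_{e_\beta}\colon x\mapsto e_\beta x$ on $\mathbb O\cong\mathbb R^8$ in the basis $\{e_a\}$, and to reduce each of the three assertions to an elementary identity in $\mathbb O$.

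The antisymmetry of $E^\beta$ I would read directly off (\ref{o2.12}): the transpose of $E^\beta$ has off-diagonal blocks $\nu_\beta$ and $-\nu_\beta^t$ and diagonal block $(\varepsilon^\beta)^t$, so $E^\beta$ is antisymmetric exactly when $(\varepsilon^\beta)^t=-\varepsilon^\beta$, i.e. $\varepsilon^\beta_{\gamma\alpha}=\epsilon_{\alpha\gamma\beta}=-\epsilon_{\gamma\alpha\beta}=-\varepsilon^\beta_{\alpha\gamma}$, which is immediate from the total antisymmetry of $\epsilon_{\alpha\beta\gamma}$. (Equivalently, $L_{e_\beta}$ is skew for the standard inner product since $\bar e_\beta=-e_\beta$.)

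For (1) I would use that $\mathbb O$ is alternative, as already invoked above: $\{e_\beta,e_\beta,x\}=0$ gives $(E^\beta)^2x=e_\beta(e_\beta x)=(e_\beta e_\beta)x=-x$ for every $x\in\mathbb O$, hence $(E^\beta)^2=-I_{8\times8}$. For (2), the key point is that in an alternative algebra the associator $\{u,v,w\}$ is alternating in its three arguments, so $\{e_\alpha,e_\beta,x\}+\{e_\beta,e_\alpha,x\}=0$; combined with $e_\alpha e_\beta+e_\beta e_\alpha=-2\delta_{\alpha\beta}$ from (\ref{2.1'}), this gives for $\alpha\neq\beta$
\[
E^\alpha E^\beta x+E^\beta E^\alpha x=(e_\alpha e_\beta+e_\beta e_\alpha)x-\{e_\alpha,e_\beta,x\}-\{e_\beta,e_\alpha,x\}=0 .
\]
Alternatively, (2) can be obtained purely from (\ref{Nij}): its proof in fact shows $I_aI_b=L_{e_ae_b}-N^{ab}$ for all $a,b$, so that $I_bI_a=L_{e_be_a}-N^{ba}=-L_{e_ae_b}+N^{ab}$ for $a\neq b$, using $e_be_a=-e_ae_b$ and $N^{ba}=-N^{ab}$ (immediate from $(N^{ab})_{dc}=2\epsilon_{abcd}$); adding the two relations makes everything cancel.

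I do not anticipate a real obstacle here. The only step that needs a word of justification is the alternating property of the octonionic associator used in (2), which is standard for alternative algebras and can be quoted from the references cited around (\ref{Nij}) and (\ref{mnps}). If one prefers to avoid it and compute (1) and (2) directly from the block form (\ref{o2.12}), the single nontrivial ingredient becomes the quadratic identity $\sum_\mu\epsilon_{\mu\alpha\beta}\,\epsilon_{\gamma\mu\beta}=-\delta_{\alpha\gamma}+\delta_{\alpha\beta}\delta_{\gamma\beta}$ for the structure constants (a consequence of (\ref{mnps}) and (\ref{2.1'})), and carrying that out would be the bulk of the work.
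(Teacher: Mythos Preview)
Your argument is correct. For part (1) you reproduce exactly the paper's computation $e_\beta(e_\beta x)=(e_\beta e_\beta)x=-x$. For part (2) the paper's route differs slightly from both of yours: rather than expanding the anticommutator directly, it observes from (\ref{Nij}) that $E^\alpha E^\beta=E^\gamma-N^{\alpha\beta}$ is a sum of two antisymmetric matrices (using the antisymmetry of $N^{\alpha\beta}$ in its \emph{matrix} indices $(c,d)$, not in its superscripts), so $E^\alpha E^\beta$ is itself antisymmetric, and then $(E^\alpha E^\beta)^t=(E^\beta)^t(E^\alpha)^t=E^\beta E^\alpha$ forces anticommutativity. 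Your associator argument and your alternative via $N^{ba}=-N^{ab}$ exploit the same total antisymmetry of $\epsilon_{abcd}$ in different index pairs and are equally short; the associator version has the minor advantage of not invoking (\ref{Nij}) or $N^{ab}$ at all, while the paper's transpose trick reuses the antisymmetry of $E^\gamma$ just established.
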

\begin{proof}
The proposition  is  proved in \cite[Proposition 3.1, 3.3]{Wang H}.\\
(1)  $-x=(e_\beta e_\beta)x=e_\beta (e_\beta x)=e_\beta (E^\beta x)_ae_a=((E^\beta)^2x)_ae_a,$
i.e. $\left(E^\beta\right)^2=-I_{8\times8}.$\\
(2) It follows form (\ref{Nij}) that $E^\alpha E^\beta$ are also antisymmetric by the antisymmetry of $E^\beta$ and $N.$
\end{proof}

\subsection{${\rm F}_{4(-20)}$ and the octonionic hyperbolic space}
Let us recall basic facts  of octonionic hyperbolic space (cf. \cite{Allcock,Parker,Platis}).
Define \begin{equation*}
\mathbb{O}^3_0=\left\{\textbf{v}=\begin{pmatrix}y\\x\\z \end{pmatrix}:x,y,z\ {\rm all\ lie\ in\ some\ associative\ subalgebra\ of}\ \mathbb{O}\right\}.
\end{equation*}
$\textbf{v}\sim \textbf{w}$ if $\textbf{v}=\textbf{w}\lambda$ for some $\lambda$ in an associative subalgebra of $\mathbb{O}$ containing the entries $x,y,z$ of $\textbf{v}.$ The map from $\mathbb{O}^3_0$ to the set of equivalent classes is the analogue of right projection and so we denote by $P\mathbb{O}^3_0$ the set of right equivalent classes. Define a map $\pi_{D_1}:\mathbb{O}^3_0\rightarrow\mathcal J$ by
\begin{equation*}\begin{aligned}\pi_{D_1}(\mathbf v)=\mathbf v\mathbf v^*D_{1}=\begin{pmatrix}y\bar z &y\bar{x} &|y|^{2}\\x\bar z &|x|^{2} &x\bar{y}\\|z|^2&z\bar{x}&z\bar{y} \end{pmatrix},
\end{aligned}\end{equation*}
where $D_{1}$ is given by $(\ref{d1})$ and $\mathbf v^*=(\bar y,\bar x,\bar z).$ If $v=(x,y)\in\mathbb{O}^{2},$  let $\tilde{v}$ denote the column vector
\begin{equation*}
\tilde{v}=\begin{pmatrix}y\\x\\1 \end{pmatrix}\in\mathbb O^3.
\end{equation*}

Set $j:\mathbb O^2\rightarrow M(3,\mathbb O)$ with \begin{equation}\begin{aligned}\label{o2.49}j(v)= \pi_{D_1}(\tilde v)=\tilde{v}\tilde{v}^*D_{1}=\begin{pmatrix}y &y\bar{x} &|y|^{2}\\x &|x|^{2} &x\bar{y}\\1&\bar{x}&\bar{y} \end{pmatrix}, \qquad j(\infty)= \pi_{D_1}\begin{pmatrix}1\\0\\0 \end{pmatrix}=\begin{pmatrix}0 &0&1\\0 &0 &0\\0&0&0 \end{pmatrix},
\end{aligned}\end{equation}
for $v=(x,y)\in\mathbb O^2.$
Then we can define
\begin{equation*}
\begin{aligned}
D_{\pm}:=\left\{\tilde{v}\in\mathbb{O}^3_0;\pm{\rm tr}(j(v))>0\right\},\quad
D_{0}:=\left\{\tilde{v}\in\mathbb{O}^3_0;{\rm tr}(j(v))=0\right\}.
\end{aligned}
\end{equation*}
$H^2_\mathbb O:=D_-/\sim\subset P\mathbb O_0^3$ is the {\it octonionic  hyperbolic space.}
Recall that ${\rm tr} j(v)={\rm tr} (\tilde v\tilde v^*D_1)=\tilde v^*D_1\tilde v,$ which  is the analog of  the Hermitian form in quaternionic case. We define a bilinear form on $\mathbb{O}^2$ (cf. \cite[p. 87]{Parker}) by
\begin{align}\label{nj}\langle j(v),j(w)\rangle=\frac12{\rm Re}\ {\rm tr}\left( j(v)j(w)+j(w)j(v)\right).\end{align}
Then  we have $\langle j(v),j(w)\rangle=\left|\tilde v^*D_1\tilde w\right|^2.$
Define \begin{align*}
(v,w):=\frac{|\tilde{v}^*D_{1}\tilde{w}|}
{|\tilde{v}^*D_{1}\tilde{v}|^{\frac{1}{2}}
|\tilde{w}^*D_{1} \tilde{w}|^{\frac{1}{2}}}.
\end{align*} The metric on $H_\mathbb{O}^2$ is given by (cf. \cite[p. 88]{Parker})
\begin{equation}\label{hyperdis}
\hbox{d}s^2=-4\frac{\left|\tilde{v}^*D_{1}\tilde{v}\right|
\left|\hbox{d}\tilde{v}^*D_{1}\hbox{d}\tilde{v}\right|-\left|\tilde{v}^*D_{1}
\hbox{d}\tilde{v}\right|^2}{\left|\tilde{v}^*D_{1}\tilde{v}\right|^2},
\end{equation}
at point  ${v}\in D_-$ and the distance $d(\cdot,\cdot)$ is given by
\begin{equation}\label{dis sie}
\cosh\left(\frac{d({v},{w})}{2}\right)=(v,w).
\end{equation}
$D_-$ is exactly the {\it octonionic Siegel domain}: $$\mathcal U=\{(x,y)\in\mathbb{O}^{2}: 2{\rm{Re}}\ y+|x|^{2}<0\}.$$

We introduce the positive definite form
$\langle v,w\rangle={v}_{1}\bar{w}_{1}+{v}_{2}\bar{w}_{2}$  on $\mathbb O^2$
and  the ball model for {octonionic hyperbolic space}
$
B^{16}=\left\{v\in\mathbb{O}^{2};\langle v,v\rangle<1\right\}.
$
The \emph{Cayley transform} is the map from the sphere $S^{15}$ minus the southern point to the boundary of the  Siegel domain $$\partial\mathcal U=\left\{(x,y)\in\mathbb{O}^{2}: 2{\rm{Re}}\ y+|x|^{2}=0\right\}$$ defined by
\begin{align*}
C:S^{15}\longrightarrow\mathcal U, \quad(v_{1},v_{2})\longmapsto\left(\sqrt{2}(1+v_{2})^{-1}v_{1},-(1-v_{2})(1+v_{2})^{-1}
\right).
\end{align*}

\subsection{The octonionic Heisenberg group}

The octonionic Heisenberg group $\mathscr{H}$ is $\mathbb{O}\oplus{\rm Im }\ \mathbb{O}$ equipped with the multiplication given by
\begin{align}\label{heiO}
(x,t)\cdot(y,s)=(x+y,t+s+2{\rm{Im}} (x\bar{y})),
\end{align}
where $(x,t),(y,s)\in\mathbb{O}\oplus{\rm Im }\ \mathbb{O}.$ Note that
\begin{equation}\begin{aligned}\label{2.17'''}
x\bar y=
&x_ay_a-x_0y_\gamma e_\gamma+y_0x_\gamma e_\gamma-x_\alpha y_\gamma\epsilon_{\alpha\gamma\beta}e_\beta,
\end{aligned}\end{equation}
by  (\ref{2.1'}), i.e. ${\rm Im}(x\bar y)=\left(-x_0y_\gamma+y_0x_\gamma-x_\alpha y_\beta\epsilon_{\alpha\beta\gamma}\right)e_\gamma.$  Therefore the multiplication of the octonionic Heisenberg group in terms of  real variables can be written as
\begin{align*}
(x,{t})\cdot({y},s)=\left(x+y,t_{\beta}+s_{\beta}+2
E_{ab}^{\beta}x_{a}y_{b}\right),
\end{align*}
where $x=(x_{0},\cdots,x_{7})\in\mathbb{R}^{8},$ $t=(t_{1},\cdots,t_{7})\in\mathbb{R}^{7},$ and $E^{\beta}$ are given by (\ref{o2.12}).

Since the definition of our  octonionic Heisenberg group is a bit different from that in \cite{Wang H} (with ${\rm{Im}} \left(\bar{x}y\right)$ replaced by ${\rm{Im}} \left( x\bar y\right)$), so are $E^\beta$'s.   The norm of the octonionic Heisenberg group $\mathscr{H}$ is defined by
\begin{align}\label{124}
\|(x,{t})\|:=(|x|^{4}+|{t}|^{2})^{\frac{1}{4}}.
\end{align}
By definition,
\begin{align}\label{2.14}
X_{a}=\frac{\partial}{\partial x_{a}}+2E_{ba}^{\beta}x_{b}
\frac{\partial}{\partial t_{\beta}},\quad a=0,1,\cdots,7,
\end{align}
are the left invariant vector fields on $\mathscr{H}.$
The standard $\mathbb{R}^{7}$-valued contact form of the group is
\begin{align}\label{0}
 \Theta_{0}:=\hbox{d}t-x\cdot\hbox{d} \bar{x} +\hbox{d}x\cdot \bar{x}.
\end{align}
If we write $\Theta_{0}=(\theta_{0;1},\cdots,
\theta_{0;7}),$ then we have
\begin{align}\label{215}
\theta_{0;\beta}=\hbox{d}t_{\beta}-2E_{ba}^{\beta}
x_{b}\hbox{d}x_{a},
\end{align}
by using (\ref{2.17'''}).
The standard Carnot-Carath\'eodory metric on the group is
$
g_{0}(X_{a},X_{b})=\delta_{ab}.
$
The transformations $I_\beta$ on $H_{0}$ are given by
$
I_{\beta}X_{a}=E_{ba}^{\beta}X_{b}.
$ Let $\nabla$ be the Biquard connection associated to this standard OC structure on $\mathscr H $, which is the flat model of OC manifolds, i.e.
\begin{align}\label{flat}
\nabla_{X_a}X_b=0,
\end{align} for any $X_a,X_b$   in (\ref{2.14}) and its scalar curvature and torsion  are  identically zero. The SubLaplacian on $\mathscr H$ is
$
\Delta_{0}=-\sum_{a=0}^{7}X_{a}^2.
$
We can identify $\mathscr{H}$ with the boundary of the   {Siegel domain},
by using the projection
\begin{equation}
\begin{aligned}\label{sth}
{\pi}:\qquad\partial\mathcal U\quad \longrightarrow\quad\mathscr{H},\qquad
(y,z) \longmapsto\left(\frac{y}{\sqrt{2}},\frac{|y|^{2}}{2}+z\right)=(x,t).
\end{aligned}\end{equation}

\subsection{The exceptional group ${\rm F}_{4(-20)}$} It is the group of  isometries of octionionic hyperbolic space $H_\mathbb O^2.$  The group is generated by
\begin{equation}\label{a1}
T=\begin{pmatrix}1&-1&-\frac{1}{2}\\0&1&1\\0&0&1\end{pmatrix},\quad D_\delta=\begin{pmatrix}\delta&0&0\\0&1&0\\0&0&\frac{1}{\delta}\end{pmatrix}, \quad R=\begin{pmatrix}0&0&1\\0&-1&0\\1&0&0\end{pmatrix},
\end{equation}
where $\delta\neq0\in\mathbb{R},$ and
\begin{equation}\label{a2}
S_\mu:\begin{pmatrix}y\\x\\1 \end{pmatrix}=\begin{pmatrix}\mu y\bar{\mu}\\ {\mu}x\\1 \end{pmatrix}
\end{equation}
for  unit imaginary octonion $\mu$ (cf.  \cite[p. 88]{Parker}).
The group generated by transformations $\{S_\mu:\mu\in{\rm Im}\ \mathbb O\}$ is the compact group ${\rm Spin} (7).$ We remark that in general $S_\mu\circ S_\nu\neq S_{\mu\nu}$ for unit imaginary octonions $\mu,\nu$. $F_{4(-20)}$ acts on $ \mathbb{O}^3_0$ as left action of matrices (\ref{a1})-(\ref{a2}),  and the induced action of $F_{4(-20)}$ on $\mathbb{O}^2 \subset P\mathbb{O}^3_0$ is \begin{align*}\gamma(  v)=\left(
\gamma({ \widetilde{ v}})_{1}\gamma(\tilde{  v})_{3}^{-1},
\gamma({  \widetilde{v}})_{2}\gamma({ \widetilde{v}})_{3}^{-1},1\right)^t.
\end{align*}
Then we have the following transformations   of $\mathscr{H}$:\\
(1) \emph{Dilations}: for given  positive number $\delta$ define
\begin{align}\label{dilation}
D_{\delta}:(x,t)\longrightarrow(\delta x,\delta^{2}t),\ \delta>0;
\end{align}
(2) \emph{Left translations}: for given $(y,s)\in \mathscr H$ define
\begin{align}\label{2.25}
\tau_{(y,s)}(x,t)=(y,s)\cdot(x,t);
\end{align}
(3) \emph{Rotations}: for given unit imaginary octonion $\mu$ define
\begin{align}\label{33}
S_\mu(x,t)=({\mu}x,\mu t \bar{\mu});
\end{align}
(4) \emph{Inversion}:
\begin{align}\label{44}
R(x,t)=\left(-\left(|x|^2-t\right)^{-1}x,
\frac{-t}{|x|^{4}+|t|^2}\right).
\end{align}

Recall that for a nilpotent Lie  group $G$  with Lie algebra $\mathfrak g,$ a continuous map $f:\Omega\rightarrow G$ is \emph{P-differentiable} at $p$ if the limit
$$Df(p)(q)=\lim_{\delta\rightarrow0^+} D_\delta^{-1}\circ L_{f(p)}^{-1}\circ f\circ L_p\circ D_\delta(q)$$
exists, uniformly for $q$ in compact subsets of $G$, where $L_p$ is a left translation by $p$ in $G$; if $Df(p)$ exists, then it is a strata-preserving homomorphism of $G.$ Then there is a Lie algebra homomorphism $\hbox{d}f(p):\mathfrak g\rightarrow\mathfrak g$ such that $Df(p)\circ exp=exp\circ \hbox{d}f(p).$ We call $Df(p)$ the {\it P-derivative} and $\hbox{d}f(p)$ the {\it P-differential} of $f$ at $p.$

We have the following version of Liouville-type theorem in the case of the  octonionic Heisenberg group. Denote the Carnot-Carath\'eodory distance
$d_{cc}(p,q):=\inf_{\gamma}\int_{0}^{1}|\gamma'(t)|\hbox{d}t$ for any $p,q\in \mathscr H,$  where $\gamma:[0,1]\rightarrow \mathscr H$ is taken over all  Lipschitzian horizontal curves, i.e. $\gamma'(t)\in H_{\gamma(t)}$ almost everywhere. Define balls $B_{cc}(x,r):=\left\{y\in\mathscr H:d_{cc}(x,y)<r\right\}.$
\begin{thm}\label{liou}{\rm(OC Liouville type theorem)}
Every conformal  contact transformation between open subsets of $\mathscr H$ is the restriction of the action of an element of ${\rm F}_{4(-20)}.$ Here conformal mapping is in the sense of sub-Riemannian manifold, i.e. $f^*g_0=\phi^2 g_0$ for some bounded  positive smooth function $\phi.$
\end{thm}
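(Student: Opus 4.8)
The plan is to argue in two stages: first, that any conformal contact transformation of $\mathscr H$ automatically preserves the entire OC structure, i.e.\ is OC conformal in the sense of the Introduction; second, that an OC conformal map between open subsets of the flat model $\mathscr H$ is the restriction of a global automorphism, hence of an element of ${\rm F}_{4(-20)}$.

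\emph{Stage 1: conformal contact $\Rightarrow$ OC conformal.} Let $f$ be a conformal contact transformation, taken smooth (lower regularity reduces to this via hypoellipticity, which I would not dwell on). Since $f_{*}$ preserves $H_{0}=\ker\Theta_{0}$, we have $f^{*}\Theta_{0}=\Lambda\,\Theta_{0}$ for a smooth ${\rm GL}(7,\mathbb R)$-valued function $\Lambda=(\Lambda_{\beta\gamma})$; write $f_{*}|_{H_{0}}=\phi\,U$ with $U\in{\rm O}(8)$ pointwise. Differentiating $f^{*}\theta_{\beta}=\Lambda_{\beta\gamma}\theta_{\gamma}$ and restricting to $H_{0}$ (so the terms $\mathrm d\Lambda_{\beta\gamma}\wedge\theta_{\gamma}$ vanish), then using $(\ref{2,1})$ and $f^{*}g_{0}=\phi^{2}g_{0}$, I obtain
\[
\phi^{2}\,U^{-1}I_{\beta}U=\textstyle\sum_{\gamma}\Lambda_{\beta\gamma}I_{\gamma},\qquad \beta=1,\dots,7,
\]
as antisymmetric endomorphisms of $\mathbb R^{8}$. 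As $\beta$ varies, the left side spans $U^{-1}\mathbb R^{7}U$ and the right side spans $\mathbb R^{7}=\{L_{\mu}:\mu\in{\rm Im}\,\mathbb O\}$ of $(\ref{so7})$; hence $U$ normalizes $\mathbb R^{7}$ in ${\rm O}(8)$. Restricting to orientation-preserving $f$ (the general case following by connectedness of the domain) so that $U\in{\rm SO}(8)$, the characterization of ${\rm Spin}(7)$ as the stabilizer of $\mathbb R^{7}$ in ${\rm SO}(8)$ \cite[Lemma 14.66]{Harvey} (cf.\ \cite{biquard}) gives $U\in{\rm Spin}(7)$; and multiplying the displayed identity by its transpose and using the Clifford relations $I_{\alpha}I_{\beta}+I_{\beta}I_{\alpha}=-2\delta_{\alpha\beta}$ of Proposition \ref{p2.4} forces $\Lambda\Lambda^{t}=\phi^{4}I_{7}$, so $\Psi:=\phi^{-2}\Lambda\in{\rm SO}(7)$ and $f^{*}I_{\beta}=\Psi_{\beta\gamma}I_{\gamma}$, i.e.\ $f^{*}\mathbb I=\Psi\,\mathbb I$. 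Thus $f$ is OC conformal.

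\emph{Stage 2: rigidity of the flat OC model.} By Yamaguchi's theorem \cite{Yamaguchi}, the standard OC structure on $\mathscr H$ — whose Biquard connection is flat, $(\ref{flat})$ — is the flat model of the parabolic geometry of type $({\rm F}_{4(-20)},P)$, $P$ being the stabilizer in ${\rm F}_{4(-20)}$ of a point of $\partial H^{2}_{\mathbb O}$, with $\mathscr H$ the big cell in ${\rm F}_{4(-20)}/P=S^{15}$. As a smooth contact diffeomorphism, $f$ is $P$-differentiable everywhere, and by Stage 1 each $P$-differential $Df(p)$ is a strata-preserving automorphism of $\mathscr H$ conformal on $H_{0}$ — hence a composition of a dilation $(\ref{dilation})$ and an element of the rotation group $(\ref{33})$ (which is exactly ${\rm Spin}(7)$) — so it lies in the Levi factor $\mathbb R_{+}\times{\rm Spin}(7)$ of $P$. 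Therefore $f$ is a conformal automorphism, with the minimal admissible structure group, of the Iwasawa nilpotent group $N=\mathscr H$ of ${\rm F}_{4(-20)}=KAN$; since the Tanaka--Yamaguchi prolongation of the graded symbol $(\mathfrak h,\ \mathbb R\oplus\mathfrak{spin}(7))$ is the finite-dimensional algebra $\mathfrak f_{4(-20)}=\mathfrak g_{-2}\oplus\mathfrak g_{-1}\oplus\mathfrak g_{0}\oplus\mathfrak g_{1}\oplus\mathfrak g_{2}$, the Liouville-type rigidity for flat parabolic geometries (equivalently, for $1$-quasiconformal maps of Carnot groups of Iwasawa type) implies that $f$ extends to a global automorphism of $S^{15}$, i.e.\ $f$ is the restriction of an element of ${\rm F}_{4(-20)}$. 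A self-contained alternative would normalize $f$ by elements of ${\rm F}_{4(-20)}$ so that $0$ is in the domain, $f(0)=0$ and $Df(0)=\mathrm{id}$, and then deduce from the structure equations of the flat Biquard connection that $\phi$ and the ${\rm Spin}(7)$-field $U$ are constant, whence $f=\mathrm{id}$.

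\emph{Main obstacle.} I expect two sources of difficulty. First, the algebra of Stage 1 is delicate because $\mathbb O$ is non-associative: the $I_{\beta}$ do not obey the octonion product (only $I_{a}I_{b}=I_{\beta}-N^{ab}$, $(\ref{Nij})$), the rotation group on $H_{0}$ is ${\rm Spin}(7)$ realized through two inequivalent representations on $\mathbb R^{8}$ and on ${\rm Im}\,\mathbb O$, and $S_{\mu}\circ S_{\nu}\neq S_{\mu\nu}$ in general, so extracting $\Psi$ and $U$ cleanly requires careful bookkeeping with the $E^{\beta}$'s and Harvey's spin description. Second, and more substantially, the rigidity step needs the fact that the conformal pseudogroup of $\mathscr H$ is no larger than ${\rm F}_{4(-20)}$, i.e.\ that the symbol admits no infinite-dimensional prolongation; proving this directly for the octonionic Heisenberg group, rather than invoking the general Tanaka--Yamaguchi / parabolic-geometry machinery, is where the real work lies.
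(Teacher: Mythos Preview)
Your proposal is essentially correct, but the paper takes a considerably shorter and more direct route. The paper's proof bypasses your Stage~1 entirely: from $f^{*}g_{0}=\phi^{2}g_{0}$ it observes immediately that $f$ is locally quasiconformal (in the Carnot--Carath\'eodory sense), then invokes Pansu's rigidity theorem \cite[Corollary 11.2]{Pansu} to conclude that the $P$-differential of $f$ is a similarity, and finally applies the Cowling--Ottazzi theorem \cite[Theorem 4.1]{Cowling}, which says that any conformal map (in this $P$-differential sense) of an Iwasawa $N$ group of a rank-one simple Lie group is the restriction of a group element. Since $\mathscr H$ is the Iwasawa nilpotent of ${\rm F}_{4(-20)}$, the result follows in three lines. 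Your Stage~1---showing that a conformal contact map is automatically OC conformal via the ${\rm Spin}(7)$ characterization---is a pleasant structural observation but is not needed for the Liouville theorem itself, and your anticipated difficulties with the non-associative algebra simply do not arise in the paper's argument. Your Stage~2 is close in spirit to the paper (you even mention ``$1$-quasiconformal maps of Carnot groups of Iwasawa type''), but you frame it through Tanaka--Yamaguchi prolongation and parabolic geometry, whereas the paper cites the off-the-shelf Cowling--Ottazzi result. What your approach buys is a more self-contained understanding of why the OC structure is rigid; what the paper's approach buys is brevity and the avoidance of any octonionic computation.
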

\begin{proof}Since $f^*g_0=\phi^2 g_0,$ it is direct to see that $f$ is locally  quasiconformal, i.e. we can write $f:\Omega\rightarrow\mathscr H$ for some domain $\Omega\subset\mathscr H$, and for any $y\in\Omega$ there exists a constant $k,r_0>0$ such that
\begin{align*}
B_{cc}\left(f(x),r/k\right)\subset f\left( B_{cc}\left(x,r\right)\right)\subset B_{cc}\left(f(x),kr\right),
\end{align*}
for some $r<r_0,x\in B_{cc}(y,r_0)\subset\Omega.$ It follows from Pansu's well-known rigidity theorem \cite[Corollary 11.2]{Pansu}, the $P$-differential of $f$ must be a similarity. Cowling and Ottazzi proved that (\cite[Theorem 4.1]{Cowling}): let $G$ be a Carnot group,  $\Omega$ be a connected open subset of $G,$ and let $f : \Omega\rightarrow G$ be a conformal mapping in the sense that $P$-differential is a similarity. Then if $G$ is the Iwasawa $N$ group of a real-rank-one simple Lie group, $f$ is the restriction to $\Omega$ of the action of an element of this associate Lie group on $N\cup\{\infty\}$. Thus $f$ extends analytically to a conformal map on $G$ or $G\setminus\{p\}$ for some point $p.$ Since the  octonionic Heisenberg group is an Iwasawa $N$ group of ${\rm F}_{4(-20)},$ the theorem follows directly.
\end{proof}
\begin{prop}\label{o2.9}
For  $\gamma\in {\rm F}_{4(-20)}$  we have
\begin{align}\label{pullg0}
\gamma^{*}g_{0}=\phi^2 g_{0},
\end{align}
for some positive smooth function $\phi.$  In particular, $\phi(x,t)=\delta,1,1$ and $\left(|x|^4+|t|^2\right)^{-\frac12}$ for the dilation $D_\delta,$  left translation $\tau_{(y,s)},$  rotation $S_\mu $ and the inversion $R,$ respectively.
\end{prop}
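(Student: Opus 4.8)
The plan is to use that OC conformality is stable under composition, with a multiplicative conformal factor: if $\gamma=\gamma_{1}\circ\gamma_{2}$ and $\gamma_{i}^{*}g_{0}=\phi_{i}^{2}g_{0}$, then $\gamma^{*}g_{0}=\gamma_{2}^{*}\bigl(\phi_{1}^{2}g_{0}\bigr)=\bigl((\phi_{1}\circ\gamma_{2})\,\phi_{2}\bigr)^{2}g_{0}$. Since the induced action of ${\rm F}_{4(-20)}$ on $\mathscr H$ is generated by the dilations (\ref{dilation}), the left translations (\ref{2.25}), the rotations (\ref{33}) and the inversion (\ref{44}), it therefore suffices to verify (\ref{pullg0}) with the stated factor for each of these four families; the conformal factor of a general $\gamma$ is then read off by iterating the displayed chain rule. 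Existence of \emph{some} positive smooth $\phi$ for every $\gamma$ also follows abstractly, since each isometry of the rank-one symmetric space $H^{2}_{\mathbb O}$ restricts to a conformal transformation of its conformal infinity, which under the projection $\pi$ of (\ref{sth}) is $\mathscr H\cup\{\infty\}$ with its standard OC structure.

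For the dilation $D_{\delta}$ one has $D_{\delta}^{*}(\hbox{d}x_{a})=\delta\,\hbox{d}x_{a}$ and $D_{\delta}^{*}(\hbox{d}t_{\beta})=\delta^{2}\,\hbox{d}t_{\beta}$, so by (\ref{215}) $D_{\delta}^{*}\theta_{0;\beta}=\delta^{2}\theta_{0;\beta}$; hence $D_{\delta}$ preserves $H_{0}=\ker\Theta_{0}$, on which $g_{0}=\sum_{a}(\hbox{d}x_{a})^{2}$ is multiplied by $\delta^{2}$, giving $\phi\equiv\delta$. A left translation $\tau_{(y,s)}$ preserves $g_{0}$ and $\Theta_{0}$ by construction, so $\phi\equiv1$. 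For a rotation $S_{\mu}$ with $\mu$ a unit imaginary octonion, the first step is to see that $S_{\mu}$ is a Lie group automorphism of $\mathscr H$: using $\bar\mu=-\mu$, $\mu^{2}=-1$ and the Moufang identity $u(ab)u=(ua)(bu)$ with $u=\mu$, $a=x$, $b=\bar y$, one gets $(\mu x)\overline{(\mu y)}=(\mu x)(\bar y\,\bar\mu)=-\mu(x\bar y)\mu=\mu(x\bar y)\bar\mu$, hence ${\rm Im}\bigl((\mu x)\overline{(\mu y)}\bigr)=\mu\,{\rm Im}(x\bar y)\,\bar\mu$, which is exactly compatibility of $S_{\mu}$ with the product (\ref{heiO}). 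Since $x\mapsto\mu x$ is orthogonal on $\mathbb O\cong\mathbb R^{8}$ and $t\mapsto\mu t\bar\mu$ is orthogonal on ${\rm Im}\,\mathbb O\cong\mathbb R^{7}$, the automorphism $S_{\mu}$ preserves $H_{0}$, $g_{0}$ and $\Theta_{0}$, so again $\phi\equiv1$.

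The remaining case, the inversion $R$ of (\ref{44}), is the one that needs real work: here I must show $\phi(x,t)=\bigl(|x|^{4}+|t|^{2}\bigr)^{-1/2}=\|(x,t)\|^{-2}$. As a consistency check, note that $|x|^{2}-t$ has real part $|x|^{2}$ and imaginary part $-t$, so $\bigl||x|^{2}-t\bigr|^{2}=|x|^{4}+|t|^{2}$, and a short computation from (\ref{44}) and (\ref{124}) yields $\|R(x,t)\|=\|(x,t)\|^{-1}$ and $R^{2}={\rm id}$; in particular $R$ is an involutive bijection of $\mathscr H\setminus\{0\}$. To obtain $\phi$ itself I would compute $R^{*}\Theta_{0}$ and $R^{*}g_{0}$ directly in the real coordinates, from (\ref{215}), (\ref{44}) and the differential of $(x,t)\mapsto(|x|^{2}-t)^{-1}x$ — all the octonionic manipulations being legitimate term by term because, by alternativity, every product that occurs lies in the associative subalgebra generated by two octonions — and I expect to find that $R$ preserves $H_{0}$, that $R^{*}\Theta_{0}=\Psi\,\|(x,t)\|^{-4}\,\Theta_{0}$ for some ${\rm SO}(7)$-valued $\Psi$, and hence that $R^{*}g_{0}=\|(x,t)\|^{-4}g_{0}$ on $H_{0}$, i.e. $\phi=\|(x,t)\|^{-2}$. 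A more conceptual alternative is to use that $R$ in (\ref{a1}) is an isometry of the octonionic hyperbolic metric (\ref{hyperdis}) on $\mathcal U$, hence acts conformally on $\partial\mathcal U$; transporting through $\pi$ in (\ref{sth}) and evaluating the boundary limit of the ratio of the defining functions $\bigl|\tilde v^{*}D_{1}\tilde v\bigr|$ reproduces the same $\phi$. Thus the explicit differentiation of the inversion is the sole genuine obstacle; the three other cases and the assembly into the general statement are routine.
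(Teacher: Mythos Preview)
Your handling of the dilations, left translations and rotations is correct and in fact more detailed than the paper, which dismisses these three cases as ``obvious''; your use of the Moufang identity to check that $S_\mu$ is a group automorphism of $\mathscr H$ is a nice touch. The composition/chain-rule reduction to generators is also exactly what the paper does implicitly.

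The genuine gap is the inversion $R$, which you yourself flag: you propose a brute-force computation of $R^*\Theta_0$ and $R^*g_0$ (or a boundary-limit argument via the hyperbolic metric) but do not carry either out. The paper's route here is considerably lighter and contains the one idea you are missing. First, since $R\in{\rm F}_{4(-20)}$ acts as a boundary conformal map, one already knows $R^*g_0=\phi^2 g_0$ for \emph{some} $\phi$, i.e.\ $R_*$ is a similarity on each horizontal fibre; hence to identify $\phi(p)$ it suffices to compute $g_0(R_*X_a,R_*X_a)$ for a \emph{single} index $a$. Second, from the commutation relations $R\circ D_\delta=D_{1/\delta}\circ R$ and $R\circ S_\mu=S_\mu\circ R$ (the latter follows from the Moufang identity $(uvu)w=u(v(uw))$ with $u=\mu$), together with your chain rule and the known factors $\phi_{D_\delta}=\delta$, $\phi_{S_\mu}=1$, one reduces to base points of the form $(1,0,\ldots,0,t)$. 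At such a point the paper writes $R_*X_1=\sum_l A_0^l X_l$ explicitly and checks $\sum_l (A_0^l)^2=(1+|t|^2)^{-1}=(|x|^4+|t|^2)^{-1}$. This is a short computation in one variable $t$, whereas your proposed direct differentiation of $(x,t)\mapsto(|x|^2-t)^{-1}x$ at a generic point would involve all fifteen real coordinates. One minor caution on your outline: the implication ``$R^*\Theta_0=\Psi\,\|(x,t)\|^{-4}\Theta_0$ hence $R^*g_0=\|(x,t)\|^{-4}g_0$'' is not automatic, since the contact form only determines $g_0(I_\beta\cdot,\cdot)$; one really must compute the pushforward of a horizontal vector, as the paper does.
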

\begin{proof}
The definition of similarity implies that $g_0\left(f_*X_a,f_*X_a\right)=\phi^2(x,t)g(X_a,X_a)$ for any $X_a$ given in (\ref{2.14}). So  (\ref{pullg0}) follows. It is obvious  that the proposition holds for dilations $D_\delta,$  left translation $\tau_{(y,s)}$ and rotation $S_\mu.$ Here we only need to  prove (\ref{pullg0}) for the inversion $R$.

Since given a point $(x,t)\in\mathscr{H}$, we can choose suitable $D_\delta,\tau_{(y,s)}$ and $S_\mu$ such that $S_\mu\circ D_\delta\circ\tau_{(y,s)}(x,t)=(1,0,\cdots,0, t')$  for some $t'\in {\rm Im}\,\mathbb{O}$,
 it is sufficient to prove the result  at point $(1,0,\cdots,0, t)$.
For $(x,t)=(1,0,\cdots,0,t ),$ we have
\begin{equation}\begin{aligned}\label{kappa}
\left(R_*X_1\right)&F(R(x,t))=\left.\frac{\hbox{d}}{\hbox{d}\kappa}F\left(R\left[ (1,0,\cdots,0,t)\cdot(\kappa,0,\cdots,0)\right]\right)\right|_{\kappa=0}\\ =&\left.\frac{\hbox{d}}{\hbox{d}\kappa}F\left(R (1+\kappa,0,\cdots,0,t)\right)\right|_{\kappa=0} =\left.\frac{\hbox{d}}{\hbox{d}\kappa} F\left(-\frac{\mathcal{E}_0}{\rho},
\cdots,-\frac{\mathcal{E}_7}{\rho},\frac{-t}{\rho}\right) \right|_{\kappa=0},
\end{aligned}\end{equation}
for any smooth function $F$ with
\begin{align*}
\mathcal{E}_0=(1+\kappa)^3,\quad\mathcal{E}_\beta:=t_\beta(1+\kappa),\quad \rho=(1+\kappa)^4+|t|^2,\quad\ \beta=1,\cdots,7.
\end{align*}
Then the right hand side of (\ref{kappa}) equals to
\begin{equation}\begin{aligned}\label{idr}
\frac{1-3|t|^2}{(1+|t|^2)^2}F_0+\frac{t_\beta\left(3-|t|^2\right)}{(1+|t|^2)^2}F_\beta+ 4\frac{t_\beta}{(1+|t|^2)^2}F_{7+\beta}\\=A_0^l\left(F_l+2
E_{kl}^{\beta}\left(-\frac{\mathcal E_k|_{\kappa=0}}{1+|t|^2}\right)F_{7+\beta}\right)(R(x,t)),
\end{aligned}\end{equation}
where
\begin{equation*}\begin{aligned}
A_0^0=\frac{1-3|t|^2}{(1+|t|^2)^2},\quad
A_0^\beta=\frac{t_\beta\left(3-|t|^2\right)}{(1+|t|^2)^2}.
\end{aligned}\end{equation*}
Since
\begin{equation*}\begin{aligned}
-A_0^lE_{kl}^\beta\frac{\mathcal E_k|_{\kappa=0}}{1+|t|^2}=&-A_0^lE_{0l}^\beta\frac{1}{1+|t|^2}-A_0^lE_{\alpha l}^\beta\frac{t_\alpha}{1+|t|^2}\\=&-A_0^0E_{00}^\beta\frac{1}{1+|t|^2} -A_0^\alpha E_{0\alpha}^\beta\frac{1}{1+|t|^2}-A_0^0E_{\alpha 0}^\beta\frac{t_\alpha}{1+|t|^2}-A_0^\gamma E_{\alpha \gamma}^\beta\frac{t_\alpha}{1+|t|^2}\\=&\frac{(|t|^2-3)t_\alpha}{(1+|t|^2)^3} E_{0\alpha}^\beta +\frac{(1-3|t|^2)t_\alpha}{(1+|t|^2)^3}E_{0\alpha}^\beta +\frac{(|t|^2-3)}{(1+|t|^2)^3} E_{\alpha\gamma}^\beta t_\alpha t_\gamma\\=&\frac{-2}{(1+|t|^2)^2}E_{0\alpha}^\beta t_\alpha=\frac{2t_\beta}{(1+|t|^2)^2}.
\end{aligned}\end{equation*}
The third identity holds by $E_{00}^\beta=0$ and the fourth identity holds by $E_{\alpha\gamma}^\beta t_\alpha t_\gamma=0.$
Thus $\left.R_*X_1\right|_{R(1,0,\cdots,0,t)}=\left.A_0^lX_l\right|_{R(1,0,\cdots,0,t)}.$ We can easily check $\sum_{l=0}^7\left(A_0^l\right)^2=\frac{1}{1+|t|^2}.$ The proposition is proved.
\end{proof}
As mentioned before, an OC manifold $(M,g,\mathbb{I})$ is always \emph{spherical}, i.e. it is locally conformally  OC equivalent to an open set of  the octionionic  Heisenberg group with standard OC structure. A conformal class can be described topologically as a manifold   whose  coordinate charts are given by open subsets of the octonionic  Heisenberg group and elements of ${\rm F}_{4(-20)}$ as transition maps. So we can omit $\mathbb I$ in the notion $(M,g,\mathbb I)$ of an OC manifold.
\section{The  OC Yamabe operator and its transformation formula under conformal transformations}
\subsection{The Biquard connection}\qquad

{\bf Theorem} {\rm(\cite[Theorem B]{biquard})}
For an OC manifold  with  Carnot-Carath\'eodory metric $g$ on $H$, there exists a unique connection $\nabla$ on $H$ and a unique supplementary subspace $V$ of $H$ in
$TM$, such that\\
(i) $\nabla$ preserves the decomposition  $H\oplus V$ and the metric;\\
(ii) for $X,Y\in H$, one has $T_{X,Y}=-[X,Y]_{V}$;\\
(iii) $\nabla$ preserves the $ {\rm Spin}(7)$-structure on $H$;\\
(iv) for $R\in V$, the endomorphism $\cdot\rightarrow(T_{R,\cdot})_{H}$ of $H$
 lies in  $\mathfrak{so}_{7}^\perp$;\\
(v) the connection on $V$ is induced by the natural identification of $V$
with the subspace $\mathbb{R}^7$ of the endomorphisms of $H$.
\vskip 5mm
Since the Biquard  connection preserving Carnot-Carath\'eodory metric on it, we have $$\nabla(\hbox{d}\theta_\alpha)\in\Lambda^1 H\otimes\mathfrak{so}(8)=\Lambda^1H\otimes\left(\mathfrak{so}(7)\oplus\mathbb{R}^7
\right),$$ and that the connection preserves the {\it ${\rm Spin}(7)$ structure} if its component in $\Lambda^1H\otimes\mathbb{R}^7$ vanish (cf. \cite[p. 84]{biquard}).
It satisfies the following properties proved by Biquard \cite[Proposition II.1.7, II.1.9]{biquard}. Recall that {\it Reeb vector fields} $R_\alpha,\alpha\in\mathbb Z_{8+},$ satisfy $i_{{R}_\alpha}\hbox{d}{\theta}_\alpha|_H=0,i_{{R}_\alpha}\hbox{d}{\theta}_\beta|_H
=-i_{{R}_\beta}\hbox{d}{\theta}_\alpha|_H$ for $\alpha\neq\beta$ and ${\theta}_\beta({R}_\alpha)=\delta_{\alpha\beta}.$
\begin{prop}\label{2.1.7}{\rm(1)} The Biquard  connection satisfies
\begin{align*}
\nabla{\rm d}\theta_\alpha=-(i_{R_\beta}{\rm d}\theta_\alpha)|_H\otimes{\rm d}\theta_\beta,
\end{align*}
and in particular, $(i_{R_\beta}{\rm d}\theta_\alpha)|_H=-(i_{R_\alpha}{\rm d}\theta_\beta)|_H$ and $(i_{R_\alpha}{\rm d}\theta_\alpha)|_H=0.$ By isomorphism ${\rm d}\theta_\alpha\rightarrow R_\alpha,$ we have \begin{align}\label{nablaR}
\nabla R_\alpha=-(i_{R_\beta}{\rm d}\theta_\alpha)|_H\otimes
R_\beta.
\end{align}
{\rm(2)} It is a metric connection on $V$ and
\begin{align}\label{xr}
\nabla_XR=[X,R]_V,\quad X\in H,R\in V.
\end{align}
\end{prop}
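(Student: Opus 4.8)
The plan is to read off both statements as the explicit coordinates of the Biquard connection, using only the defining properties (i)--(v) of Theorem~B. Two reductions are immediate. First, the metricity of $\nabla$ on $V$ in part~(2) is nothing but the restriction to $V$ of property~(i): $\nabla$ preserves the metric and the splitting $H\oplus V$, hence restricts to a metric connection on $V$. Second, under the identification of $V$ with the subbundle $\mathbb{R}^7\subset{\rm End}(H)$ furnished by property~(v), the Reeb frame $R_\alpha$ corresponds to the frame ${\rm d}\theta_\alpha$ (equivalently $I_\alpha$), so $\nabla_A R_\alpha=\sum_\beta a_{\alpha\beta}(A)R_\beta$ and $\nabla_A{\rm d}\theta_\alpha=\sum_\beta a_{\alpha\beta}(A){\rm d}\theta_\beta$ share the same coefficients. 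Thus the two formulas of part~(1) are equivalent, and everything comes down to computing the $a_{\alpha\beta}$ on horizontal vectors.

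First I would fix the shape of $\nabla{\rm d}\theta_\alpha$. Because $\nabla$ preserves the ${\rm Spin}(7)$-structure (property~(iii)), its connection form takes values in $\mathfrak{so}(7)$, so acting on the orthonormal frame $\{{\rm d}\theta_\beta\}$ of the rank-$7$ bundle $\mathbb{R}^7\subset\Lambda^2H$ it preserves that bundle and $\nabla_X{\rm d}\theta_\alpha=\sum_\beta a_{\alpha\beta}(X){\rm d}\theta_\beta$ with $(a_{\alpha\beta}(X))$ a skew-symmetric matrix; this already gives the tensorial form asserted in part~(1). The core step is to evaluate the $a_{\alpha\beta}$ for $X\in H$, and here I would use that each ${\rm d}\theta_\alpha$ is \emph{closed}. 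Writing the Bianchi-type identity for the closed $2$-form ${\rm d}\theta_\alpha$ with respect to $\nabla$,
\begin{align*}
0={\rm d}({\rm d}\theta_\alpha)(X,Y,Z)=\sum_{\rm cyc}(\nabla_X{\rm d}\theta_\alpha)(Y,Z)+\sum_{\rm cyc}{\rm d}\theta_\alpha(T_{X,Y},Z),
\end{align*}
and substituting the torsion normalization from property~(ii), $T_{X,Y}=-[X,Y]_V=\sum_\beta{\rm d}\theta_\beta(X,Y)R_\beta$ for $X,Y\in H$ (using $\theta_\beta([X,Y])=-{\rm d}\theta_\beta(X,Y)$), the identity collapses, after grouping terms by ${\rm d}\theta_\beta$, to
\begin{align*}
0=\sum_\beta\big[b_{\alpha\beta}(X){\rm d}\theta_\beta(Y,Z)-b_{\alpha\beta}(Y){\rm d}\theta_\beta(X,Z)+b_{\alpha\beta}(Z){\rm d}\theta_\beta(X,Y)\big],
\end{align*}
where $b_{\alpha\beta}:=a_{\alpha\beta}+(i_{R_\beta}{\rm d}\theta_\alpha)|_H$ is exactly the discrepancy between $a_{\alpha\beta}$ and the claimed value. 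One checks the left-hand side is totally antisymmetric in $X,Y,Z$, so it is the image of $(b_{\alpha\beta})_\beta\in H^*\otimes\mathbb{R}^7$ under the ${\rm Spin}(7)$-equivariant ``wedge'' map into $\Lambda^3H^*$.

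It then remains to conclude $b_{\alpha\beta}=0$, that is, $a_{\alpha\beta}(X)=-(i_{R_\beta}{\rm d}\theta_\alpha)(X)={\rm d}\theta_\alpha(R_\beta,X)$ for $X\in H$. This is the main obstacle: it amounts to the injectivity of the above wedge map $H^*\otimes\mathbb{R}^7\to\Lambda^3H^*$ between two $56$-dimensional ${\rm Spin}(7)$-modules, which is precisely the algebraic fact that makes the horizontal part of the Biquard connection unique in Theorem~B. By Schur's lemma it suffices to see the map is nonzero on each irreducible summand, the one check where the non-associative structure constants $\epsilon_{\alpha\beta\gamma}$ of $\mathbb{O}$ genuinely enter. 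Granting $b=0$, the first formula of part~(1) follows, and the ``in particular'' relations drop out of the skew-symmetry of $(a_{\alpha\beta})$: the diagonal $a_{\alpha\alpha}=0$ gives $(i_{R_\alpha}{\rm d}\theta_\alpha)|_H=0$, while $a_{\alpha\beta}=-a_{\beta\alpha}$ gives $(i_{R_\beta}{\rm d}\theta_\alpha)|_H=-(i_{R_\alpha}{\rm d}\theta_\beta)|_H$.

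Finally, the formula (\ref{nablaR}) is the transcription of part~(1) under the identification of (v), and the formula (\ref{xr}) of part~(2) follows by feeding the coefficient formula back through property~(v): for $X\in H$,
\begin{align*}
\nabla_XR_\alpha=\sum_\beta a_{\alpha\beta}(X)R_\beta=\sum_\beta(i_{R_\alpha}{\rm d}\theta_\beta)(X)R_\beta=\sum_\beta\theta_\beta([X,R_\alpha])R_\beta=[X,R_\alpha]_V,
\end{align*}
using the cross-relation $(i_{R_\alpha}{\rm d}\theta_\beta)|_H=-(i_{R_\beta}{\rm d}\theta_\alpha)|_H$ and $\theta_\beta([X,R_\alpha])=-{\rm d}\theta_\beta(X,R_\alpha)$. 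Extending from the frame $R_\alpha$ to arbitrary $R\in V$ by the Leibniz rule then yields $\nabla_XR=[X,R]_V$, completing part~(2). I expect the genuinely hard point to be the algebraic isomorphism of the third paragraph; once it is granted, the rest is bookkeeping with the Reeb normalizations.
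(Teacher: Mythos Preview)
The paper does not actually supply a proof of this proposition: it is quoted verbatim from Biquard's monograph, with the attribution ``proved by Biquard \cite[Proposition II.1.7, II.1.9]{biquard}'' appearing just before the statement. So there is no in-paper argument to compare against; your proposal is effectively an attempt to reconstruct Biquard's proof from the axioms (i)--(v).

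Your reconstruction is largely sound and follows the natural line. The reduction of both displayed formulas in part~(1) to a single coefficient computation via property~(v) is correct, as is the use of the closed-form identity $0={\rm d}({\rm d}\theta_\alpha)$ expanded through the torsion-Cartan formula. Your derivation of $\nabla_XR_\alpha=[X,R_\alpha]_V$ from the coefficient formula and the Reeb normalizations is also clean. One small redundancy: in this paper the relations $(i_{R_\alpha}{\rm d}\theta_\alpha)|_H=0$ and $(i_{R_\beta}{\rm d}\theta_\alpha)|_H=-(i_{R_\alpha}{\rm d}\theta_\beta)|_H$ are taken as the \emph{definition} of the Reeb fields (see the sentence immediately preceding the proposition), so your deduction of them from the skew-symmetry of $(a_{\alpha\beta})$ is not needed here, though it is harmless.

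The genuine gap is the one you yourself flag: the injectivity of the ${\rm Spin}(7)$-equivariant wedge map $H^*\otimes\mathbb{R}^7\to\Lambda^3H^*$. You observe that both sides are $56$-dimensional and invoke Schur, but you do not actually verify nonvanishing on each irreducible piece; equivalently, you do not exhibit the matching irreducible decompositions (under ${\rm Spin}(7)$ one has $\mathbb{R}^8\otimes\mathbb{R}^7\cong\Lambda^3\mathbb{R}^8\cong\mathbf{8}\oplus\mathbf{48}$). Until that is checked, the conclusion $b_{\alpha\beta}=0$ is not established. This is precisely the representation-theoretic lemma Biquard proves to pin down the horizontal connection, so your instinct that it is ``the hard point'' is right---but it must be carried out, not merely asserted, for the proof to be complete.
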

\begin{prop}{\rm(1)}\label{II}
$\mathfrak{so}(7)\cong \mathop{\rm span}\{I_\alpha I_\beta; 1\leq \alpha<\beta\leq7 \};$\\
{\rm(2)}\label{module}
$\mathbb R^7\cong{\rm span}\{I_\alpha\}$ is an $\mathfrak{so}(7)$ module.
\end{prop}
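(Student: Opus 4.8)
The plan is to treat the matrices $I_\beta=E^\beta$ $(\beta=1,\dots,7)$ purely as a representation of the real Clifford algebra and to carry out everything with the trace form, so that the non-associativity of $\mathbb O$ (which enters only through the correction term $N^{ab}$ in (\ref{Nij})) never intervenes. By Proposition \ref{p2.4} the $I_\beta$ are antisymmetric and satisfy $I_\alpha I_\beta+I_\beta I_\alpha=-2\delta_{\alpha\beta}\,\mathrm{Id}$; in particular every product $I_\alpha I_\beta$ with $\alpha\neq\beta$ is again antisymmetric, so both $\mathrm{span}\{I_\alpha\}$ and $\mathrm{span}\{I_\alpha I_\beta:\alpha<\beta\}$ lie inside $\mathfrak{so}(8)$. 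Since $\mathbb R^7=\mathrm{span}\{I_\alpha\}$ by definition, the content of the proposition is to identify the $21$-dimensional complement in (\ref{so7}) with $\mathrm{span}\{I_\alpha I_\beta\}$ and to read off its Lie structure.

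First I would establish the splitting and the dimension count using the inner product $\langle A,B\rangle:=-\tfrac18\mathrm{tr}(AB)=\tfrac18\mathrm{tr}(A^tB)$ on $\mathfrak{so}(8)$. The one computational input is that any product $M=I_{\alpha_1}\cdots I_{\alpha_k}$ of $k\geq 1$ distinct generators is traceless: conjugating by $I_\gamma$ (with $I_\gamma^{-1}=-I_\gamma$) flips the sign of every factor $I_{\alpha_i}$ with $\alpha_i\neq\gamma$, so for a suitable $\gamma$ one gets $I_\gamma M I_\gamma^{-1}=-M$ and hence $\mathrm{tr}\,M=0$ (such a $\gamma$ exists because there are seven generators, more than any $k\leq 4$ occurring here). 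Together with $(I_\alpha I_\beta)^2=-\mathrm{Id}$, this shows the $7+21=28$ matrices $\{I_\alpha\}\cup\{I_\alpha I_\beta\}_{\alpha<\beta}$ are orthonormal, hence a basis of $\mathfrak{so}(8)$; moreover $\langle I_\alpha,I_\gamma I_\delta\rangle=0$, so $\mathrm{span}\{I_\alpha I_\beta\}$ is exactly the orthogonal complement of $\mathbb R^7$, which by (\ref{so7}) is $\mathfrak{so}(7)$.

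For part (1) I would then compute $[I_\alpha I_\beta,I_\gamma I_\delta]$ directly from the Clifford relations in the three cases where $\{\alpha,\beta\}\cap\{\gamma,\delta\}$ is empty, a singleton, or equal; the bracket is $0$ when the index pairs are disjoint or equal and $\pm2\,I_\cdot I_\cdot$ when they share one index, so the span is a Lie subalgebra. Comparing with the standard rotation basis $M_{\alpha\beta}=E_{\alpha\beta}-E_{\beta\alpha}$ of $\mathfrak{so}(7)$, the linear map $M_{\alpha\beta}\mapsto-\tfrac12 I_\alpha I_\beta$ matches the structure constants (for instance $[M_{\alpha\beta},M_{\beta\delta}]=M_{\alpha\delta}$ corresponds to $[-\tfrac12 I_\alpha I_\beta,-\tfrac12 I_\beta I_\delta]=-\tfrac12 I_\alpha I_\delta$), giving the Lie algebra isomorphism $\mathfrak{so}(7)\cong\mathrm{span}\{I_\alpha I_\beta\}$. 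Conceptually this is the statement that $\mathfrak{spin}(7)=\mathrm{span}\{\tfrac12 e_\alpha e_\beta\}\subset\mathrm{Cl}(7)$ acts faithfully on the spinor module $\mathbb O=\mathbb R^8$, but the explicit bracket check is self-contained. For part (2) I would compute $[I_\alpha I_\beta,I_\gamma]$ the same way: it vanishes when $\gamma\notin\{\alpha,\beta\}$ and equals $\pm2\,I_\cdot$ otherwise, so $\mathrm{span}\{I_\alpha\}=\mathbb R^7$ is invariant under the adjoint action of $\mathrm{span}\{I_\alpha I_\beta\}=\mathfrak{so}(7)$, i.e. it is an $\mathfrak{so}(7)$-module — indeed the standard vector representation.

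The main obstacle is the linear-independence step: because $\mathbb O$ is non-associative, $I_\alpha I_\beta$ is not simply $\pm I_\gamma$ (there is the $N^{ab}$ correction in (\ref{Nij})), so one cannot reduce products using the octonion multiplication table and must argue abstractly instead. The trace/conjugation argument is exactly what circumvents this, since it uses only $I_\gamma^2=-\mathrm{Id}$ and anticommutativity from Proposition \ref{p2.4}; keeping careful track of the $\pm$ signs and of the factor $-\tfrac12$ in the bracket computations of parts (1) and (2) is the only other point requiring attention.
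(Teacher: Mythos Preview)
Your argument is correct and takes a genuinely different route from the paper. The paper establishes linear independence of $\{I_\alpha\}\cup\{I_\alpha I_\beta\}$ by invoking the explicit octonion structure: it uses the identity $I_\alpha I_\beta=I_\gamma-N^{\alpha\beta}$ from (\ref{Nij}) and checks that the correction matrices $N^{\alpha\beta}$ (whose entries are the associator constants $\epsilon_{\alpha\beta\gamma\delta}$ of Proposition~\ref{abc}) are linearly independent among themselves and from the $I_\gamma$'s by inspecting their entry patterns. You bypass this entirely by working with the Clifford relations of Proposition~\ref{p2.4} and the trace form, showing the $28$ matrices are orthonormal via the conjugation trick; the Lie bracket computations in both proofs are then identical. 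Your approach is cleaner and makes clear that nothing beyond the anticommutation of the $I_\beta$ is needed, while the paper's approach keeps the octonion multiplication visible, which is used elsewhere (e.g.\ in (\ref{r7}) and the scalar-curvature computation). One small wording issue: your parenthetical ``such a $\gamma$ exists because there are seven generators, more than any $k\leq 4$'' literally justifies only the choice of $\gamma$ \emph{outside} the index set, which yields sign $(-1)^k$ and works for odd $k$; for even $k$ you need $\gamma$ \emph{inside} the set (sign $(-1)^{k-1}$), which is available since $k\geq 1$. The phrase ``for a suitable $\gamma$'' is correct --- just tighten the justification.
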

\begin{proof}(1)
Recall that  $\left(N^{\alpha\beta}\right)_{\gamma\delta}=2\epsilon_{\alpha\beta\delta\gamma},$  where $\epsilon_{\alpha\beta\delta\gamma}$ is given by  (\ref{mnps}). $\left\{N^{\alpha\beta};1\leq \alpha<\beta\leq 7\right\}$ are linearly independent, because $N^{\alpha\beta}$ is $(8\times8)$-antisymmetric matrix with only $4$ entries  not zero, and for any fixed $\alpha<\beta,$ the nonzero elements are at different entries (cf. Proposition \ref{abc}). As $I_\alpha$ is an $(8\times8)$-antisymmetric matrix with $(0,\alpha)$ entry to be  $-1,$ while $(0,a)$ entry  of  $N^{\alpha\beta}$ is always $0$ for any $a,$ so  $\left\{I_\alpha;\alpha=1,\cdots,7\right\}\cup\left\{N^{\alpha\beta};1\leq \alpha<\beta\leq 7\right\}$ are linearly independent.  $\left\{I_\alpha I_\beta;1\leq \alpha<\beta\leq 7\right\}$ is closed under Lie brackets. Since
\begin{align*}
\left[I_\alpha I_\beta,I_\gamma I_\delta\right]=I_\alpha I_\beta I_\gamma I_\delta-I_\gamma I_\delta I_\alpha I_\beta=0,
\end{align*}
for $\alpha,\beta,\gamma,\delta$ different, and
\begin{align*}
\left[I_\alpha I_\beta,I_\alpha I_\gamma\right]=I_\alpha I_\beta I_\alpha I_\gamma-I_\alpha I_\gamma I_\alpha I_\beta=2I_\beta I_\gamma,
\end{align*}
for $\alpha,\beta,\gamma$ different.
Moreover, $\left\{I_\alpha I_\beta; 1\leq \alpha<\beta\leq 7\right\}$ are linearly independent by $I_\alpha I_\beta=I_\gamma-N^{\alpha\beta}$ if $e_\alpha e_\beta=e_\gamma.$ Therefore,
$\left\{I_\alpha;\alpha=1,\cdots,7\right\}\cup\left\{I_\alpha I_\beta; 1\leq \alpha<\beta\leq 7\right\}$ are linearly independent. Note that $\mathbb R^7={\rm span}\left\{I_\alpha;\alpha=1,\cdots,7\right\},$   $\mathfrak{so}(8)\cong\mathbb R^7\oplus\mathfrak{so}(7),$ and dim $\mathfrak{so}(7)=21=$dim $\left\{I_\alpha I_\beta; 1\leq \alpha<\beta\leq 7\right\}.$ Then we have $\mathfrak{so}(7)\cong{\rm span}\{I_\alpha I_\beta ;1\leq \alpha<\beta\leq7 \}.$\\
(2)
We have   $[I_\alpha I_\beta,I_\gamma]=I_\alpha I_\beta I_\gamma-I_\gamma I_\alpha I_\beta=0,$ for $\alpha,\beta,\gamma$ different and $[I_\alpha I_\beta,I_\alpha]=I_\alpha I_\beta I_\alpha-I_\alpha I_\alpha I_\beta=2I_\beta\in\mathbb R^7.$ The proposition is proved.
\end{proof}
\begin{prop}
The connection coefficients of the Biquard connection is $\mathfrak{so}(7)$-valued.
\end{prop}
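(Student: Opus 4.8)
The plan is to show that the Biquard connection, which by its defining property (iii) preserves the $\mathrm{Spin}(7)$-structure on $H$, must have connection coefficients lying in the Lie algebra $\mathfrak{so}(7)$ when expressed relative to an adapted orthonormal frame of $H$. First I would recall from property (i) that $\nabla$ is metric on $H$, so with respect to a local orthonormal frame $\{X_a\}_{a=0}^{7}$ of $H$ the connection one-form $\omega=(\omega^a_{\ b})$ is $\mathfrak{so}(8)$-valued, i.e. $\omega^a_{\ b}=-\omega^b_{\ a}$. By the decomposition (\ref{so7}), namely $\mathfrak{so}(8)=\mathfrak{so}(7)\oplus\mathbb{R}^7$ as $\mathfrak{so}(7)$-modules, together with Proposition \ref{II} which identifies $\mathfrak{so}(7)\cong\mathrm{span}\{I_\alpha I_\beta\}$ and $\mathbb{R}^7\cong\mathrm{span}\{I_\alpha\}$, I would write $\omega=\omega_{\mathfrak{so}(7)}+\omega_{\mathbb{R}^7}$ and the goal becomes showing the $\mathbb{R}^7$-component vanishes.

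Next I would translate the condition ``$\nabla$ preserves the $\mathrm{Spin}(7)$-structure'' into the vanishing of $\omega_{\mathbb{R}^7}$. As recalled in the excerpt (``the connection preserves the $\mathrm{Spin}(7)$ structure if its component in $\Lambda^1 H\otimes\mathbb{R}^7$ vanishes,'' cf. \cite[p.~84]{biquard}), the $\mathrm{Spin}(7)$-structure on $H$ is precisely the reduction determined by the rank-$7$ bundle $\mathbb{I}=\mathrm{span}\{I_1,\dots,I_7\}$, equivalently by the defining $\mathrm{Spin}(7)$-form on $\mathbb{R}^8$. Since $\nabla g=0$ already places $\omega$ in $\mathfrak{so}(8)$, and since the stabilizer of the $\mathrm{Spin}(7)$-structure inside $\mathrm{SO}(8)$ has Lie algebra exactly $\mathfrak{so}(7)$, the statement that $\nabla$ preserves this structure is equivalent to $\omega$ being $\mathfrak{so}(7)$-valued, i.e. $\omega_{\mathbb{R}^7}=0$. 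Concretely, $\nabla$ preserving the $\mathrm{Spin}(7)$-structure means $\nabla$ acts trivially on the bundle $TM/\mathrm{Spin}(7)$-frame, which by the module structure in Proposition \ref{II}(2) — where $\mathbb{R}^7$ is an $\mathfrak{so}(7)$-module via the brackets $[I_\alpha I_\beta,I_\gamma]$ computed there — forces the projection of $\omega$ onto $\mathbb{R}^7$ to be zero.

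I would therefore organize the proof as: (a) by (i), $\omega\in\Lambda^1 H\otimes\mathfrak{so}(8)$; (b) by (\ref{so7}) and Proposition \ref{II}, decompose $\mathfrak{so}(8)=\mathfrak{so}(7)\oplus\mathbb{R}^7$ with $\mathfrak{so}(7)=\mathrm{span}\{I_\alpha I_\beta\}$, $\mathbb{R}^7=\mathrm{span}\{I_\alpha\}$; (c) observe that the hermitian compatibility (\ref{2.01}) and (\ref{2,1}) identify the $\mathrm{Spin}(7)$-structure on $H$ with the span of the $I_\alpha$'s, so that property (iii) — $\nabla$ preserves the $\mathrm{Spin}(7)$-structure — is by definition the vanishing of the $\Lambda^1 H\otimes\mathbb{R}^7$ component of $\nabla$; (d) conclude $\omega$ is $\mathfrak{so}(7)$-valued. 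The identification in step (c) is essentially a restatement of Biquard's remark, so the argument is short.

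The main obstacle is making step (c) rigorous and self-contained: one must verify that ``preserving the $\mathrm{Spin}(7)$-structure'' in Biquard's sense coincides with ``connection form valued in $\mathfrak{so}(7)$ under the identification of Proposition \ref{II}''. This requires knowing that the $7$-dimensional complement $\mathbb{R}^7$ in (\ref{so7}) is exactly the infinitesimal model for deforming the $\mathrm{Spin}(7)$-structure — i.e. that $\mathrm{Spin}(7)$ is the isotropy subgroup of $\mathrm{SO}(8)$ at the relevant point of the flag, with $\mathfrak{so}(8)/\mathfrak{so}(7)\cong\mathbb{R}^7$ — which is exactly the content of Harvey's generation statement \cite[Lemma 14.66]{Harvey} cited above and the characterization of $\mathrm{Spin}(7)$ as the subgroup of $\mathrm{SO}(8)$ conjugating $\mathbb{R}^7=\{L_\mu\}$ to itself. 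Once that identification is in hand, the conclusion that the connection coefficients are $\mathfrak{so}(7)$-valued is immediate from property (iii) of the Biquard connection.
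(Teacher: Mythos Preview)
Your approach is correct but proceeds differently from the paper. You argue abstractly: since $\mathrm{Spin}(7)\subset\mathrm{SO}(8)$ has Lie algebra $\mathfrak{so}(7)$ in the decomposition~(\ref{so7}), axiom~(iii) of the Biquard connection---that $\nabla$ preserves the $\mathrm{Spin}(7)$-structure---is by the general theory of $G$-structures exactly the statement that the connection form takes values in $\mathfrak{so}(7)$. The paper instead gives a concrete commutator argument: it starts from formula~(\ref{nablaI}) (a consequence of Proposition~\ref{2.1.7}), which says $\nabla I_\alpha$ lies in $\mathrm{span}\{I_\beta\}=\mathbb{R}^7$, and rewrites this as $[\Gamma_a,I_\alpha]\in\mathbb{R}^7$ for each $\alpha$. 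Splitting $\Gamma_a=(\Gamma_a)_{\mathfrak{so}(7)}+(\Gamma_a)_{\mathbb{R}^7}$, the paper uses Proposition~\ref{II}(2) to see $[(\Gamma_a)_{\mathfrak{so}(7)},I_\alpha]\in\mathbb{R}^7$ automatically, and then uses the explicit relation~(\ref{Nij}) to check that $[I_\beta,I_\alpha]\notin\mathbb{R}^7$ for $\beta\neq\alpha$, forcing $(\Gamma_a)_{\mathbb{R}^7}=0$. Your route is shorter and conceptually cleaner, but it hides the computation inside the reduction-of-structure-group formalism; the paper's route is self-contained within the explicit octonionic linear algebra already set up (Propositions~\ref{abc}--\ref{II}) and does not require the reader to unpack what ``preserving the $\mathrm{Spin}(7)$-structure'' means at the level of frame bundles.
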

\begin{proof}
We write the connection coefficients as ${\Gamma_{ab}}^c,$ i.e. Proposition \ref{2.1.7} (1) implies that $\nabla_{V_a}V_b={\Gamma_{ab}}^cV_c$ for a local  frame $\{V_a\}.$  Since $\hbox{d}\theta_\alpha(\cdot,\cdot)=g(I_\alpha\cdot,\cdot),$ we have
\begin{align}\label{nablaI}\nabla I_\alpha|_H=-(i_{R_\beta}{\rm d}\theta_\alpha)|_H\otimes I_\beta,
\end{align}
which is equivalent to \begin{align*}
{\Gamma_{ad}}^c{I_{\alpha;b}}^d-{\Gamma_{ab}}^d{I_{\alpha;d}}^c=
-\hbox{d}\theta_\alpha(R_\beta,V_a){I_{\beta;b}}^c,
\end{align*}
i.e. $[\Gamma_a,I_\alpha]\in\mathbb{R}^7.$
On the other hand $[\Gamma_a,I_\alpha]=\left[(\Gamma_a)_{\mathfrak{so}(7)},I_\alpha\right]+
\left[(\Gamma_a)_{\mathbb{R}^7},I_\alpha\right],$
and  $\left[(\Gamma_a)_{\mathfrak{so}(7)},I_\alpha\right]\in\mathbb{R}^7$ by Proposition \ref{module}. But $\left[(\Gamma_a)_{\mathbb{R}^7},I_\alpha\right]\notin\mathbb{R}^7$ by (\ref{Nij}) if $\left(\Gamma_a\right)_{\mathbb R^7}\neq0$. So we must have $\left(\Gamma_a\right)_{\mathbb R^7}=0,$ i.e.  $\Gamma_a\in\mathfrak{so}(7).$
\end{proof}

The curvature of Biquard  connection is defined by $R(X,Y)=\nabla_X\nabla_Y-\nabla_Y\nabla_X-\nabla_{[X,Y]}$ and Ricci curvature is defined by $Ric(X,Y)= g(R(V_a,X)Y,V_a)$ for any $X,Y\in H,$ where $V_a$ is  local orthonormal basis of horizontal subspace $H.$ The scalar curvature is $s_g={\rm tr}^H Ric.$
\subsection{The Biquard connection under conformal transformations}
 When the octonionic structure $\mathbb I$ is rotated by ${\rm SO}(7),$ the Carnot-Carath\'eodory metric also satisfies (\ref{2.01})-(\ref{2,1}) for $1$-form $\Theta$ rotated. Hence, the Reeb vectors are also rotated by definition, and the Biquard connection is the same. So when consider conformal transformations, we can fixed the octonionic structure $\mathbb I.$
\begin{prop}\label{confch}
Under  the conformal change $\tilde{g}=f^{2}g_0$ on the octonionic Heisenberg group, the scalar curvature becomes \begin{align}\label{sgg}
s_{\tilde{g}}=-f^{-2}\left((2Q-2){\rm tr}^{H}(\nabla K)+{(Q-1)(Q-2)}|K|^{2}\right)=-f^{-2}\left(42{\rm tr}^{H}(\nabla K)+420|K|^{2}\right),
\end{align}
where $K:=f^{-1}{\rm d}f.$
\end{prop}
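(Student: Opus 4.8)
The plan is to compute the scalar curvature of $\tilde g = f^2 g_0$ directly in terms of the Biquard connection data of the flat model $g_0$ on $\mathscr H$, exploiting the fact (from (\ref{flat})) that for $g_0$ the connection coefficients vanish on the left-invariant frame $\{X_a\}$ and the curvature and torsion are identically zero. First I would set up an adapted orthonormal frame for $\tilde g$: since $\tilde g = f^2 g_0$, the vectors $\tilde V_a := f^{-1} X_a$ form a $\tilde g$-orthonormal horizontal frame, and I would likewise rescale the Reeb/vertical directions appropriately so that the defining properties (i)--(v) of the Biquard connection in Biquard's theorem are preserved. The key object is $K := f^{-1}\,\mathrm df$, the logarithmic differential, which is the horizontal $1$-form measuring the conformal distortion; the whole computation should be organized so that every curvature term is expressed through $K$, $\nabla K$ (Biquard-covariant derivative, which for $g_0$ reduces to ordinary horizontal differentiation since the $g_0$-connection is flat), and $|K|^2 = \sum_a (X_a f)^2/f^2$.

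Next I would derive the transformation law for the Biquard connection itself under $g \mapsto f^2 g$. The candidate formula is the sub-Riemannian analogue of the Riemannian conformal change: on horizontal vectors $X,Y\in H$,
\begin{align*}
\tilde\nabla_X Y = \nabla_X Y + (Xf/f)Y + (Yf/f)X - g(X,Y)\,\mathrm{grad}_H(\log f) + (\text{vertical correction terms}),
\end{align*}
where the vertical corrections are forced by conditions (ii) and (iv) of Biquard's theorem (the torsion must encode $-[X,Y]_V$, and the $V$-valued torsion endomorphism must lie in $\mathfrak{so}_7^\perp$). I would verify that this modified connection still preserves the metric and the ${\rm Spin}(7)$-structure — the latter is where the algebraic results of Section 2 (Propositions \ref{II} and the $\mathfrak{so}(7)$-valuedness of the connection coefficients) are used, to check that the extra terms do not push the connection form out of $\mathfrak{so}(7)\oplus\mathbb R^7$ in the wrong way. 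Uniqueness in Biquard's theorem then pins down $\tilde\nabla$ exactly.

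With $\tilde\nabla$ in hand I would compute the curvature $\tilde R(X,Y) = \tilde\nabla_X\tilde\nabla_Y - \tilde\nabla_Y\tilde\nabla_X - \tilde\nabla_{[X,Y]}$, contract to get $\widetilde{Ric}$, and then take the horizontal trace to get $s_{\tilde g}$. Because the background $g_0$ is flat with zero torsion and zero scalar curvature, every surviving term comes from derivatives and squares of $K$; the horizontal trace produces the combinatorial constants. Tracking the indices carefully, the ${\rm tr}^H(\nabla K)$ term picks up a factor that in general dimension is $(2Q-2)$ and $|K|^2$ picks up $(Q-1)(Q-2)$, which with $Q=22$ gives $42$ and $420$ as claimed in (\ref{sgg}). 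I would double-check these constants against the known CR and qc analogues (the coefficient $b = 4(Q-1)/(Q-2)$ in Theorem \ref{s-curva} and the paper \cite{Shi}), since the structure of the formula must be consistent with the conformal Laplacian normalization.

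The main obstacle I anticipate is the bookkeeping of the vertical correction terms and the non-associativity-driven algebra. Unlike the Riemannian case, here the extra terms in $\tilde\nabla$ involving $V$ are not free: they are dictated by the torsion conditions (ii) and (iv), and getting them wrong spoils the ${\rm Spin}(7)$-compatibility, hence uniqueness. Moreover, when one differentiates and contracts, cross-terms between the horizontal conformal terms and these vertical pieces appear, and simplifying them requires the identities (\ref{Nij}) and $E^\alpha E^\beta = -E^\beta E^\alpha$ from Proposition \ref{p2.4}, together with the fact that the $N^{ab}$ matrices are orthogonal to $\mathfrak{so}(7)$. The delicate point is showing that all the vertical contributions to $s_{\tilde g}$ either cancel or recombine into the clean horizontal expression in (\ref{sgg}); this is where the octonionic structure constants $\epsilon_{abcd}$ must conspire, and I expect it to be the one genuinely nontrivial algebraic check in the proof.
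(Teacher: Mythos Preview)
Your overall strategy---derive the conformal change formula for the Biquard connection, then expand the curvature on the flat background and trace---is exactly the paper's approach. The paper carries this out via Propositions \ref{H} and \ref{V} (connection change on $H$ and on $V$) and then the computation in \S3.3.

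There is, however, a genuine gap in your candidate formula for $\tilde\nabla_X Y$. You write the Riemannian-type terms $K(X)Y + K(Y)X - g(X,Y)K^\sharp$ and then lump everything else into ``vertical correction terms.'' In fact the Biquard connection preserves the splitting $H\oplus V$, so $\tilde\nabla_X Y$ is \emph{horizontal} for $X,Y\in H$; there are no vertical pieces. What is missing are additional \emph{horizontal} terms twisted by the seven almost complex structures: the paper's Proposition \ref{H} gives
\[
A_X = K(X) + \langle I_\alpha K^\sharp,X\rangle I_\alpha + K^\sharp\wedge X + I_\alpha K^\sharp\wedge I_\alpha X,
\]
the last two summands being the $I_\alpha$-twisted analogue of the Riemannian term. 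These arise because the new Reeb fields have horizontal components $r_\alpha=-2I_\alpha K^\sharp$, which changes the horizontal torsion $T_{X,Y}=\mathrm d\theta_\alpha(X,Y)R_\alpha$ by horizontal amounts (see (\ref{2.21'})). Without these $I_\alpha$-terms you cannot recover the constants: they are precisely what converts the naive $8$-dimensional trace counting into the homogeneous-dimension $Q=22$ coefficients. In the paper's calculation each of the four blocks $\langle\nabla_{X_a}(U_{X_b}X_b),X_a\rangle$, $\langle U_{X_a}(U_{X_b}X_b),X_a\rangle$, etc., picks up factors of $7$ from $\sum_\alpha$, and these assemble into $-42\,\mathrm{tr}^H(\nabla K)-420|K|^2$.

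The genuinely ``vertical'' piece you should anticipate is different: it is the term $-\langle A_{[X_a,X_b]}X_b,X_a\rangle$ coming from $[X_a,X_b]\in V$, which requires the formula for $\tilde\nabla_{R_\alpha}$ (Proposition \ref{V}). The paper shows this term vanishes identically after projecting $U_{R_\alpha}$ to $\mathfrak{so}(7)$; that cancellation is the delicate $\epsilon_{abcd}$-algebra you correctly flag, but it happens in a different place than your outline suggests.
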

To prove this proposition we need to know the transformation formulae of Reeb vector fields and the Biquard connection under the OC conformal transformation. Recall that the wedge product of 1-forms  $\phi$ and  $\psi$ is given by \begin{align}\label{345}
(\phi\wedge\psi)(X,Y):
=\phi(X)\psi(Y)-
\phi(Y)\psi(X),
  \end{align}
for any vector field $X$ and $Y.$ Then we have $\hbox{d}\phi(X,Y)=X\phi(Y)-Y\phi(X)-\phi([X,Y]).$
For $X,Y\in H,$ define  $X\wedge Y$ as the endmorphism of $H$ by $$X\wedge Y(Z):=g(X,Z)Y-g(Y,Z)X$$ for any $Z\in H.$

\begin{prop}\label{H}
The Reeb fields of $\tilde{g}=f^2g$ associate with $\tilde\theta_\alpha=f^2\theta_\alpha,\alpha=1,\cdots,7$ are the vectors
\begin{align}\label{2.17'}
\widetilde{R}_\alpha=f^{-2}(R_\alpha+r_\alpha),\quad r_\alpha=-2I_\alpha K^\sharp,
\end{align}
where the  vector $K^\sharp\in H$ is defined by $K(X):=\langle K^\sharp,X\rangle_g$ for any $X\in H.$ The Biquard connection of $\tilde g$ satisfies
\begin{equation}\begin{aligned}
\widetilde{\nabla}_X=\nabla_X+A_X,\quad {\rm for}\ X\in H,
\end{aligned}\end{equation}
with
\begin{align}\label{2.19'}
A_X=K(X)+U_X:=K(X)+\langle I_\alpha K^\sharp,X\rangle I_\alpha+K^\sharp\wedge X+I_\alpha K^\sharp\wedge I_\alpha X.
\end{align}
\end{prop}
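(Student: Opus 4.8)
The plan is to exhibit the triple $(\widetilde R_\alpha,\widetilde\nabla,I_\alpha)$ as satisfying Biquard's defining conditions for the rescaled data $(\tilde g,\tilde\theta_\alpha)$ and then invoke the uniqueness half of Biquard's theorem. First I record that the complex structures do not move: from $\mathrm d\tilde\theta_\alpha=2f^2K\wedge\theta_\alpha+f^2\,\mathrm d\theta_\alpha$ and the vanishing of $K\wedge\theta_\alpha$ on $H$ (since $\theta_\alpha|_H=0$) one gets $\mathrm d\tilde\theta_\alpha|_H=f^2\,\mathrm d\theta_\alpha|_H$, and comparing $\tilde g(\tilde I_\alpha\cdot,\cdot)=\mathrm d\tilde\theta_\alpha|_H$ with $\tilde g(I_\alpha\cdot,\cdot)=f^2\mathrm d\theta_\alpha|_H$ gives $\tilde I_\alpha=I_\alpha$, consistent with the earlier remark that $\mathbb I$ may be fixed. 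For the Reeb fields I would substitute the ansatz $\widetilde R_\alpha=f^{-2}(R_\alpha+r_\alpha)$, $r_\alpha\in H$, into the conditions of Proposition \ref{2.1.7}. The normalization $\tilde\theta_\beta(\widetilde R_\alpha)=\delta_{\alpha\beta}$ is automatic; evaluating $i_{\widetilde R_\alpha}\mathrm d\tilde\theta_\alpha$ on $Y\in H$ and using $i_{R_\alpha}\mathrm d\theta_\alpha|_H=0$ and $\mathrm d\theta_\alpha(r_\alpha,Y)=g(I_\alpha r_\alpha,Y)$ gives $-2K(Y)+g(I_\alpha r_\alpha,Y)=0$, i.e. $I_\alpha r_\alpha=2K^\sharp$, so $r_\alpha=-2I_\alpha K^\sharp$; the cross condition reduces, after the $R_\alpha$-terms cancel, to $I_\beta r_\alpha=-I_\alpha r_\beta$, which is the anticommutation of Proposition \ref{p2.4}. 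This proves (\ref{2.17'}).

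For the connection I set $A_X:=\widetilde\nabla_X-\nabla_X\in\mathrm{End}(H)$ and translate Biquard's conditions (i)--(iii) into three linear constraints, as in the qc case of \cite{Shi}. Metricity of $\widetilde\nabla$ for $\tilde g=f^2g$ together with metricity of $\nabla$ for $g$ forces the $g$-symmetric part of $A_X$ to be $K(X)\,\mathrm{Id}_H$, giving the scalar summand in (\ref{2.19'}). Preservation of the $\mathrm{Spin}(7)$-structure forces the $g$-skew part of $A_X$ to lie in $\mathfrak{so}(7)=\mathrm{span}\{I_\alpha I_\beta\}$ and not in $\mathbb R^7=\mathrm{span}\{I_\alpha\}$ (Propositions \ref{II}--\ref{module}). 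Finally, since $\widetilde T_{X,Y}=T_{X,Y}+A_XY-A_YX$ and $T_{X,Y}=-[X,Y]_V$ for $X,Y\in H$, the normalization $\widetilde T_{X,Y}=-[X,Y]_{\tilde V}$, after decomposing $TM=H\oplus\tilde V$ through $\widetilde R_\alpha$ and using $\theta_\alpha([X,Y])=-g(I_\alpha X,Y)$, becomes
$$A_XY-A_YX=-[X,Y]_{\tilde V}+[X,Y]_V=\sum_\alpha g(I_\alpha X,Y)\,r_\alpha=-2\sum_\alpha g(I_\alpha X,Y)\,I_\alpha K^\sharp.$$

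I would then verify that $U_X$ in (\ref{2.19'}) meets all three constraints. The metric constraint is immediate since each $I_\alpha$ and each $Z\wedge W$ is $g$-skew, so $U_X$ is $g$-skew and the symmetric part of $A_X$ is exactly $K(X)\,\mathrm{Id}_H$. For the torsion constraint, skew-symmetrizing $U_X$ in $(X,Y)$ collapses by direct term-by-term cancellation: all contributions cancel pairwise except those proportional to $g(I_\alpha X,Y)\,I_\alpha K^\sharp$, which combine via $g(I_\alpha Y,X)=-g(I_\alpha X,Y)$ into $-2\sum_\alpha g(I_\alpha X,Y)\,I_\alpha K^\sharp$, matching the displayed right-hand side.

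The main obstacle is the $\mathrm{Spin}(7)$-constraint, i.e. showing $U_X\in\mathfrak{so}(7)$, since the term $\langle I_\alpha K^\sharp,X\rangle I_\alpha$ appears to lie in $\mathbb R^7$. I would compute the orthogonal projection of $U_X$ onto $\mathbb R^7$ via the invariant pairing $\langle\Phi,I_\gamma\rangle=-\mathrm{tr}(\Phi I_\gamma)$ (for which $\mathfrak{so}(7)\perp\mathbb R^7$), using the alternative-algebra identity $I_\alpha I_\gamma I_\alpha=-I_\gamma$ for $\alpha=\gamma$ and $I_\alpha I_\gamma I_\alpha=I_\gamma$ for $\alpha\neq\gamma$ (from $(I_\beta)^2=-\mathrm{Id}$ and the anticommutation of Proposition \ref{p2.4}). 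The three terms of $U_X$ then contribute $-8,\ -2$ and $+10$ times $\langle I_\gamma K^\sharp,X\rangle$ to $\mathrm{tr}(U_XI_\gamma)$ (the last since the relevant signs sum to $6-1=5$ over $\alpha=1,\dots,7$), which add to zero for every $\gamma$; hence the $\mathbb R^7$-component of $U_X$ vanishes and $U_X\in\mathfrak{so}(7)$. Then $\widetilde\nabla_XI_\alpha=\nabla_XI_\alpha+[U_X,I_\alpha]\in\mathbb R^7$ by Proposition \ref{module}, so $\widetilde\nabla$ preserves the $\mathrm{Spin}(7)$-structure. With Biquard's conditions verified, uniqueness identifies $\nabla+A$ with the Biquard connection of $\tilde g$ and yields (\ref{2.19'}). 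The two delicate points are this $\mathfrak{so}(7)$-membership computation and the careful identification of the new vertical distribution $\tilde V=\mathrm{span}\{R_\alpha+r_\alpha\}$ entering the torsion step.
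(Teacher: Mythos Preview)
Your approach is correct but differs from the paper's. For the Reeb fields you do essentially what the paper does. For the connection, however, the paper does not verify the candidate $A_X$ against Biquard's axioms and invoke uniqueness; instead it \emph{derives} the formula by a Koszul-type polarization. From $\widetilde\nabla\tilde g=0$ one gets $g(A_XY,Z)+g(A_XZ,Y)=2K(X)g(Y,Z)$, and from the torsion normalization $\widetilde T_{X,Y}=-[X,Y]_{\widetilde V}$ one gets $g(A_XY,Z)-g(A_YX,Z)=2\,\mathrm d\theta_\alpha(X,Y)K(I_\alpha Z)$. Cyclically permuting the first relation and combining with the second solves for $g(A_XY,Z)$ outright, yielding (\ref{2.19'}) directly.

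This comparison shows that your $\mathrm{Spin}(7)$ step, while correctly computed (the $-8,-2,+10$ cancellation is right), is in fact redundant: the metric and horizontal--torsion conditions already determine $A_X$ on $H$ uniquely, as the paper's polarization makes explicit. So you never need to invoke the full Biquard uniqueness theorem, which would strictly speaking require exhibiting a complete connection on $TM$ satisfying all of (i)--(v), not just the horizontal piece you have written down. Your verification route works, but the paper's derivation is shorter, more self-contained, and sidesteps the trace computation you flagged as ``the main obstacle.''
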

\begin{proof}
By Proposition \ref{2.1.7}, the vector field $\widetilde R_\alpha$ is characterized by $i_{\widetilde{R}_\alpha}\hbox{d}\tilde{\theta}_\alpha|_H=0,$ $i_{\widetilde{R}_\alpha}\hbox{d}\tilde{\theta}_\beta|_H= -i_{\widetilde{R}_\beta}\hbox{d}\tilde{\theta}_\alpha|_H$ and $\tilde{\theta}_\beta(\tilde{R}_\alpha)=\delta_{\alpha\beta}.$ Since
$ f^{-2}\hbox{d}\tilde{\theta}_\alpha=\hbox{d}{\theta}_\alpha+2K\wedge\theta_\alpha, $
we have (\ref{2.17'}) immediately. As $\widetilde T_{X,Y}=-[X,Y]_{\widetilde V}=\hbox{d}{\tilde \theta}_\alpha(X,Y)\widetilde R_\alpha,$ for $X,Y\in H,$ we have
\begin{equation}\begin{aligned}\label{2.21'}
g(A_XY,Z)-g(A_YX,Z)= g\left(\widetilde{T}_{X,Y}-T_{X,Y},Z\right) = g\left(
\hbox{d}{\theta}_\alpha(X,Y){r}_\alpha,Z\right)=2\hbox{d}{\theta}_\alpha
(X,Y)K(I_\alpha Z),
\end{aligned}\end{equation}
while $\widetilde{\nabla}\widetilde{g}=0$ yields
\begin{equation*}\begin{aligned}
0=\widetilde{\nabla}_X\widetilde{g}(Y,Z)=&X(\widetilde{g}(Y,Z))-f^2g(\nabla_XY+A_XY,Z)
-f^2g(Y,\nabla_XZ+A_XZ)\\=&2f^2K(X)g(Y,Z)-f^2g(A_XY,Z)-f^2g(Y,A_XZ),
\end{aligned}\end{equation*}
i.e.
\begin{align}\label{3.7'}
g(A_XY,Z)+g(Y,A_XZ)=2K(X)g(Y,Z).
\end{align}
Alternating  $X,Y,Z,$ we have
\begin{equation}\begin{aligned}\label{3.8'}
g(A_YZ,X)+g(Z,A_YX)=2K(Y)g(Z,X),\\
g(A_ZX,Y)+g(X,A_ZY)=2K(Z)g(X,Y).
\end{aligned}\end{equation}
Take the sum of  the first two equations in (\ref{3.7'})-(\ref{3.8'})  and then minus the last one to get
\begin{equation}\begin{aligned}\label{2.25''}
g(A_XY,Z)+g(Z,A_YX)=&-2\hbox{d}{\theta}_\alpha
(X,Z)K(I_\alpha Y)-2\hbox{d}{\theta}_\alpha
(Y,Z)K(I_\alpha X)\\&+2K(X)g(Y,Z)+2K(Y)g(Z,X)-2K(Z)g(X,Y)
\end{aligned}\end{equation}
by using (\ref{2.21'}).
Then (\ref{2.19'}) is the sum of (\ref{2.21'}) and  (\ref{2.25''}).
\end{proof}
Denote by $D_{\mathfrak{so}(7)}$ and $D_{\mathbb R^7}$  the projection to $\mathfrak{so}(7)$ and $\mathbb R^7$ for $D\in\mathfrak{so}(8)$ in the decomposition  (\ref{so7}) with respect to the Killing form  $\langle\cdot,\cdot\rangle$ on $\mathfrak{so}(8):$ $\langle D,F\rangle=tr(F^*D)=\frac18\sum_{a=0}^7\langle D(V_a), F(V_a)\rangle$ for $D, F\in \mathfrak{so}(8),$ where $\{V_a\}_{a=0}^7$ is a local orthonormal basis of  $H.$ By (\ref{so7}), we have  $$D_{\mathfrak{so}(7)}=D-D_{\mathbb{R}^7},$$ for any $1$-form $D$ with value in $\mathfrak{so}(8),$ where $D_{\mathbb{R}^7}=\sum_{\alpha=1}^7D_\alpha I_\alpha$ with \begin{align}\label{r7}D_\alpha=\frac{1}{8}\sum_a\langle DV_a,I_\alpha V_a\rangle.\end{align}
\begin{prop}\label{V}
The Biquard connection $\widetilde{\nabla}$ satisfies \begin{equation}\begin{aligned}\label{2.22'}
\widetilde{\nabla}_{R_\alpha}=&\nabla_{R_\alpha}+A_{R_\alpha}:=\nabla_{R_\alpha}+
K(R_\alpha)+(U_{R_\alpha})_{\mathfrak{so}(7)},\\
\widetilde{T}_{R_\alpha+r_\alpha,X}=&{T}_{R_\alpha,X}+K(R_\alpha)X-2|K|^2I_\alpha X-
(U_{R_\alpha})_{\mathbb{R}^7}X,
\end{aligned}\end{equation}
where
\begin{align}\label{ur}
U_{R_\alpha}X=2I_\alpha\nabla_XK^\sharp+4\sum_{\beta=1}^7\langle I_\beta K^\sharp,X\rangle I_\alpha I_\beta K^\sharp-4\sum_{\beta\neq \alpha}\langle I_\beta I_\alpha K^\sharp,X\rangle I_\beta K^\sharp.
\end{align}
\end{prop}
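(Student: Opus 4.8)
The ingredients already in hand from Proposition~\ref{H} are the Reeb fields $\widetilde R_\alpha=f^{-2}(R_\alpha+r_\alpha)$ of $\tilde g$, with $r_\alpha=-2I_\alpha K^\sharp\in H$, and the horizontal part $\widetilde\nabla_X=\nabla_X+A_X$, $X\in H$, of the Biquard connection of $\tilde g$. The plan is to determine the remaining vertical data $\widetilde\nabla_{R_\alpha}$ and $\widetilde T_{R_\alpha+r_\alpha,\cdot}$ directly from Biquard's axioms (i)--(v) for $(M,\tilde g,\mathbb I)$. Since $R_\alpha+r_\alpha=f^2\widetilde R_\alpha$, the torsion $\widetilde T$ is a tensor, and $\widetilde\nabla_{r_\alpha}=\nabla_{r_\alpha}+A_{r_\alpha}$ is already known because $r_\alpha$ is horizontal, it suffices to compute $\widetilde\nabla_{R_\alpha+r_\alpha}X$ and $\widetilde T_{R_\alpha+r_\alpha,X}$ for $X\in H$ and then subtract $\widetilde\nabla_{r_\alpha}$.

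First I would feed $W=R_\alpha+r_\alpha\in\widetilde V$, $X\in H$ into $\widetilde T_{W,X}=\widetilde\nabla_W X-\widetilde\nabla_X W-[W,X]$. By axiom (i), $\widetilde\nabla_W X\in H$ while $\widetilde\nabla_X W=[X,W]_{\widetilde V}$ by (\ref{xr}) for $\tilde g$; projecting onto $H$ along $\widetilde V$ gives $\widetilde\nabla_{R_\alpha+r_\alpha}X=(\widetilde T_{R_\alpha+r_\alpha,X})_{H}+[R_\alpha+r_\alpha,X]_{H}$. Next I would rewrite the Lie bracket through the original Biquard connection $\nabla$: for $r_\alpha,X\in H$, axiom (ii) gives $[r_\alpha,X]_{H}=\nabla_{r_\alpha}X-\nabla_X r_\alpha$ with $\nabla_X r_\alpha=-2I_\alpha\nabla_X K^\sharp-2(\nabla_X I_\alpha)K^\sharp$ and $\nabla_X I_\alpha$ supplied by (\ref{nablaI}); and $[R_\alpha,X]_{H}=\nabla_{R_\alpha}X-(T_{R_\alpha,X})_{H}$ from the torsion identity of $\nabla$, while the vertical component of $[R_\alpha+r_\alpha,X]$ has to be re-expanded in the basis $\widetilde R_\beta=f^{-2}(R_\beta+r_\beta)$ rather than $R_\beta$ — using (\ref{nablaR}) and the compatibility (\ref{2,1}) — which produces extra terms proportional to the $r_\beta$ and to $K(R_\alpha)$. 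Assembling all of this, multiplying by $f^2$, subtracting $\widetilde\nabla_{r_\alpha}X=\nabla_{r_\alpha}X+A_{r_\alpha}X$, and simplifying by means of (\ref{2.17'})--(\ref{2.19'}) together with the hermitian compatibility (\ref{2.01}) (which yields $\langle I_\alpha K^\sharp,I_\alpha K^\sharp\rangle=|K|^2$), I expect to reach, after restricting the torsion difference to $H$, a single endomorphism identity of the schematic form
\begin{align*}
\bigl(\widetilde\nabla_{R_\alpha}-\nabla_{R_\alpha}-K(R_\alpha)\bigr)X\;+\;\bigl(\widetilde T_{R_\alpha+r_\alpha,X}-T_{R_\alpha,X}-K(R_\alpha)X\bigr)_{H}\;=\;-\,U_{R_\alpha}X-2|K|^2I_\alpha X ,
\end{align*}
with $U_{R_\alpha}$ exactly the endomorphism (\ref{ur}): its $2I_\alpha\nabla_X K^\sharp$-term comes from $-\nabla_X r_\alpha$, the two sums come from the $\nabla I_\alpha$-correction and the $A_{r_\alpha}$-term, and the $2|K|^2I_\alpha$-term comes out of expanding $A_{r_\alpha}$.

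It then remains to split this identity according to $\mathfrak{so}(8)=\mathfrak{so}(7)\oplus\mathbb R^7$ of (\ref{so7}). The endomorphism $X\mapsto\bigl(\widetilde\nabla_{R_\alpha}-\nabla_{R_\alpha}-K(R_\alpha)\bigr)X$ is $\mathfrak{so}(7)$-valued: by $\tilde g$- resp.\ $g$-metricity its symmetric part is forced to be $K(R_\alpha)$ times the identity (the conformal rescaling of an orthonormal frame, just as in $A_X$), while by $\mathrm{Spin}(7)$-preservation — axioms (iii), (v), and the module structure of Proposition~\ref{module} — its skew part commutes into $\mathbb R^7$ against every $I_\beta$ and hence lies in $\mathfrak{so}(7)$; whereas $X\mapsto(\widetilde T_{R_\alpha+r_\alpha,X})_{H}$ and $X\mapsto(T_{R_\alpha,X})_{H}$ lie in $\mathfrak{so}_7^\perp\cong\mathbb R^7$ by axiom (iv), as does $X\mapsto I_\alpha X$. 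Comparing the $\mathfrak{so}(7)$- and $\mathbb R^7$-components of the displayed identity then gives $\widetilde\nabla_{R_\alpha}=\nabla_{R_\alpha}+K(R_\alpha)+(U_{R_\alpha})_{\mathfrak{so}(7)}$ and $\widetilde T_{R_\alpha+r_\alpha,\cdot}-T_{R_\alpha,\cdot}=K(R_\alpha)-2|K|^2I_\alpha-(U_{R_\alpha})_{\mathbb R^7}$ on $H$, which are the two asserted formulas; one checks along the way (using $\nabla_X R=[X,R]_V$) that $T_{R,X}$ and $\widetilde T_{R,X}$ are purely horizontal, so no vertical components are left over.

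The main obstacle is bookkeeping. One has to keep the two distinct vertical complements $V=\mathrm{span}\{R_\alpha\}$ and $\widetilde V=\mathrm{span}\{\widetilde R_\alpha\}$, hence the two projections onto $H$, rigorously separated while carrying along the $\nabla I_\alpha$-corrections, and then carry out the genuinely intricate octonionic computation showing that, after all cancellations, the assembled endomorphism collapses to (\ref{ur}) and that its $\mathbb R^7$-projection together with the $2|K|^2I_\alpha$-term is precisely what the torsion acquires. This last verification rests on the relation $I_a I_b=I_\gamma-N^{ab}$ of (\ref{Nij}), the alternativity identities $(E^\beta)^2=-I_{8\times8}$ and $E^\alpha E^\beta=-E^\beta E^\alpha$ ($\alpha\neq\beta$) of Proposition~\ref{p2.4}, the $\mathfrak{so}(7)$-module structure of $\mathbb R^7$ (Proposition~\ref{module}), and the hermitian compatibility; I would expect essentially the whole write-up to consist of it.
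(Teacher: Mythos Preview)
Your proposal is correct and takes essentially the same route as the paper: write the torsion identity for $\widetilde T_{R_\alpha+r_\alpha,X}$, reduce it to a single endomorphism equation relating the unknown connection correction and the torsion difference to $U_{R_\alpha}$ and $2|K|^2I_\alpha$, and then split via axiom (iv) using the $\mathfrak{so}(7)$-versus-complement decomposition. The paper's organization is slightly tidier in two respects --- it computes $\widetilde\nabla_X r_\alpha$ directly from $\widetilde\nabla_X I_\alpha$ (via (\ref{nablaI}) for $\tilde g$) and $\widetilde\nabla_X K^\sharp$ rather than assembling $\nabla_X r_\alpha$, $A_X r_\alpha$, and the $V$-versus-$\widetilde V$ re-expansion separately, and it normalizes $\nabla I_\alpha(p)=0$ at a fixed point to kill the $\mathrm d\theta_\beta(R_\alpha,X)$-cross terms at the outset --- but the underlying computation and the decisive use of $\widetilde T_{\widetilde R_\alpha,\cdot}\in\mathfrak{so}_7^\perp$ are the same.
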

\begin{proof}
Let us write
\begin{equation}
\widetilde{\nabla}_B=
\nabla_B+K(B)+\mathscr A(B)
\end{equation}
for the connection acting on the horizontal subbundle for  $B\in V$  and some $\mathscr A\in\Lambda^1\otimes\mathfrak{gl}(8),$ where $\widetilde \nabla$ is the Biquard connection associated to $\widetilde g$ and $\widetilde V={\rm span} \{\widetilde R_\alpha\}_{\alpha=1}^7$ given by (\ref{2.17'}). This  connection preserves $f^2g $ with torsion   given by
\begin{equation}\begin{aligned}\label{2.25'}
\widetilde T_{R_\alpha+r_\alpha,X}=&\widetilde\nabla_{R_\alpha+r_\alpha}X-\widetilde \nabla_X({R_\alpha+r_\alpha})
-[R_\alpha+r_\alpha,X]\\=&\widetilde\nabla_{R_\alpha+r_\alpha}X -[R_\alpha+r_\alpha,X]_{H/\widetilde{V}}\\
=&\nabla_{R_\alpha}X+K(R_\alpha)X+\widetilde{\nabla}_{r_\alpha}X -[R_\alpha,X]_{H/\widetilde{V}}-[r_\alpha,X]_{H/\widetilde{V}}+\mathscr A(R_\alpha)X,
\end{aligned}\end{equation}
 by  (\ref{xr}) for $\widetilde \nabla$, where  $H/\widetilde{V}$ denotes the projection onto $H$ along  the direction of $\widetilde{V}.$ On the one hand,
\begin{align}
\widetilde{\nabla}_{r_\alpha}X-[r_\alpha,X]_{H/\widetilde{V}} =\widetilde{\nabla}_X{r_\alpha},
\end{align}
by the definition of $\widetilde{\nabla},$ and on the other hand
\begin{align}
\nabla_{R_\alpha}X-[R_\alpha,X]_{H/\widetilde{V}}=T_{R_\alpha,X}+[R_\alpha,X]_{H/{V}}-
[R_\alpha,X]_{H/\widetilde{V}}=T_{R_\alpha,X}-\hbox{d}\theta_\beta(R_\alpha,X)r_\beta.
\end{align}
Inserting these two identities into (\ref{2.25'}) to get \begin{align*}
\widetilde{T}_{R_\alpha+r_\alpha,X}=T_{R_\alpha,X}+K(R_\alpha)X +\widetilde{\nabla}_X{r_\alpha}-\hbox{d}\theta_\beta(R_\alpha,X)r_\beta+\mathscr A(R_\alpha)X.
\end{align*}
We can choose an orthonomal frame such that $\nabla I_\alpha(p)=0$ for any $\alpha$ at a fixed point $p,$ thus we have $\hbox{d}\theta_\alpha(R_\beta,X)=0$ for any $X\in H.$
To calculate $\widetilde\nabla_{X} r_\alpha,$ note that
\begin{equation*}\begin{aligned}
\widetilde{\nabla}_XI_\alpha=&i_{R_\alpha+r_\alpha}\left(
\hbox{d}\theta_\beta+2K\wedge\theta_\beta\right)_X  I_\beta=\left(
\hbox{d}\theta_\beta(r_\alpha,X)-2K(X)\delta_{\alpha\beta}\right) I_\beta=-2\sum_{\beta\neq \alpha}\langle I_\beta I_\alpha K^\sharp,X\rangle I_\beta,
\end{aligned}\end{equation*}
by  (\ref{nablaI}). By Proposition \ref{H}, we have
\begin{align*}
\widetilde{\nabla}_XK^\sharp={\nabla}_XK^\sharp+|K|^2X+2
\langle I_\beta K^\sharp,X\rangle I_\beta K^\sharp.
\end{align*}
Thus
\begin{align*}
\widetilde{\nabla}_X(I_\alpha K^\sharp)=I_\alpha{\nabla}_XK^\sharp+
|K|^2I_\alpha X+2\langle I_\beta K^\sharp,X\rangle I_\alpha I_\beta K^\sharp-
2\sum_{\beta\neq \alpha}\langle I_\beta I_\alpha K^\sharp,X\rangle I_\beta K^\sharp.
\end{align*}
Therefore
\begin{equation*}\begin{aligned}
\widetilde{T}_{R_\alpha+r_\alpha,X}=T_{R_\alpha,X}+K(R_\alpha)X-2|K|^2I_\alpha X-U_{R_\alpha}X+\mathscr A(R_\alpha)X,
\end{aligned}\end{equation*}
by $r_\alpha=-2I_\alpha K^\sharp,$ where $U_{R_\alpha}$ is defined in (\ref{ur}).
Since $\widetilde T_{\widetilde R_\alpha}(\cdot)$ is an the endomorphism of $H$ lying  in $\mathfrak{so}_7^\perp,$   (\ref{2.22'}) follows.
\end{proof}
 \subsection{Proof of Proposition \ref{confch}.}  Recall that $\{X_a\}_{a=0}^7$ is the standard orthonormal basis (\ref{2.14}) of $H_0$ on $\mathscr H$ and   $\nabla$ is flat, i.e. $\nabla_{X_a}X_b=0.$  Denote by  $s_{\tilde g}$ the scalar curvature of $f^2g_0$ and by $\widetilde X_a=f^{-1}X_a$ the local orthonormal basis of $\tilde g.$ By formulae in  Proposition \ref{H}, Proposition \ref{V} and direct calculation, we have
\begin{equation}\begin{aligned}\label{corf}
f^2{s}_{\widetilde g}=&f^2\left\langle \widetilde{R}(\widetilde{X}_a,\widetilde{X}_b)
\widetilde{X}_b,\widetilde{X}_a\right\rangle_{f^2g_0}=\left\langle \widetilde{R}(X_a,X_b)X_b,X_a\right\rangle_{g_0}\\
=&\left\langle \widetilde{\nabla}_{X_a}\widetilde{\nabla}_{X_b}X_b-
\widetilde{\nabla}_{X_b}\widetilde{\nabla}_{X_a}X_b-\widetilde{\nabla}_{[X_a,X_b]
}X_b,X_a\right\rangle\\=&\left\langle \widetilde{\nabla}_{X_a}\left(K(X_b)X_b+U_{X_b}X_b\right)-
\widetilde{\nabla}_{X_b}\left(K(X_a)X_b+U_{X_a}X_b\right)
-A_{[X_a,X_b]}X_b,X_a\right\rangle\\
=&\left\langle X_aK(X_b)X_b+{\nabla}_{X_a}
(U_{X_b}X_b)+U_{X_a}(U_{X_b}{X_b})-X_bK(X_a)X_b\right.\\
&-\left.{\nabla}_{X_b}(U_{X_a}X_b)-U_{X_b}(U_{X_a}{X_b})-A_{[X_a,X_b]}X_b,
X_a\right\rangle
\end{aligned}\end{equation}
where we used the flatness of $\nabla$ in the third and fourth identities. We write $\langle\cdot,\cdot\rangle$ instead of $\langle\cdot,\cdot\rangle_{g_0}$ for simplicity. Note that $\left\langle X_aK(X_b)X_b\right.$ $\left.-X_bK(X_a)X_b,X_a\right\rangle=0.$ As
\begin{equation*}\begin{aligned}
\left\langle {\nabla}_{X_a}(U_{X_b}X_b),
X_a\right\rangle=&2\left\langle I_\alpha\nabla_{X_a}K^\sharp,X_b\right\rangle\left\langle I_\alpha X_b,X_a\right\rangle+
\left\langle \nabla_{X_a}K^\sharp,X_b\right\rangle\left\langle X_b,X_a\right\rangle-8\left\langle \nabla_{X_a}K^\sharp,X_a\right\rangle\\=&-14\left\langle \nabla_{X_a}K^\sharp,X_a\right\rangle+\left\langle \nabla_{X_a}K^\sharp,X_a\right\rangle-8\left\langle \nabla_{X_a}K^\sharp,X_a\right\rangle=-21X_aK(X_a).
\end{aligned}\end{equation*}
Here and in the following we use $\nabla I_\alpha=0,\nabla X_a=0$ and $\langle X_a,X_b\rangle=\delta_{ab}$ repeatedly. We also have used the  property that  for any  fixed $a$ and $\alpha,$ there exists a  unique $b$ such that $\langle I_\alpha X_b,X_a\rangle$ nonvanishing and equal to $\pm1,$ i.e. $I_\alpha X_b=\pm X_a.$  For example, \begin{equation*}\begin{aligned}
\sum_{\alpha,\beta}\left\langle I_\alpha X_a,X_b\right\rangle\left\langle I_\beta X_b,X_a\right\rangle=-\sum_{\alpha,\beta}\left\langle  X_a,I_\alpha X_b\right\rangle\left\langle I_\beta X_b,X_a\right\rangle=-\sum_{\alpha}\left|\left\langle  X_a,I_\alpha X_b\right\rangle\right|^2=-56.
\end{aligned}\end{equation*}
Similarly, by $U_{X_a}$ being antisymmetric, we have
\begin{equation*}\begin{aligned}
&\left\langle U_{X_a}(U_{X_b}{X_b}),
X_a\right\rangle=-\left\langle U_{X_b}{X_b},U_{X_a}{X_a}\right\rangle\\=&-\left\langle2\left\langle I_\alpha K^\sharp,X_b\right\rangle I_\alpha X_b+K(X_b)X_b-8K^\sharp,2\left\langle I_\beta K^\sharp,X_a\right\rangle I_\beta X_a+K(X_a)X_a-8K^\sharp\right\rangle\\
=&-4\left\langle I_\alpha K^\sharp,X_b\right\rangle \left\langle I_\beta K^\sharp,X_a\right\rangle \left\langle I_\alpha X_b,I_\beta X_a \right\rangle -2\left\langle I_\alpha K^\sharp,X_b\right\rangle\left\langle I_\alpha X_b,X_a \right\rangle K(X_a)\\&+16\left\langle I_\alpha K^\sharp,X_b\right\rangle\left\langle I_\alpha X_b,K^\sharp\right\rangle -2\left\langle I_\beta K^\sharp,X_a\right\rangle\left\langle X_b, I_\beta X_a \right\rangle K(X_b)-K(X_b)K(X_a)\left\langle X_b,X_a\right\rangle\\&+8K(X_b)K(X_b)+16\left\langle K^\sharp,I_\beta X_a\right\rangle\left\langle I_\beta K^\sharp,X_a\right\rangle+8K(X_a)K(X_a)-64\left\langle K^\sharp,K^\sharp\right\rangle
\\=&(-196+14-112+14-1+8-112+8-64)|K|^2=-441|K|^2.
\end{aligned}\end{equation*}
Here $|K|^2=\sum_{a=0}^7\left|\left\langle K^\sharp,X_a\right\rangle\right|^2=\left\langle K^\sharp,K^\sharp\right\rangle.$ We also have
{\small\begin{equation*}\begin{aligned}
-\left\langle{\nabla}_{X_b}(U_{X_a}X_b),X_a\right\rangle=
&-\left\langle I_\alpha\nabla_{X_b}K^\sharp,X_a\right\rangle\left\langle I_\alpha X_b,X_a\right\rangle-
X_b\left(K(X_b)\right)\left\langle X_a,X_a\right\rangle+\left\langle X_a,X_b\right\rangle\left\langle \nabla_{X_b}K^\sharp,X_a\right\rangle\\&-\left\langle I_\alpha\nabla_{X_b}K^\sharp,X_b\right\rangle\left\langle I_\alpha X_a,X_a\right\rangle+\left\langle I_\alpha X_a,X_b\right\rangle\left\langle I_\alpha\nabla_{X_b}K^\sharp,X_a\right\rangle\\=&-7\left\langle \nabla_{X_a}K^\sharp,X_a\right\rangle-8X_b\left(K(X_b)\right)+
\left\langle \nabla_{X_a}K^\sharp,X_a\right\rangle-7\left\langle \nabla_{X_a}K^\sharp,X_a\right\rangle\\=&-21X_aK(X_a),
\end{aligned}\end{equation*}}
and
{\begin{equation*}\begin{aligned}
-&\left\langle U_{X_b}(U_{X_a}{X_b}),
X_a\right\rangle=\left\langle U_{X_a}{X_b},U_{X_b}{X_a}\right\rangle\\=&\left\langle\left\langle I_\alpha K^\sharp,X_a\right\rangle I_\alpha X_b+\left\langle K^\sharp,X_b\right\rangle X_a-\left\langle X_a,X_b\right\rangle K^\sharp+\left\langle I_\alpha K^\sharp,X_b\right\rangle I_\alpha X_a-\left\langle I_\alpha X_a,X_b\right\rangle I_\alpha K^\sharp,\right.\\&\left.\left\langle I_\beta K^\sharp,X_b\right\rangle I_\beta X_a+\left\langle K^\sharp,X_a\right\rangle X_b-\left\langle X_b,X_a\right\rangle K^\sharp+\left\langle I_\beta K^\sharp,X_a\right\rangle I_\beta X_b-\left\langle I_\beta X_b,X_a\right\rangle I_\beta K^\sharp
\right\rangle\\=&\left(-35+7+56+35+1-1+7-7+7-1+8+7\right.\\&\left.+56
+7+7-35-35+35-7-35-56\right)|K|^2=21|K|^2.
\end{aligned}\end{equation*}}
Here we have use for example
{\begin{equation*}\begin{aligned}
&\left\langle\left\langle I_\alpha K^\sharp,X_a\right\rangle I_\alpha X_b,\left\langle I_\beta K^\sharp,X_b\right\rangle I_\beta X_a
\right\rangle=-\left\langle K^\sharp,I_\alpha X_a\right\rangle\left\langle K^\sharp,I_\beta X_b\right\rangle\left\langle X_b, I_\alpha I_\beta X_a\right\rangle\\=&\left\langle K^\sharp,I_\alpha X_a\right\rangle\left\langle K^\sharp,I_\alpha X_b\right\rangle\left\langle X_b,X_a\right\rangle-\sum_{\alpha\neq\beta}\left\langle K^\sharp,I_\alpha X_a\right\rangle\left\langle K^\sharp,I_\beta X_b\right\rangle\left\langle X_b, I_\alpha I_\beta X_a\right\rangle\\
=&7|K|^2-42|K|^2=-35|K|^2.
\end{aligned}\end{equation*}}
By Proposition \ref{H}, Proposition \ref{V} and $-[X_a,X_b]=T_{X_a,X_b}=\sum_\alpha\hbox{d}\theta_\alpha(X_a,X_b)R_\alpha,$ we have
\begin{equation}\begin{aligned}\label{3.33}
-&\left\langle A_{[X_a,X_b]}X_b,
X_a\right\rangle=\hbox{d}\theta_\alpha(X_a,X_b)\left\langle A_{R_\alpha}X_b,
X_a\right\rangle\\=&\hbox{d}\theta_\alpha(X_a,X_b)
\left\langle\left(K(R_\alpha)+(U_{R_\alpha})_{\mathfrak{so}(7)}\right)X_b,
X_a\right\rangle\\=&\left\langle I_\alpha X_a,X_b\right\rangle\left\{\left\langle K(R_\alpha)X_b,X_a\right\rangle+
2\left\langle \left(I_\alpha\nabla K^\sharp\right)_{\mathfrak{so}(7)}(X_b),X_a\right\rangle
\right.\\&\left.+4\sum_{\beta}\left\langle \left[(I_\beta K) I_\alpha I_\beta K^\sharp\right]_{\mathfrak{so}(7)}(X_b),X_a \right\rangle -4\sum_{\beta\neq \alpha}\left\langle \left[(I_\beta I_\alpha K) I_\beta K^\sharp\right]_{\mathfrak{so}(7)}(X_b),X_a \right\rangle \right\}.
\end{aligned}\end{equation}
We claim (\ref{3.33}) vanishes. Set $\left(I_\beta K\right)(X_b):=\left\langle I_\beta K^\sharp,X_b\right\rangle.$ Then substitute the above identities into (\ref{corf}) to get (\ref{sgg}). Note that the projection of $I_\alpha\nabla K^\sharp$ to $\mathbb R^7$ is $\left(I_\alpha\nabla K^\sharp\right)_\beta I_\beta,$ then
we have
$$\left\langle I_\alpha X_a,X_b\right\rangle\left\langle \left(I_\alpha\nabla K^\sharp\right)_{\mathbb{R}^7}(X_b),X_a\right\rangle=\frac18\left\langle I_\alpha X_a,X_b\right\rangle\left\langle I_\alpha\nabla_{X_c}K^\sharp,I_\gamma X_c\right\rangle\left\langle I_\gamma X_b,X_a\right\rangle=-7{\rm tr}^H\nabla K,$$ and so \begin{equation*}\begin{aligned}
&\left\langle I_\alpha X_a,X_b\right\rangle\left\langle \left(I_\alpha\nabla K^\sharp\right)_{\mathfrak{so}(7)}(X_b),X_a\right\rangle\\=&
\left\langle I_\alpha X_a,X_b\right\rangle\left\{\left\langle \left(I_\alpha\nabla K^\sharp\right)(X_b),X_a\right\rangle-\left\langle \left(I_\alpha\nabla K^\sharp\right)_{\mathbb R^7}(X_b),X_a\right\rangle\right\}=0.
\end{aligned}\end{equation*}
 We have
{\begin{equation*}\begin{aligned}
&\left\langle I_\alpha X_a,X_b\right\rangle\left\{\sum_{\beta}\left\langle I_\beta K^\sharp,X_b\right\rangle \left\langle I_\alpha I_\beta K^\sharp,X_a\right\rangle-\sum_{\beta\neq \alpha}\left\langle I_\beta I_\alpha K^\sharp,X_b\right\rangle\left\langle I_\beta K^\sharp,X_a\right\rangle\right\}\\&=-\left\langle I_\alpha X_a,X_b\right\rangle\left\langle I_\alpha K^\sharp,X_b\right\rangle \left\langle K^\sharp,X_a\right\rangle=-7|K|^2,
\end{aligned}\end{equation*}}
and
{\begin{equation*}\begin{aligned}&\left\langle I_\alpha X_a,X_b\right\rangle\left\{\sum_\beta\left\langle \left[(I_\beta K) I_\alpha I_\beta K^\sharp\right]_{\mathbb{R}^7}(X_b),X_a \right\rangle-\sum_{\beta\neq \alpha}\left\langle \left[(I_\beta I_\alpha K) I_\beta K^\sharp\right]_{\mathbb{R}^7}(X_b),X_a \right\rangle\right\}\\=&\frac18\left\langle I_\alpha X_a,X_b\right\rangle\langle I_\beta K^\sharp,X_n\rangle\langle I_\alpha I_\beta K^\sharp,I_\gamma X_n\rangle\langle I_\gamma X_b,X_a\rangle\\&-\frac18\sum_{\beta\neq \alpha}\left\langle I_\alpha X_a,X_b\right\rangle\langle I_\beta I_\alpha K^\sharp,X_n\rangle\langle I_\beta K^\sharp,I_\gamma X_n\rangle\langle I_\gamma X_b,X_a\rangle=-7|K|^2.
\end{aligned}\end{equation*}}
The right hand side does not vanish only when $\alpha=\gamma.$
So \begin{align*}\left\langle I_\alpha X_a,X_b\right\rangle\left\{\sum_{\beta}\left\langle \left[(I_\beta K) I_\alpha I_\beta K^\sharp\right]_{\mathfrak{so}(7)}(X_b),X_a \right\rangle -\sum_{\beta\neq \alpha}\left\langle \left[(I_\beta I_\alpha K) I_\beta K^\sharp\right]_{\mathfrak{so}(7)}(X_b),X_a \right\rangle\right\}=0.\end{align*}
Thus (\ref{3.33}) vanishes. The Proposition is proved.
\qed
\subsection{The transformation formula for the  OC Yamabe operator}
Since $\nabla$ preserves $H,$ there exist $1$-forms $\omega_{a}^{b},$ such that
$\nabla_{Y}V_{a}=\omega_{a}^{b}(Y)V_{b},$ for $Y\in H,$ where $\{V_a\}$ is a local basis. Write $\omega_{a}^{b'}(V_{b})=\Gamma_{ba}^{\ \ b'}.$ Then $\nabla_{V_{a}}V_{b}
=\Gamma_{ab}^{\ \ c}V_{c}.$ For $1$-form $\omega\in\Omega(M),$
$(\nabla_{X}\omega)(Y)={X}(\omega(Y))-\omega(\nabla_{X}Y).$ The Carnot-Carath\'eodory metric $g$ induces a dual metric on $H^{*}$, denoted  by $\langle\cdot,\cdot\rangle_{g}$. Then  we   define an $L^{2}$ inner product $\langle \cdot,\cdot\rangle_{g}$ on $\Gamma(H^{*})$ by
\begin{align*}
\langle \omega,\omega'\rangle_{g}:=\int_{M}\langle \omega,\omega'\rangle\hbox{d}V_{g},
\end{align*}
where the volume form $\hbox{d}V_{g}$  is
\begin{align}\label{320}
\hbox{d}V_{g}:={\theta}_{1}\wedge\cdots\wedge
{\theta}_{7}\wedge(\hbox{d}{\theta}_{\beta})^{4},\end{align}
$\beta=1,\cdots,7,$ if we write $\Theta=({\theta}_{1},\cdots,{\theta}_{7})$ locally.

\begin{prop}\label{234}
The volume element ${\rm{d}}V_{g}$  only depends on $g$, not on $\beta$ or the choice of the $\mathbb{R}^{7}$-valued contact form
$\Theta=(\theta_{1},\cdots,\theta_{7}).$
\end{prop}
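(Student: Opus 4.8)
The plan is to identify $dV_g$ with a universal constant times the wedge of the Riemannian volume form of $(H,g)$ with the metric volume form of the annihilator (conormal) bundle $H^{\perp}\subset T^{*}M$ of $H$, both of which visibly depend only on $g$. First I would observe that in the $15$-form $(\ref{320})$ every monomial occurring in the expansion of $(\hbox{d}\theta_\beta)^{4}$ that contains a factor $\theta_\alpha$ is annihilated after wedging with $\theta_1\wedge\cdots\wedge\theta_7$; hence only the $\Lambda^{2}H^{*}$-component of $\hbox{d}\theta_\beta$ contributes, so that
\begin{align*}
dV_g=\theta_1\wedge\cdots\wedge\theta_7\wedge\omega_\beta^{4},\qquad \omega_\beta:=(\hbox{d}\theta_\beta)|_H=g(I_\beta\cdot\,,\cdot),
\end{align*}
where by $(\ref{2.01})$--$(\ref{2,1})$ the $2$-form $\omega_\beta$ is the fundamental form of the $g$-hermitian almost complex structure $I_\beta$ on the rank-$8$ bundle $H$.

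For the independence of $\beta$, I would invoke the elementary identity $\tfrac1{4!}\,\omega_J^{4}=\pm\,dV_{g,H}$, valid for any $g$-hermitian complex structure $J$ on an $8$-dimensional Euclidean space, where $dV_{g,H}$ is the (fixed) Riemannian volume form of $g|_H$ and the sign records the complex orientation of $J$. Each fibre of $\mathbb I$ is the connected set $\{a_1I_1+\cdots+a_7I_7:\sum a_\beta^{2}=1\}\cong S^{6}$, and, using $(\ref{2.01})$ together with the Clifford-type relations $I_\alpha^{2}=-\mathrm{id}$ and $I_\alpha I_\beta=-I_\beta I_\alpha$ for $\alpha\neq\beta$ coming from Proposition $\ref{p2.4}$, one checks that every element of this fibre is again a $g$-hermitian almost complex structure. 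Thus $J\mapsto\tfrac1{4!}\omega_J^{4}$ is a continuous function on a connected set taking values in the two-point set $\{\pm dV_{g,H}\}$, hence constant; in particular $\omega_\beta^{4}=\omega_1^{4}=:4!\,dV_{g,H}$ for every $\beta$, and $dV_g=4!\,\theta_1\wedge\cdots\wedge\theta_7\wedge dV_{g,H}$ is independent of $\beta$.

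For the independence of the choice of $\Theta$, suppose $\Theta'=(\theta_1',\dots,\theta_7')$ is another $\mathbb R^{7}$-valued contact form with $\ker\Theta'=H$ compatible with $(g,\mathbb I)$. Then $\theta_\beta'=\Psi_{\beta\gamma}\theta_\gamma$ for some matrix-valued function $\Psi$, and restricting $\hbox{d}\theta_\beta'=\hbox{d}\Psi_{\beta\gamma}\wedge\theta_\gamma+\Psi_{\beta\gamma}\,\hbox{d}\theta_\gamma$ to $H$, where every $\theta_\gamma$ vanishes, yields $g(I_\beta'\cdot\,,\cdot)=\Psi_{\beta\gamma}\,g(I_\gamma\cdot\,,\cdot)$, i.e. $I_\beta'=\Psi_{\beta\gamma}I_\gamma$. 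Compatibility with the same OC structure forces $\Psi$ to be $SO(7)$-valued, exactly as for the $SO(7)$-valued transition data of an OC conformal class. Consequently $\theta_1'\wedge\cdots\wedge\theta_7'=(\det\Psi)\,\theta_1\wedge\cdots\wedge\theta_7=\theta_1\wedge\cdots\wedge\theta_7$, while $\sum_\gamma\Psi_{\beta\gamma}^{2}=1$ shows each $I_\beta'$ lies in the same fibre of $\mathbb I$, so that $\bigl((\hbox{d}\theta_\beta')|_H\bigr)^{4}=4!\,dV_{g,H}$ by the previous step. Plugging these into the formula of the first paragraph shows that $dV_g$ computed from $\Theta'$ agrees with the one computed from $\Theta$.

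The part that is more than routine bookkeeping is the middle step: the normalization $\omega_J^{4}=\pm 4!\,dV_{g,H}$ together with the verification that, because the fibres of $\mathbb I$ are connected and consist entirely of $g$-hermitian almost complex structures, all admissible $I_\beta$ (and all admissible rotations $\Psi_{\beta\gamma}I_\gamma$) induce \emph{one and the same} complex orientation on $H$, so that no sign ambiguity can arise. One can also read off the $\beta$- and $\Theta$-independence of the $7$-form $\theta_1\wedge\cdots\wedge\theta_7$ conceptually: with respect to the metric on $\Lambda^{2}H^{*}$ induced by $g$, the forms $\omega_1,\dots,\omega_7$ are pairwise orthogonal of equal length, so $\theta_1\wedge\cdots\wedge\theta_7$ is, up to a universal constant and a choice of orientation, the metric volume form of the annihilator bundle $H^{\perp}$.
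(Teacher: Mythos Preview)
Your proof is correct and reaches the same conclusion as the paper, but by a genuinely different route. The paper picks a $g$-orthonormal frame $\{V_a\}$ with dual coframe $\{\theta^a\}$, writes $\hbox{d}\theta_\beta=\tfrac12 E^\beta_{ba}\theta^a\wedge\theta^b$ modulo $\theta_1,\dots,\theta_7$, and then \emph{computes directly} that $(\hbox{d}\theta_\beta)^4=4!\,\theta^0\wedge\cdots\wedge\theta^7$ for each $\beta$; the $\Theta$-independence is handled by writing $\tilde\theta_\beta=c_{\beta\alpha}\theta_\alpha$ with $(c_{\beta\alpha})\in SO(7)$ and tracking the induced $SO(8)$-change of horizontal coframe. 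Your argument replaces the matrix computation by the conceptual observation that $\tfrac{1}{4!}\omega_J^4=\pm dV_{g,H}$ for any $g$-hermitian $J$ on an $8$-plane, and then uses connectedness of the fibre $S^6\subset\mathbb I$ to pin down the sign uniformly over all $I_\beta$ (and all rotations thereof). The $\Theta$-step is then essentially the same as the paper's.

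What each approach buys: the paper's computation is entirely explicit and produces the constant $4!$ by inspection, but it silently uses that the Pfaffian of each $E^\beta$ has the \emph{same} sign---precisely the point your connectivity argument makes transparent. Your approach is cleaner and model-independent (it would work verbatim in the quaternionic contact setting with $S^2$ in place of $S^6$), and it isolates the only nontrivial content: that all admissible almost complex structures in $\mathbb I$ induce the same complex orientation on $H$. On the other hand, you lean on Proposition~\ref{p2.4} for $I_\alpha^2=-\mathrm{id}$ and $I_\alpha I_\beta=-I_\beta I_\alpha$; strictly speaking that proposition is stated for the flat model matrices $E^\beta$, so on a general OC manifold you should instead cite the defining relation~(\ref{Nij}) together with the antisymmetry of $N^{\alpha\beta}$ in $\alpha,\beta$, which yields the same Clifford relations for the abstract $I_\alpha$'s.
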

\begin{proof}
Let  $1$-forms $\{\theta^a\}$ be  the basis dual to $\{V_{a}\}.$  Since
 \begin{align*}
 \hbox{d}\theta_\beta\left(V_a,V_b\right)=g\left(I_\beta V_a,V_b\right)=g\left(E^\beta_{ca}V_c,V_b\right) =E^\beta_{ba},
 \end{align*} we have  the structure equation
\begin{align*}
\hbox{d}\theta_{\beta}=\frac12 E_{ba}^{\beta}\theta^a
\wedge\theta^b,\quad {\rm mod}\  \theta_{1},\cdots,\theta_{7},  \end{align*}
$\beta=1,\cdots,7,$ where $E^{\beta}$ is  given in (\ref{o2.12}).
It is direct to check that
\begin{align}\label{3.7}
(\hbox{d}{\theta}_{\beta})^{4}&=\left(\frac12E_{ba}^{\beta}\theta^a
\wedge\theta^b\right)^{4}
=4!\theta^0\wedge\cdots\wedge\theta^7,\quad {\rm mod}\  \theta_{1},\cdots,\theta_{7}.
\end{align}
 Let $\tilde{\Theta}=(\tilde{\theta}_{1},\cdots,\tilde{\theta}_{7})$ be another contact form satisfying $\hbox{d}\tilde{\theta}_{\beta}(X,Y)=g(I_{\beta}X,Y).$ We can write $\tilde{\theta}_{\beta}=c_{\beta\alpha}\theta_{\alpha},$ for some ${\rm SO}(7)$-valued function $(c_{\alpha\beta})$ and simultaneously $\tilde{I}_{\beta}=c_{\beta\alpha}I_{\alpha},$ for $\beta=1,\cdots,7.$ Dually, we have $\tilde{V}_{a}=k_{ab}V_{b},$ for some induced ${\rm SO}(8)$-valued function $(k_{ab}),$ and the dual basis $\{\tilde{\theta}^{a}\}$ such that
$
\tilde{\theta}^{a}=k_{ab}\theta^{b}
$
for $a=0,\cdots,7.$ In fact, we have $\tilde{\theta}^{0}\wedge\cdots\wedge\tilde{\theta}^{7}=\det(k_{ab})
{\theta}^{0}\wedge\cdots\wedge{\theta}^{7}$ and $\det(k_{ab})=1$ by $(k_{ab})\in {\rm SO}(8).$ By (\ref{3.7}), we have
\begin{equation}\label{3.77}
\begin{aligned}
\hbox{d}V_{\tilde{g}}&=\tilde{\theta}_{1}\wedge\cdots
\wedge\tilde{\theta}_{7}\wedge\left({\rm d}\tilde{\theta}_{\beta}
\right)^{4}=4!\det(c_{\alpha\beta})\theta_{1}\wedge\cdots\wedge
\theta_{7}\wedge\tilde{\theta}^{0}\wedge\cdots\wedge\tilde{\theta}^{7}   \\&=4!{\theta}_{1}\cdots\wedge
{\theta}_{7}\wedge\theta^{0}\wedge\cdots\wedge\theta^{7}=
{\theta}_{1}\cdots\wedge
{\theta}_{7}\wedge({\rm d}{\theta}_{\beta})^{4}.
\end{aligned}
\end{equation}
The proposition is proved.
\end{proof}
Denote $\hbox{d}_{b}:={\rm pr}\circ \hbox{d},$ where ${\rm pr}$ is the projection from $T^{*}M$ to $H^{*}$. We define the \emph{SubLaplacian} $\Delta_{g}$ associated to Carnot-Carath\'eodary metric $g$  by
\begin{align}\label{3.9}
\int_{M}\Delta_{g}u\cdot v\hbox{d}V_{g}=
\int_{M}\langle \hbox{d}_{b}u,\hbox{d}_{b}v\rangle\hbox{d}V_{g}
\end{align}
for $u,v\in C_{0}^{\infty}(M),$ where $\hbox{d}V_{g}$ is the volume form. 
The SubLaplacian $\Delta_{g}$ has the following expression (see Proposition 2.1 in \cite{wang2} for the qc case.)
\begin{prop}
Let $\{V_{a}\}_{a=0}^{7}$ be a local  orthonormal basis of $H.$ Then locally for $u\in C^{\infty}(M),$ we have
\begin{align}\label{deltag}
\Delta_{g}u=\left(-V_{a}V_{a}u+\Gamma_{bb}^{\ \ a}V_{a}u\right).
\end{align}
\end{prop}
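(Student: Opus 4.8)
The goal is to derive the local expression \eqref{deltag} for $\Delta_g$ from its defining variational identity \eqref{3.9}. The plan is to integrate by parts, treating the sub-Laplacian as (minus) the divergence of the horizontal gradient with respect to the Biquard connection, and then to account for the torsion terms that arise because the horizontal distribution is not integrable.

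First I would fix a local orthonormal frame $\{V_a\}_{a=0}^7$ of $H$ with dual coframe $\{\theta^a\}$, and write $\hbox{d}_b u = (V_a u)\,\theta^a$, so that $\langle \hbox{d}_b u, \hbox{d}_b v\rangle = \sum_a (V_a u)(V_a v)$. Next I would introduce the horizontal vector field $Z := \sum_a (V_a v)\, V_a$ (the horizontal gradient of $v$), so that the right-hand side of \eqref{3.9} becomes $\int_M \sum_a (V_a u)(V_a v)\,\hbox{d}V_g = \int_M (Z u)\,\hbox{d}V_g$, and then I would compute $\int_M (Z u)\,\hbox{d}V_g$ by an integration-by-parts in the direction of the horizontal vector field $Z$. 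The key computational ingredient is the first-order behaviour of the volume form $\hbox{d}V_g$ under the flow of $Z$, i.e. the divergence of $Z$; here one uses that $\hbox{d}V_g = \theta_1\wedge\cdots\wedge\theta_7\wedge(\hbox{d}\theta_\beta)^4$ is canonically defined (Proposition \ref{234}) and that the Biquard connection $\nabla$ preserves the metric and the splitting $H\oplus V$, together with $T_{X,Y}=-[X,Y]_V$ for $X,Y\in H$. Writing $Z = \sum_a f_a V_a$ with $f_a = V_a v$, the divergence with respect to $\hbox{d}V_g$ works out to $\sum_a V_a f_a - \sum_{a,b}\Gamma_{ba}{}^b f_a$: the first term is the naive one and the correction $-\sum_{a,b}\Gamma_{ba}{}^b f_a$ comes from $\nabla_{V_b}V_b$ having a $V_a$-component $\Gamma_{bb}{}^a$ — I would be careful here with the index placement in $\omega_a^b(V_b)=\Gamma_{ba}{}^b$ as set up just before Proposition \ref{234}, and also check that the vertical part of $[V_a,V_b]$, which is pure torsion, does not contribute to the divergence because $\hbox{d}V_g$ restricted appropriately is annihilated by contraction with Reeb directions paired against the top horizontal form (this is exactly the content of the structure equations \eqref{3.7}).

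Carrying this out, integration by parts gives
\begin{align*}
\int_M (Zu)\,\hbox{d}V_g = -\int_M u\,\mathrm{div}(Z)\,\hbox{d}V_g = -\int_M u\left(\sum_a V_a(V_a v) - \sum_{a,b}\Gamma_{ba}{}^b\, V_a v\right)\hbox{d}V_g,
\end{align*}
and comparing with $\int_M (\Delta_g u)\, v\, \hbox{d}V_g$ and using the symmetry of the inner product in \eqref{3.9} (swap the roles of $u$ and $v$), one reads off
$\Delta_g u = -\sum_a V_a V_a u + \sum_{a,b}\Gamma_{bb}{}^a\, V_a u$, which is \eqref{deltag}. One small point to verify is that $\Gamma_{bb}{}^a$ with the summation convention matches the coefficient appearing in $\mathrm{div}(Z)$; since the frame is orthonormal and $\nabla$ is metric, $\Gamma_{ba}{}^b = -\Gamma_{bb}{}^a$ is antisymmetric in the upper index and the subscript it is contracted against, so the two forms of the divergence term agree up to the expected sign.

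The main obstacle I anticipate is the bookkeeping in the integration by parts against the volume form $\hbox{d}V_g$: precisely showing that the torsion (the vertical brackets $[V_a,V_b]_V$) contributes nothing to $\mathrm{div}(Z)$, and that no extra terms come from the $\theta_1\wedge\cdots\wedge\theta_7$ factor when differentiating along a horizontal direction. This is where one genuinely uses the Biquard structure — properties (i) and (ii) of the Biquard connection and the structure equations — rather than just formal manipulation; everything else is routine. An alternative, perhaps cleaner, route that avoids differential-form juggling is to invoke the analogue for the qc case already cited (Proposition 2.1 in \cite{wang2}): the argument there transfers verbatim once one knows the volume form is canonical (Proposition \ref{234}) and the Biquard connection has the same structural properties, so I would at least sketch the direct proof and remark that it parallels the qc case.
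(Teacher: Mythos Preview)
Your proposal is correct and follows essentially the same approach as the paper: both arguments reduce to an integration by parts against $\hbox{d}V_g$, and the crucial step in each is showing that the vertical (torsion) part of $[V_a,V_b]$ contributes nothing---the paper phrases this as the vanishing of the term $\theta_1\wedge\cdots\wedge\hbox{d}\theta_\alpha\wedge\cdots\wedge\theta_7\wedge\theta^0\wedge\cdots\wedge\widehat{\theta^a}\wedge\cdots\wedge\theta^7$ upon contraction with the Reeb fields $R_\alpha$, which is exactly the point you flag. The only cosmetic difference is that the paper computes the formal adjoint $V_a^* = -V_a + \Gamma_{bb}{}^{\ \ a}$ of each $V_a$ separately via $\hbox{d}(i_{V_a}\hbox{d}V_g)$, whereas you bundle everything into the divergence of the gradient field $Z=\sum_a(V_a v)V_a$; these are equivalent, and your remark on the antisymmetry $\Gamma_{ba}{}^{\ \ b}=-\Gamma_{bb}{}^{\ \ a}$ is precisely what reconciles the two index conventions.
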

\begin{proof}
Let $\{\theta^{a}\}_{a=0}^{7}$ be the dual basis of $\{V_{a}\}_{a=0}^{7}$ for $H^{*}$ and $\theta^{a}|_{V}=0$ for each $a.$ Let $V_{a}^{*}$ be the formal adjoint operator of $V_{a}.$ By Stokes' formula
\begin{equation}\begin{aligned}\label{i}
\int\limits_{M}V_{a}u\cdot v\hbox{d}V_{g}=&\int\limits_{M}i_{V_{a}}\hbox{d}u\cdot v\hbox{d}V_{g}=\int\limits_{M}v\hbox{d}u\wedge i_{V_{a}}\hbox{d}V_{g}\\=&-\int\limits_{M}u\hbox{d}v\wedge i_{V_{a}}\hbox{d}V_{g}-\int\limits_{M}uv\hbox{d}( i_{V_{a}}\hbox{d}V_{g})=-\int\limits_{M}u{V_{a}}v \hbox{d}V_{g}-\int\limits_{M}uv\hbox{d}( i_{V_{a}}\hbox{d}V_{g}).
\end{aligned}\end{equation}
Here we have used identities $\int_{M}i_{V_{a}}(v\hbox{d}u\wedge\hbox{d}V_{g})=0.$
Note that $$\hbox{d}\theta^{a}=\frac12(-\Gamma_{bb'}^{\ \ \ a}+
\Gamma_{b'b}^{\ \ \ a})\theta^{b}\wedge\theta^{b'},\ {\rm mod}\ \theta_{1},\cdots,\theta_{7}.$$
Note that
\begin{equation}\begin{aligned}\label{w2.39}
\hbox{d}(i_{V_{a}}\hbox{d}V_{g})=&(-1)^{a}4!
(-1)^{\alpha+1}\theta_{1}\wedge\cdots\wedge\hbox{d}\theta_{\alpha}
\wedge\cdots\wedge\theta_{7}\wedge\theta^{0}\cdots\wedge \widehat\theta^a\wedge\cdots\wedge\theta^{7}\\
&+\sum_{b\neq a}\hbox{d}\theta^{b}\wedge i_{V_{b}}i_{V_{a}}\hbox{d}V_{g}.
\end{aligned}\end{equation}
The first term on the right side of (\ref{w2.39}) is zero since it is annihilated by the Reeb vectors $R_{\alpha}$ by $i_{R_{\alpha}}\hbox{d}\theta_{\alpha}=0.$ Since  $$\hbox{d}\theta^{b}=\frac{1}{2}(-\Gamma_{ab}^{\ \ b}+
\Gamma_{ba}^{\ \ b})\theta^{a}\wedge\theta^{b}+\frac{1}{2}(-\Gamma_{ba}^{\ \ b}+
\Gamma_{ab}^{\ \ b})\theta^{b}\wedge\theta^{a}+\cdots,$$
 we get
\begin{align}\label{ww}
\hbox{d}(i_{V_{a}}\hbox{d}V_{g})=\sum_{b\neq a}(-\Gamma_{ab}^{\ \ b}+
\Gamma_{ba}^{\ \ b})\hbox{d}V_{g}=-\sum_{b\neq a}\Gamma_{bb}^{\ \ a}\hbox{d}V_{g}.
\end{align}
So, by (\ref{i}) and  (\ref{ww}), we have $V_{a}^{*}=-V_{a}+\Gamma_{bb}^{\ \ a}.$
Therefore, (\ref{deltag}) follows.
\end{proof}

The scalar curvature $s_{\tilde{g}}$ for the metric
$\tilde{g}=e^{2h}g_0$ satisfies
\begin{align}\label{2.32}
s_{\tilde{g}}=e^{-2h}\left(42\Delta_{g_0}h
-420\sum_{a=0}^7(X_ah)^{2}\right).
\end{align}
We can also write the transformation law in the following form.
\begin{cor}\label{s3.1}
The scalar curvature $s_{\tilde{g}}$ of the Biquard connection  for $\tilde{g}=\phi^{\frac{4}{Q-2}}g_0$ on the octonionic Heisenberg group satisfies the OC Yamabe equation:
\begin{align*}
b\Delta_{g_0}\phi=
s_{\tilde{g}}\phi^{\frac{Q+2}{Q-2}},\quad b=\frac{4(Q-1)}{Q-2}=\frac{21}{5}.
\end{align*}
\end{cor}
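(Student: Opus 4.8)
The plan is to reduce this to the exponential form of the transformation law already recorded in (\ref{2.32}) by the substitution $\phi=e^{\frac{Q-2}{2}h}$, equivalently $h=\frac{2}{Q-2}\log\phi=\frac{1}{10}\log\phi$, which turns $\tilde g=e^{2h}g_0$ into $\tilde g=\phi^{\frac{4}{Q-2}}g_0$. Writing $|\nabla_0\phi|^2:=\sum_{a=0}^7(X_a\phi)^2$ and using $\Delta_{g_0}=-\sum_a X_a^2$, I would first record the derivatives of $h$ in terms of $\phi$: from $X_ah=\frac{2}{Q-2}\phi^{-1}X_a\phi$ one gets $\sum_a(X_ah)^2=\frac{4}{(Q-2)^2}\phi^{-2}|\nabla_0\phi|^2$, and, differentiating once more, $\Delta_{g_0}h=\frac{2}{Q-2}\left(\phi^{-1}\Delta_{g_0}\phi+\phi^{-2}|\nabla_0\phi|^2\right)$.

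Substituting these two identities into (\ref{2.32}) and using $42=2(Q-1)$, $420=(Q-1)(Q-2)$ yields
\begin{align*}
s_{\tilde g}=e^{-2h}\left(\frac{4(Q-1)}{Q-2}\,\phi^{-1}\Delta_{g_0}\phi+\left(\frac{4(Q-1)}{Q-2}-\frac{4(Q-1)}{Q-2}\right)\phi^{-2}|\nabla_0\phi|^2\right).
\end{align*}
The coefficients of the gradient term cancel exactly; this is the whole point of the exponent $\frac{4}{Q-2}$ in the conformal factor, as it forces $2(Q-1)\cdot\frac{2}{Q-2}=(Q-1)(Q-2)\cdot\frac{4}{(Q-2)^2}$. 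Hence only the $\Delta_{g_0}\phi$ term survives, with coefficient $b=\frac{4(Q-1)}{Q-2}=\frac{21}{5}$.

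It then remains to absorb the conformal factor: since $e^{2h}=\phi^{\frac{4}{Q-2}}$ we have $e^{-2h}\phi^{-1}=\phi^{-\frac{4}{Q-2}-1}=\phi^{-\frac{Q+2}{Q-2}}$, using $\frac{4}{Q-2}+1=\frac{Q+2}{Q-2}$. Thus $s_{\tilde g}=b\,\phi^{-\frac{Q+2}{Q-2}}\Delta_{g_0}\phi$, which rearranges to the stated OC Yamabe equation $b\Delta_{g_0}\phi=s_{\tilde g}\phi^{\frac{Q+2}{Q-2}}$. There is no genuine obstacle here; the only things to be careful about are the bookkeeping of the numerical exponents (with $Q=22$ one has $\frac{4}{Q-2}=\frac15$ and $\frac{Q+2}{Q-2}=\frac65$) and the fact that (\ref{2.32}) was itself correctly obtained from Proposition \ref{confch} by taking $K=\mathrm{d}h$, so that $\mathrm{tr}^H(\nabla K)=\sum_a X_aX_ah=-\Delta_{g_0}h$ and $|K|^2=\sum_a(X_ah)^2$ by the flatness $\nabla_{X_a}X_b=0$ in (\ref{flat}).
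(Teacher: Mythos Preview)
Your proof is correct and is precisely the derivation the paper intends: the corollary is stated immediately after (\ref{2.32}) with the remark ``We can also write the transformation law in the following form,'' so the substitution $\phi=e^{\frac{Q-2}{2}h}$ and the resulting cancellation of the gradient term is exactly what is being asserted. Your closing remark explaining how (\ref{2.32}) itself comes from Proposition~\ref{confch} via $K=\mathrm{d}h$, $\mathrm{tr}^H(\nabla K)=-\Delta_{g_0}h$ is also accurate and makes the passage fully explicit.
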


 Now let us derive the transformation formula of the OC Yamabe operator on general spherical OC manifolds. See \cite{Orsted} for such derivation for the pseudo-Riemannian case, \cite{Wang3} for the CR case and \cite{Shi} for the qc case.
\begin{prop}
Let $({M},\tilde{g},\tilde{\mathbb{I}})$ and $(M,g,\mathbb{I})$ be two  OC manifolds. Let $\tilde{g}=\phi^{\frac{4}{Q-2}}g$ for some positive smooth function $\phi$ on ${M}$. Then
\begin{align}\label{123}
  \Delta_{g}(\phi\cdot f)=\Delta_{g}\phi\cdot f+{\phi^{\frac{Q+2}{Q-2}}}\Delta_{\tilde{g}}f
\end{align}
for any smooth function $f$ on $M$.
\end{prop}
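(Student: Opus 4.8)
The plan is to split (\ref{123}) into a Leibniz rule for $\Delta_{g}$ and a conformal covariance identity for $\Delta_{\tilde g}$ acting on a single function. The Leibniz rule
$$\Delta_{g}(\phi f)=f\,\Delta_{g}\phi+\phi\,\Delta_{g}f-2\langle \hbox{d}_{b}\phi,\hbox{d}_{b}f\rangle_{g}$$
follows at once from the local expression (\ref{deltag}) for $\Delta_{g}$ together with the ordinary product rule for the vector fields $V_{a}$. Thus (\ref{123}) reduces to the covariance identity
$$\phi^{\frac{Q+2}{Q-2}}\Delta_{\tilde g}f=\phi\,\Delta_{g}f-2\langle \hbox{d}_{b}\phi,\hbox{d}_{b}f\rangle_{g},$$
whose right-hand side is exactly $\Delta_{g}(\phi f)-f\,\Delta_{g}\phi$; I would prove it by pairing both sides against an arbitrary $v\in C_{0}^{\infty}(M)$ via the defining relation (\ref{3.9}) for the two SubLaplacians.

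First I would record how the ingredients of (\ref{3.9}) scale under $\tilde g=\phi^{\frac{4}{Q-2}}g$. The distribution $H$, hence $\hbox{d}_{b}={\rm pr}\circ\hbox{d}$, is part of the OC structure and is unchanged; since $\tilde g=\phi^{\frac{4}{Q-2}}g$ on $H$, the induced dual metric on $H^{*}$ satisfies $\langle\cdot,\cdot\rangle_{\tilde g}=\phi^{-\frac{4}{Q-2}}\langle\cdot,\cdot\rangle_{g}$. For the volume form, recall from the proof of Proposition \ref{234} that $\hbox{d}V_{g}=4!\,\theta_{1}\wedge\cdots\wedge\theta_{7}\wedge\theta^{0}\wedge\cdots\wedge\theta^{7}$ for a $g$-orthonormal coframe $\{\theta^{a}\}$ of $H^{*}$; a $\tilde g$-orthonormal coframe is obtained by scaling each $\theta^{a}$ by $\phi^{\frac{2}{Q-2}}$, and a legitimate $\mathbb{R}^{7}$-valued contact form for $\tilde g$ by scaling each $\theta_{\alpha}$ by $\phi^{\frac{4}{Q-2}}$ (Proposition \ref{H}), so
$$\hbox{d}V_{\tilde g}=\phi^{\frac{7\cdot4+8\cdot2}{Q-2}}\hbox{d}V_{g}=\phi^{\frac{2Q}{Q-2}}\hbox{d}V_{g},$$
the last equality because $7\cdot4+8\cdot2=44=2Q$.

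Next I would feed these into (\ref{3.9}) for $\Delta_{\tilde g}$: for every $v\in C_{0}^{\infty}(M)$,
$$\int_{M}\Delta_{\tilde g}f\cdot v\,\hbox{d}V_{\tilde g}=\int_{M}\langle \hbox{d}_{b}f,\hbox{d}_{b}v\rangle_{\tilde g}\,\hbox{d}V_{\tilde g}=\int_{M}\phi^{2}\langle \hbox{d}_{b}f,\hbox{d}_{b}v\rangle_{g}\,\hbox{d}V_{g},$$
using $-\frac{4}{Q-2}+\frac{2Q}{Q-2}=2$. Writing $\phi^{2}\langle \hbox{d}_{b}f,\hbox{d}_{b}v\rangle_{g}=\langle \hbox{d}_{b}f,\hbox{d}_{b}(\phi^{2}v)\rangle_{g}-2\phi v\,\langle \hbox{d}_{b}f,\hbox{d}_{b}\phi\rangle_{g}$ and applying (\ref{3.9}) for $\Delta_{g}$ with test function $\phi^{2}v$, the first summand becomes $\int_{M}\Delta_{g}f\cdot\phi^{2}v\,\hbox{d}V_{g}$; since $v$ is arbitrary this gives $\phi^{\frac{2Q}{Q-2}}\Delta_{\tilde g}f=\phi^{2}\Delta_{g}f-2\phi\langle \hbox{d}_{b}f,\hbox{d}_{b}\phi\rangle_{g}$, and dividing by $\phi$ (noting $\frac{2Q}{Q-2}-1=\frac{Q+2}{Q-2}$) yields the covariance identity. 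Combining it with the Leibniz rule proves (\ref{123}). If $M$ is non-compact one simply takes $v$ supported in a chart, the statement being local.

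I do not expect a genuine obstacle here: the only step demanding care is the bookkeeping of conformal weights, above all the scaling $\hbox{d}V_{\tilde g}=\phi^{\frac{2Q}{Q-2}}\hbox{d}V_{g}$ of the volume form, which rests on Proposition \ref{234}. Once that is in hand, the conformal exponent $\frac{4}{Q-2}$ is precisely the one that makes the integrand $\phi^{2}\langle \hbox{d}_{b}f,\hbox{d}_{b}v\rangle_{g}\,\hbox{d}V_{g}$ come out, which is exactly what integration by parts for $\Delta_{g}$ consumes. An alternative, messier route would compute the transformed connection coefficients directly from $\widetilde{\nabla}_{X}=\nabla_{X}+A_{X}$ in Proposition \ref{H} and insert them into (\ref{deltag}).
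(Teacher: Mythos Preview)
Your proof is correct and follows essentially the same route as the paper: both arguments hinge on the weak definition (\ref{3.9}) of the SubLaplacians together with the scaling identities $\langle\cdot,\cdot\rangle_{\tilde g}=\phi^{-\frac{4}{Q-2}}\langle\cdot,\cdot\rangle_{g}$ and $\hbox{d}V_{\tilde g}=\phi^{\frac{2Q}{Q-2}}\hbox{d}V_{g}$, and both reduce the claim to the identity $\phi\,\hbox{d}_{b}h-h\,\hbox{d}_{b}\phi=\phi^{2}\hbox{d}_{b}(\phi^{-1}h)$ (equivalently your $\phi^{2}\hbox{d}_{b}v=\hbox{d}_{b}(\phi^{2}v)-2\phi v\,\hbox{d}_{b}\phi$). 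The only organisational difference is that you first invoke the local expression (\ref{deltag}) to get the pointwise Leibniz rule and then prove the covariance identity for $\Delta_{\tilde g}$ by pairing, whereas the paper stays entirely in the weak formulation, pairing $\Delta_{g}(\phi f)$ against a test function $h$ and recognising the residual term as $\langle \hbox{d}_{b}f,\hbox{d}_{b}(\phi^{-1}h)\rangle_{\tilde g}$; the computations are dual to one another and neither buys anything the other does not.
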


\begin{proof} Note that $\Delta_g$ and the OC Yamabe operator is independent of the choice of compatible $\tilde{\mathbb I}.$
Let $\Theta=(\theta_{1},\cdots,\theta_{7})$ be a $\mathbb{R}^{7}$-valued $1$-form associated to $({M},{g},\mathbb{I})$ and let $\tilde{\Theta}=(\tilde{\theta}_{1},\cdots,\tilde{\theta}_{7})$ be associated to $({M},\tilde{{g}},\mathbb{I})$.
For any real function $h$ on $M$, we have
\begin{equation}\label{11}
\begin{aligned}
\langle\Delta_{g}(\phi\cdot f),h\rangle_{g}
&=\langle \hbox{d}_{b}\phi\cdot f+\phi \hbox{d}_{b} f,\hbox{d}_{b}h\rangle_{g}
=\langle \hbox{d}_{b}\phi,f\cdot \hbox{d}_{b}h\rangle_{g}+
\langle \hbox{d}_{b} f,\phi\cdot \hbox{d}_{b}h\rangle_{g}\\
&=\langle \hbox{d}_{b}\phi,\hbox{d}_{b}(f\cdot h)\rangle_{g}
+\langle \hbox{d}_{b}f,\phi\cdot \hbox{d}_{b}h- h\cdot \hbox{d}_{b}\phi\rangle_{g}\\
&=\langle \Delta_{g}\phi,f\cdot h\rangle_{g}
+\langle \hbox{d}_{b}f,\phi\cdot \hbox{d}_{b}h- h\cdot \hbox{d}_{b}\phi\rangle_{g}.
\end{aligned}
\end{equation}
Let us calculate the second term in the right side of (\ref{11}). By our assumption and Proposition \ref{234}, we just need to consider $\tilde{\Theta}=\phi^{\frac{4}{Q-2}}\Theta.$ Then for a fixed $\beta$, we have
\begin{align}\label{12}
\hbox{d}\tilde{\theta}_\beta=\hbox{d}(\phi^{\frac{4}{Q-2}}\theta_\beta)
={\frac{4}{Q-2}}\phi^{\frac{6-Q}{Q-2}}\hbox{d}\phi\wedge\theta_\beta
+\phi^{\frac{4}{Q-2}}\hbox{d}\theta_\beta.
\end{align}
So, we get
$
\hbox{d}V_{\tilde{g}}
=\phi^{\frac{2Q}{Q-2}}\hbox{d}V_{g}.
$
Consequently, for $1$-forms $\omega_{1},\omega_{2}\in H^{*},$ we have
\begin{equation*}
\begin{aligned}
\langle \omega_{1},\omega_{2}\rangle_{g}=
\int_{M} \langle \omega_{1},\omega_{2}\rangle\hbox{d}V_{g}
=\langle\phi^{-2}\omega_{1},\omega_{2}\rangle_{\tilde{g}}.
\end{aligned}
\end{equation*}
Now we find that
\begin{equation*}
\begin{aligned}
\langle \hbox{d}_{b}f,\phi \cdot \hbox{d}_{b} h- h\cdot \hbox{d}_{b}\phi\rangle_{g}
&=\langle\phi^{-2} \hbox{d}_{b}f,\phi\cdot \hbox{d}_{b} h- h\cdot \hbox{d}_{b}\phi\rangle_{\tilde{g}}
=\langle \hbox{d}_{b}f,\hbox{d}_{b}(\phi^{-1}h)\rangle_{\tilde{g}}\\
&=\int \Delta_{\tilde{g}}f\cdot\phi^{-1}h\hbox{d} V_{\tilde{g}}
=\int \phi^{\frac{Q+2}{Q-2}}\Delta_{\tilde{g}}f\cdot h\hbox{d} V_{g}.
\end{aligned}
\end{equation*}
The proposition is proved.
\end{proof}
{\it Proof of Theorem \ref{s-curva}.}
By choosing an open set of the octonionic Heisenberg group with standard OC metric  as a local coordinate,  we can write $\tilde g=\phi_1^{\frac{4}{Q-2}}g_0,$ $g=\phi_2^{\frac{4}{Q-2}}g_0$ locally. Then $\tilde g=\phi^{\frac{4}{Q-2}}g$ with $\phi=\phi_1\phi_2^{-1}.$ Applying (\ref{123})
 we have
\begin{align*}
\Delta_{g_0}\phi_1=\Delta_{g_0}(\phi_2\cdot\phi_1\phi_2^{-1})=\Delta_{g_0}\phi_2\cdot \phi_1\phi_2^{-1}+\phi_2^{\frac{Q+2}{Q-2}}\Delta_{g}(\phi_1\phi_2^{-1}).
\end{align*}
Thus we have
\begin{align*}
{s_{\tilde g}}\phi_1^{\frac{Q+2}{Q-2}}={s_{g}}\phi_2^{\frac{Q+2}{Q-2}}\cdot\phi+
{b}\phi_2^{\frac{Q+2}{Q-2}}\Delta_g\phi
\end{align*}
by using Corollary \ref{s3.1}, i.e. $b\Delta_{g}\phi+s_g\phi=
s_{\tilde{g}}\phi^{\frac{Q+2}{Q-2}}$ . The theorem is proved.
\qed
\vskip 5mm
{\it Proof of Corollary \ref{L-trans}.}
By using (\ref{122}) and (\ref{123}), we have
\begin{equation*}
\begin{aligned}
L_{g}(\phi  f)&=b\Delta_{g}(\phi f)+s_{g}\phi f=b\left(\Delta_{g}\phi\cdot f+\phi^{\frac{Q+2}{Q-2}}\Delta_{\tilde{g}}f\right)+s_{g}\phi f=\phi^{\frac{Q+2}{Q-2}}\left(b\Delta_{\tilde{g}}f+s_{\tilde{g}}f\right).
\end{aligned}
\end{equation*}
The result follows.
\qed
\vskip 3mm
 Define the \emph{OC Yamabe invariant}
\begin{align}\label{4.7}
\lambda(M,g):=\inf_{u>0}
\frac{\int_{M}\left(b|\nabla_{g}u|^{2}+s_{g}
u^{2}\right)\hbox{d}V_{g}}{\left(\int_{M}u^{\frac{2Q}{Q-2}}
\hbox{d}V_{g}\right)^{\frac{Q-2}{Q}}},
\end{align}
where $|\nabla_{g}f|^{2}=\sum_{a=0}^{7}|V_af|^{2}$ if $\{V_a\}$ is a local orthogonal basis of $H$ under the CC metric $g.$ It is
an invariant  for the conformal  class of OC manifolds. There is a natural OC Yamabe problem as in the (contact) Riemannian, CR and qc  cases
(cf. e.g. \cite{IMV, Jerison, ww} and references therein). The Yamabe-type  equation on groups of Heisenberg type, including the octonionic Heisenberg group, has been studied in  \cite{GV}.

\section{The Green function of the OC Yamabe operator and conformal invariants}
\subsection{The Green function}
It is a  continuous function $G_{g}:M\times M\backslash {\rm diag}M\rightarrow \mathbb{R}$ such that
\begin{align*}
\int_{M}G_{g}(\xi,\eta)L_{g}u(\eta)\hbox{d}V_{g}(\eta)=u(\xi)
\end{align*}
for all $u\in C_{0}^{\infty}(M).$ Namely, $L_{g}G_{g}(\xi,\cdot)=\delta_{\xi}.$

The explicit form of the fundamental solution of the SubLaplacian   on H-type groups, including  the  octonionic Heisenberg groups, is known. (cf. e.g. \cite{Bonfiglioli, kaplan}). It can be checked directly as in the Appendix in \cite{Shi}.
\begin{prop}\label{1.10}
The Green function of the OC Yamabe operator $L_{0}=b\Delta_{0}$ on the octonionic Heisenberg group $\mathscr{H}$ with the pole at $\xi$ is
\begin{align*}
G_{0}(\xi,\eta):=\frac{C_{Q}}{\|\xi^{-1}\eta\|^{Q-2}},
\end{align*}
for $\xi\neq \eta,\ \xi,\eta\in \mathscr{H},$ where $\|\cdot\|$ is the norm on $\mathscr{H}$ defined by (\ref{124}) and
\begin{align}\label{146}
{C_{Q}}^{-1}=(Q+2)(Q-2)b
\int_{\mathbb{R}^{15}}\frac{|x|^{2}}{(|x|^{4}+|t|^{2}+1)^{7}}{\rm d}V_{0},
\end{align}
where ${\rm d}V_{0}$ is {Lebesgue measure}.
\end{prop}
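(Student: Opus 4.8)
The plan is to verify directly that $G_0(\xi,\eta) = C_Q \|\xi^{-1}\eta\|^{-(Q-2)}$ satisfies $L_0 G_0(\xi,\cdot) = \delta_\xi$, exploiting left-invariance of the SubLaplacian $\Delta_0$ on $\mathscr H$. Since $\Delta_0 = -\sum_{a=0}^7 X_a^2$ is left-invariant and the norm $\|\cdot\|$ in (\ref{124}) is a homogeneous gauge of degree $1$ with respect to the dilations $D_\delta$ of (\ref{dilation}), it suffices by translation to take $\xi = 0$ (the origin of $\mathscr H$) and show $L_0 G_0(0,\cdot) = b\Delta_0\bigl(C_Q N^{-(Q-2)}\bigr) = \delta_0$, where $N(x,t) = (|x|^4 + |t|^2)^{1/4}$. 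First I would check by an explicit computation using the left-invariant vector fields (\ref{2.14}) that $\Delta_0 \bigl(N^{-(Q-2)}\bigr) = 0$ away from the origin; this is the standard H-type group computation and is the content of the references \cite{Bonfiglioli, kaplan} or the Appendix of \cite{Shi}. The homogeneity degree $-(Q-2)$ is exactly the one making $N^{-(Q-2)}$ a $D_\delta$-homogeneous function of degree $2 - Q$, which is forced since $\Delta_0$ is homogeneous of degree $2$ and $\delta_0$ of degree $-Q$.

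Next I would identify the distributional constant. Because $N^{2-Q}$ is locally integrable near $0$ (the singularity has order $Q-2 < Q$) and $\Delta_0 N^{2-Q} = 0$ on $\mathscr H \setminus \{0\}$, the distribution $\Delta_0 N^{2-Q}$ is supported at the origin and, by homogeneity, equals $c\,\delta_0$ for some constant $c$. To pin down $c$, I would pair $\Delta_0 N^{2-Q}$ against a test function—most efficiently against a radial cutoff or, following the cited computation, against the Gaussian-type function $(|x|^4+|t|^2+1)^{-7} = (N^4+1)^{-7}$ appearing in (\ref{146}). Concretely one computes $\int_{\mathscr H} N^{2-Q}\, \Delta_0 \varphi\, \mathrm dV_0$ for this $\varphi$ by integration by parts (all boundary terms vanish by the decay and integrability), reducing it to the explicit Euclidean integral $\int_{\mathbb R^{15}} |x|^2 (|x|^4+|t|^2+1)^{-7}\,\mathrm dV_0$; matching this against $\varphi(0) = 1$ forces $c = b^{-1} C_Q^{-1}$ with $C_Q$ given by (\ref{146}), hence $L_0 G_0(0,\cdot) = b\,\Delta_0(C_Q N^{2-Q}) = \delta_0$.

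Finally, I would transport the identity back to a general pole $\xi$ by left translation: writing $\tau_\xi$ for left translation by $\xi$ as in (\ref{2.25}), the left-invariance of $X_a$ gives $\tau_\xi^* \Delta_0 = \Delta_0 \tau_\xi^*$ and $\tau_\xi^*(\delta_0) = \delta_\xi$, while $\|(\tau_\xi^{-1}\eta)\| = \|\xi^{-1}\eta\|$ shows $G_0(\xi,\eta) = G_0(0,\xi^{-1}\eta)$, so $L_0 G_0(\xi,\cdot) = \delta_\xi$ as required. Continuity of $G_0$ off the diagonal is immediate from the explicit formula since $\|\xi^{-1}\eta\| > 0$ there. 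The main obstacle is the computation of the constant $c$: verifying $\Delta_0 N^{2-Q} = 0$ off the origin is a direct but somewhat lengthy calculation with the vector fields (\ref{2.14}) (one uses $X_a N^4 = 4|x|^2 x_a + 4 E^\beta_{ba} x_b t_\beta$ and the antisymmetry relations of Proposition \ref{p2.4}), and then extracting the precise normalization $C_Q$ from the integration-by-parts against $(N^4+1)^{-7}$ requires care with the weak derivatives at the singularity; this is exactly the routine that \cite{Shi} carries out in its Appendix for the qc case, and the same argument applies verbatim here with $Q = 22$.
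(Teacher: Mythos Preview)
Your proposal is correct and matches the paper's treatment: the paper does not supply its own proof but simply cites \cite{Bonfiglioli,kaplan} and says the result ``can be checked directly as in the Appendix in \cite{Shi},'' which is exactly the direct verification you outline (harmonicity of $N^{2-Q}$ off the origin, homogeneity forcing a Dirac mass, left-invariance to translate the pole).

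One small correction on how the constant emerges: the function $(|x|^4+|t|^2+1)^{-7}$ is not a test function you pair $\Delta_0 N^{2-Q}$ against. Rather, one regularizes as $\Phi_\epsilon=(N^4+\epsilon^4)^{-(Q-2)/4}$ and, using the identities $\Delta_0 N^4=-4(Q+2)|x|^2$ and $|\nabla_0 N^4|^2=16N^4|x|^2$ (which appear later in the paper as \eqref{A1}--\eqref{A2}), computes
\[
\Delta_0\Phi_\epsilon=(Q-2)(Q+2)\,\epsilon^4\,\frac{|x|^2}{(N^4+\epsilon^4)^{(Q+6)/4}},
\]
the exponent $(Q+6)/4$ being exactly $7$ for $Q=22$. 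Integrating this against an arbitrary test function $\psi$, rescaling $(x,t)\mapsto(\epsilon x,\epsilon^2 t)$, and letting $\epsilon\to 0$ yields $(Q-2)(Q+2)\,\psi(0)\int_{\mathbb R^{15}}|x|^2(|x|^4+|t|^2+1)^{-7}\,\mathrm dV_0$, which is precisely where \eqref{146} comes from. The rest of your outline is entirely right.
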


\begin{prop}\label{2.1}
For a connected compact OC manifold $(M,g,\mathbb{I}),$ we have trichotomy: there exists an OC metric $\tilde{g}$ conformal to $g$  which has either positive, negative or  vanishing scalar curvature  everywhere.
\end{prop}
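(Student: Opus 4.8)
The plan is to realise the desired metric as $\tilde g=\varphi^{\frac{4}{Q-2}}g$ for a first eigenfunction $\varphi$ of the OC Yamabe operator $L_g=b\Delta_g+s_g$, and to deduce the sign of $s_{\tilde g}$ from the sign of the first eigenvalue via Theorem \ref{s-curva}.

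First I would set up the spectral theory of $L_g$. Because $g$ is a Carnot--Carath\'eodory metric, the horizontal distribution $H$ is bracket-generating of step $2$ (compatibly with $Q=22=\dim H+2\,{\rm codim}\,H=8+14$), so by (\ref{3.9})--(\ref{deltag}) the SubLaplacian $\Delta_g$ is a non-negative self-adjoint subelliptic (H\"ormander) operator on $L^2(M,\hbox{d}V_g)$, hence hypoelliptic and subject to the Folland--Stein subelliptic estimates. Perturbing by multiplication with the smooth function $s_g$, the operator $L_g$ is self-adjoint with compact resolvent on the compact manifold $M$; its spectrum is a discrete sequence $\lambda_1\le\lambda_2\le\cdots\to+\infty$ with
\begin{equation}\label{rayleighplan}
\lambda_1=\inf_{u\in C^\infty(M),\, u\not\equiv 0}\frac{\int_M\left(b|\nabla_g u|^2+s_g u^2\right)\hbox{d}V_g}{\int_M u^2\,\hbox{d}V_g}.
\end{equation}

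Next I would produce a positive smooth first eigenfunction. Using the compactness of the embedding of the horizontal Sobolev space into $L^2(M)$ on the compact $M$, the infimum in (\ref{rayleighplan}) is attained by some $\varphi$ normalised by $\int_M\varphi^2\,\hbox{d}V_g=1$, solving $L_g\varphi=\lambda_1\varphi$ weakly; since $|\nabla_g|\varphi||=|\nabla_g\varphi|$ a.e., I may take $\varphi\ge 0$. Subelliptic regularity for $b\Delta_g\varphi=(\lambda_1-s_g)\varphi$ gives $\varphi\in C^\infty(M)$, and the strong maximum principle for subelliptic operators (Bony's maximum principle, or the subelliptic Harnack inequality), applied to $b\Delta_g\varphi+(s_g-\lambda_1+C)\varphi=C\varphi\ge 0$ with $C$ large, forces $\varphi>0$ on the connected $M$. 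Setting $\tilde g=\varphi^{\frac{4}{Q-2}}g$, Theorem \ref{s-curva} then gives $s_{\tilde g}\,\varphi^{\frac{Q+2}{Q-2}}=L_g\varphi=\lambda_1\varphi$, so $s_{\tilde g}=\lambda_1\varphi^{-\frac{4}{Q-2}}$ has constant sign, equal to that of $\lambda_1$: everywhere positive, everywhere negative, or identically zero according as $\lambda_1>0$, $\lambda_1<0$, or $\lambda_1=0$. These three alternatives are mutually exclusive because the sign of $\lambda_1$ is a conformal invariant: by Corollary \ref{L-trans} the quadratic form $u\mapsto\langle L_g u,u\rangle_g=\int_M(b|\nabla_g u|^2+s_g u^2)\,\hbox{d}V_g$ is carried to $\langle L_{\tilde g}f,f\rangle_{\tilde g}$ under the substitution $u=\phi f$ when $\tilde g=\phi^{\frac{4}{Q-2}}g$, so "positive definite / negative somewhere / non-negative with nontrivial kernel" is a property of the conformal class, equivalently the sign of $\lambda_1$ coincides with that of the OC Yamabe invariant (\ref{4.7}).

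The hard part will be the analytic toolkit on the sub-Riemannian manifold $M$ rather than the geometry: H\"ormander hypoellipticity together with subelliptic Schauder/$L^p$ regularity, the compact Folland--Stein Sobolev embedding, and the strong maximum principle for $b\Delta_g+(\text{bounded})$. All of these are classical for step-$2$ Carnot--Carath\'eodory structures (H\"ormander, Folland--Stein, Bony; cf. the CR case of Jerison--Lee and the qc case of \cite{Shi}), so the main obstacle is really just checking that the Biquard-connection data enter only through bounded smooth coefficients, after which the eigenfunction argument goes through verbatim.
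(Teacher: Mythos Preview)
Your proposal is correct and follows essentially the same approach as the paper: take a first eigenfunction $\varphi>0$ of $L_g$, set $\tilde g=\varphi^{4/(Q-2)}g$, and read off $s_{\tilde g}=\lambda_1\varphi^{-4/(Q-2)}$ from the OC Yamabe equation. The paper's proof is terser, simply asserting that $\varphi>0$ and $\varphi\in C^\infty$ ``as the qc case \cite{Shi}'', whereas you spell out the subelliptic spectral theory, regularity, and maximum principle explicitly; your additional remark on the conformal invariance of the sign of $\lambda_1$ matches the paper's closing ``on the other hand'' paragraph.
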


\begin{proof}
The OC Yamabe operator $L_{g}$ is a formally self-adjoint and subelliptic differential operator. So its spectrum is real and bounded from below. Let $\lambda_{1}$ be the first eigenvalue of $L_{g}$ and let $\phi$ be an eigenfunction of $L_{g}$ with eigenvalue $\lambda_{1}$. Then $\phi>0$ and is  $C^{\infty}$ as the qc case \cite{Shi}. The scalar curvature of $(M,\tilde{g},\mathbb{I})$ with $\tilde{g}=\phi^{\frac{4}{Q-2}}g$ is $s_{\tilde{g}}=\lambda_{1}\phi^{-\frac{4}{Q-2}}$ by the OC Yamabe equation (\ref{122}). In particular, $s_{\tilde{g}}>0$ (resp. $s_{\tilde{g}}<0,$  resp. $s_{\tilde{g}}\equiv0$) if $\lambda_{1}>0$  (resp. $\lambda_{1}<0$, resp. $\lambda_{1}=0$). On the other hand, if $\hat{g}$ has scalar curvature $s_{\hat{g}}>0$ (resp. $s_{\hat{g}}<0,$ resp. $s_{\hat{g}}\equiv0$),
the first eigenvalue ${\hat{\lambda}}_{1}$ of $L_{\hat{g}}$ obviously satisfies ${\hat{\lambda}}_{1}>0$  (resp. ${\hat{\lambda}}_{1}<0$, resp.
${\hat{\lambda}}_{1}=0$).
\end{proof}
For $\xi\in\mathscr{H}$ and $\epsilon>0,$ define a {\it ball $B(\xi,\epsilon):=\{\eta\in\mathscr{H};\|\xi^{-1}\cdot\eta\|<\epsilon\}$ in the octonionic Heisenberg group.}

\begin{prop}\label{p3.7}
Let $(M,g,\mathbb{I})$ be a connected  compact OC manifold with positive scalar curvature and let $U$ be a sufficiently small open set. Then the function $G_{g}(\xi,\eta)-\rho_{g}(\xi,\eta)$ can be extended to a $C^{\infty}$ function on $U\times U$, where $\rho_{g}(\cdot,\cdot)$ is
 given by (\ref{222}).
\end{prop}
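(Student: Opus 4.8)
The plan is to exploit the fact that $M$ is spherical, so near any point $\xi$ we may identify a small open neighborhood $U$ with an open subset of the octonionic Heisenberg group $\mathscr H$, on which $g = \phi^{\frac{4}{Q-2}} g_0$ for some positive smooth function $\phi$ and $g_0$ is the standard OC metric. Under this identification the singular part $\rho_g(\xi,\eta)$ in \eqref{222} is, up to the smooth factor $\phi(\xi)^{-1}\phi(\eta)^{-1}$, exactly the Green function $G_0(\xi,\eta) = C_Q\|\xi^{-1}\eta\|^{-(Q-2)}$ of the flat OC Yamabe operator $L_0 = b\Delta_{g_0}$ described in Proposition \ref{1.10}. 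The strategy is to show that $L_g\big(\rho_g(\xi,\cdot)\big)$ differs from $\delta_\xi$ only by a function that is smooth on $U$, and then invoke elliptic (subelliptic) regularity for the subelliptic operator $L_g$ to conclude that $G_g(\xi,\cdot) - \rho_g(\xi,\cdot)$, which solves $L_g u = (\text{smooth})$ on $U$, is itself smooth.

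First I would compute, using Corollary \ref{L-trans}, the transformation law $L_g f = \phi^{-\frac{Q+2}{Q-2}} L_{g_0}(\phi f)$ applied with $f(\eta) = \rho_g(\xi,\eta) = C_Q \phi(\xi)^{-1}\phi(\eta)^{-1}\|\xi^{-1}\eta\|^{-(Q-2)}$. Since $\phi(\eta) f(\eta) = C_Q\phi(\xi)^{-1}\|\xi^{-1}\eta\|^{-(Q-2)} = \phi(\xi)^{-1} G_0(\xi,\eta)$, and $L_0 G_0(\xi,\cdot) = \delta_\xi$ as distributions on $\mathscr H$, we get in the distributional sense on $U$
\begin{align*}
L_g\big(\rho_g(\xi,\cdot)\big) = \phi^{-\frac{Q+2}{Q-2}}\,\phi(\xi)^{-1}\, L_0 G_0(\xi,\cdot) = \phi^{-\frac{Q+2}{Q-2}}(\xi)\,\phi(\xi)^{-1}\,\delta_\xi = c(\xi)\,\delta_\xi,
\end{align*}
where $c(\xi) = \phi(\xi)^{-\frac{Q+2}{Q-2}-1}$ is a positive constant for fixed $\xi$ (here $L_0$ acts only in the $\eta$ variable and the weight $\phi^{-\frac{Q+2}{Q-2}}$ evaluated against $\delta_\xi$ picks out its value at $\xi$). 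Thus $L_g\big(G_g(\xi,\cdot) - c(\xi)^{-1}\rho_g(\xi,\cdot)\big) = 0$ as distributions on $U$, away from possible normalization; after checking that the constants match (both $G_g(\xi,\cdot)$ and $\rho_g(\xi,\cdot)$ are normalized so that $L_g$ applied to them yields the same multiple of $\delta_\xi$, which is guaranteed by the explicit constant $C_Q$ in \eqref{146} and the definition of $\rho_g$), one obtains $L_g\big(G_g(\xi,\cdot) - \rho_g(\xi,\cdot)\big) = 0$ on $U\setminus\{\xi\}$ and more precisely this difference solves $L_g u = 0$ near $\xi$ in the sense of distributions.

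Then, since $L_g = b\Delta_g + s_g$ is a second-order subelliptic operator satisfying H\"ormander's bracket-generating condition (the horizontal fields $\{V_a\}$ together with their brackets span $TM$, because $H$ is a codimension-$7$ contact-type distribution with nondegenerate Levi form given by $\mathrm{d}\theta_\beta$), standard subelliptic regularity theory gives that any distributional solution of $L_g u = 0$ is $C^\infty$. Hence $G_g(\xi,\cdot) - \rho_g(\xi,\cdot)$ extends to a smooth function in a neighborhood of $\xi$. Finally, to upgrade smoothness in $\eta$ alone to joint smoothness in $(\xi,\eta)\in U\times U$, I would apply the same argument to $L_g$ acting as an operator on $U\times U$ in both variables — using that $G_g$ is symmetric (the Green function of a formally self-adjoint operator) and that the singular model $\rho_g$ depends smoothly on the parameter $\xi$ off the diagonal — together with the hypoellipticity of the product operator, or alternatively invoke the known joint smoothness of Green kernels of subelliptic operators away from the diagonal combined with the explicit subtraction of the model singularity $\rho_g$.

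The main obstacle I expect is bookkeeping: verifying carefully that the normalizing constants in the definition of $\rho_g$, in $C_Q$ from \eqref{146}, and in the distributional identity $L_0 G_0(\xi,\cdot) = \delta_\xi$ are all mutually consistent so that the difference $G_g - \rho_g$ has \emph{no} residual delta-type singularity (only then is $L_g(G_g - \rho_g)$ genuinely a smooth function rather than a smooth function plus a multiple of $\delta_\xi$), and handling the passage from regularity in one variable to joint regularity on $U\times U$ cleanly. The non-associativity of $\mathbb O$ and the algebraic complications that pervade the earlier sections of the paper do not intervene here: once the transformation formula of Corollary \ref{L-trans} and the explicit flat Green function of Proposition \ref{1.10} are in hand, the argument is essentially the same as in the CR and qc cases (cf. \cite{wang1,Shi}).
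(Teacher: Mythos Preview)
Your approach is essentially correct and shares the same key ingredients as the paper's proof: the transformation formula (Corollary~\ref{L-trans}), the explicit flat Green function (Proposition~\ref{1.10}), and subelliptic regularity. The difference is one of packaging. The paper introduces a cutoff $f\in C_0^\infty(\mathscr H)$ so that $\tilde G(\xi,\eta)=\frac{C_Q}{\|\xi^{-1}\eta\|^{Q-2}}f(\xi^{-1}\eta)$ has compact support in $\eta$; then $\phi(\xi)^{-1}\phi(\eta)^{-1}\tilde G(\xi,\eta)$ is globally defined on $M$ and satisfies $L_g(\phi(\xi)^{-1}\phi(\cdot)^{-1}\tilde G(\xi,\cdot))=\delta_\xi + (\text{smooth, compactly supported})$. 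Invertibility of $L_g$ (which holds precisely because of the scalar-positive hypothesis) then produces a global smooth correction $G_2(\xi,\cdot)$, and $G_g=\phi^{-1}\phi^{-1}\tilde G+G_2$ is the Green function. Joint smoothness is obtained by differentiating the defining equation for $G_2$ in $\xi$. Thus the paper simultaneously \emph{constructs} $G_g$ and proves the smoothness of $G_g-\rho_g$.

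Your argument instead assumes $G_g$ exists, computes $L_g\rho_g(\xi,\cdot)=\delta_\xi$ on $U$ directly (and yes, once you convert the Dirac distribution between the measures $\mathrm dV_0$ and $\mathrm dV_g=\phi^{\frac{2Q}{Q-2}}\mathrm dV_0$, the constant $c(\xi)$ you worry about becomes exactly $1$), and then applies hypoellipticity of $L_g$ to the local equation $L_g(G_g-\rho_g)=0$ on $U$. This is more direct, but it presupposes existence of $G_g$, which the paper's cutoff construction furnishes as a byproduct; you should note that scalar positivity is exactly what makes $L_g$ invertible (via Proposition~\ref{2.1}) and hence guarantees existence. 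Your treatment of joint smoothness in $(\xi,\eta)$ is somewhat sketchy; the paper's device of differentiating the global equation $L_gG_2(\xi,\cdot)=-\phi(\xi)^{-1}\phi(\cdot)^{-\frac{Q+2}{Q-2}}G_1(\xi,\cdot)$ with respect to $\xi$ is cleaner than invoking a product operator or appealing to unnamed general theory.
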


\begin{proof}
Suppose that $\bar{U}\subset \tilde{U}\subset \mathscr{H}$ and $g=\phi^{\frac{4}{Q-2}}g_{0}$ on $\tilde{U}$. We choose a sufficiently small $\rho$ such that $B(\xi,\rho)\subset\tilde{U}$ for any $\xi\in U$. We can construct the Green function as follows. For $\xi, \eta\in U$, define
\begin{align*}
\tilde{G}(\xi, \eta)=\tilde{G}(\xi^{-1}\eta),
\end{align*}
where $\tilde{G}$ is the cut-off fundamental solution of $L_{0}=-{b}\sum_{a=0}^{7}X_a^2,$ i.e.  $\tilde{G}(\tilde{\eta})=\frac{C_{Q}}{\|\tilde{\eta}\|^{Q-2}}
f(\tilde{\eta})$  for $\tilde{\eta}\in \mathscr{H}$ with $f\in C_{0}^{\infty}(\mathscr{H})$  satisfying $f\equiv 1 $ on $B(0,\frac{\rho}{2})$ and $f\equiv 0$ on $B(0,{\rho})^{c}$.
Then,
\begin{equation}\label{3.30}
\begin{aligned}
L_{0}\tilde{G}(\tilde{\eta})=\delta_{0}-{b}X_a
\left(\frac{C_{Q}}{\|\tilde{\eta}\|^{Q-2}}\right)X_af(\tilde{\eta})+\frac{C_{Q}}
{\|\tilde{\eta}\|^{Q-2}}L_{0}f(\tilde{\eta})=:\delta_{0}+\tilde{G}_{1}(\tilde{\eta})
\end{aligned}
\end{equation}
by $X_af\equiv0$ on $B(0,\frac{\rho}{2})$. Here $\delta_{0}$ is the Dirac function at the origin with respect to  the measure $\hbox{d}V_{0}$ and $\tilde{G}_{1}$ is defined by the last equality in (\ref{3.30}).
Set ${G}_{1}(\xi,\eta):=\tilde{G}_{1}(\xi^{-1}\eta)$ for $\xi,\eta\in U$. Then, ${G}_{1}(\xi,\eta)\in C_0^{\infty}(\widetilde{U}\times \widetilde{U})$ and for each $\xi\in U$, ${G}_{1}(\xi,\cdot)$ can be naturally extended to a smooth function on $M$. By
transformation law (\ref{o121}) and left invariance of $X_a$, we find that
\begin{equation*}
\begin{aligned}
L_{g}\left(\phi(\xi)^{-1}\phi(\cdot)^{-1}{\tilde{G}}(\xi,\cdot)\right)
&=\phi(\xi)^{-1}\phi(\cdot)^{-\frac{Q+2}{Q-2}}L_{0}{\tilde{G}}(\xi,\cdot)
=\phi(\xi)^{-1}\phi(\cdot)^{-\frac{Q+2}{Q-2}}\left(\delta_{0}(\xi^{-1}\cdot)+
{G}_{1}(\xi,\cdot)\right)\\ &=\delta_{\xi}+\phi(\xi)^{-1}\phi(\cdot)^{-\frac{Q+2}{Q-2}}{G}_{1}(\xi,\cdot),
\end{aligned}
\end{equation*}
on $U$ for $\xi\in U$ , where $\delta_{\xi}$ is the Dirac function at point $\xi$  with respect to  the measure $\hbox{d}V_{g}=
\phi^{\frac{2Q}{Q-2}}\hbox{d}V_{0}$. Now  set
\begin{align}\label{311}
G(\xi,\eta):=\phi(\xi)^{-1}\phi(\eta)^{-1}{\tilde{G}}(\xi,\eta)+{G}_{2}
(\xi,\eta)
\end{align}
for ${\eta}\in M$, where ${G}_{2}(\xi,\eta)$ satisfies
\begin{align}\label{3.32}
L_{g}{G}_{2}(\xi,\cdot)
=-\phi(\xi)^{-1}\phi(\cdot)^{-\frac{Q+2}{Q-2}}{G}_{1}(\xi,\cdot).
\end{align}
$G_{2}(\xi,\cdot)$ exists since $L_{g}$ is invertible in $L^{2}(M)$. $G_{2}(\xi,\cdot)\in C^{\infty}(M)$ for fixed $\xi\in U$ by the subelliptic  regularity of $L_{g}$. $G_{2}(\cdot,\eta)$ is also in $C^{\infty}(U)$ by differentiating (\ref{3.32}) with respect to the variable $\xi$ repeatedly. Now we have $L_{g}G(\xi,\cdot)=\delta_{\xi}$, i.e.  $G(\xi,\eta)$ is the Green function  of $L_{g}$. By  (\ref{311}),  $G_{g}(\xi,\eta)-\rho_{g}(\xi,\eta)\in C^{\infty}(U\times U).$
\end{proof}

We have the  following transformation formula of the Green  functions under conformal OC transformations.
\begin{prop}\label{p3.8}
Let $(M,g,\mathbb{I})$ be a connected, compact, scalar positive OC manifold  and $G_{g}$ be the Green function of the OC Yamabe operator $L_{g}.$ Then
\begin{align}\label{32}
G_{\tilde{g}}(\xi,\eta)=\frac{1}{\phi(\xi)\phi(\eta)}G_{g}(\xi,\eta)
\end{align}
is the Green function of the OC Yamabe operator $L_{\tilde{g}}$ for $\tilde{g}=\phi^{\frac{4}{Q-2}}g.$
\end{prop}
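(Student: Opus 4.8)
\textit{Proof proposal.} The plan is to verify directly that the function defined by the right-hand side of (\ref{32}) satisfies the defining integral identity of the Green function of $L_{\tilde g}$, using only two facts already at our disposal: the transformation formula for the OC Yamabe operator from Corollary \ref{L-trans}, namely $L_{\tilde g}f=\phi^{-\frac{Q+2}{Q-2}}L_g(\phi f)$ for $\tilde g=\phi^{\frac{4}{Q-2}}g$, and the transformation of the volume form $\hbox{d}V_{\tilde g}=\phi^{\frac{2Q}{Q-2}}\hbox{d}V_g$ established in the proof of Theorem \ref{s-curva}. Existence of $G_g$ itself is guaranteed by Proposition \ref{p3.7} (scalar positivity), and $L_{\tilde g}$ is likewise invertible on $L^2(M)$, so the Green function of $L_{\tilde g}$ is unique; hence it suffices to check the defining property.

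First I would fix $u\in C_0^\infty(M)$ and observe that $\phi u\in C_0^\infty(M)$ as well, since $\phi$ is smooth and strictly positive on the compact manifold $M$. Then I would compute
\begin{align*}
\int_M \frac{G_g(\xi,\eta)}{\phi(\xi)\phi(\eta)}\,L_{\tilde g}u(\eta)\,\hbox{d}V_{\tilde g}(\eta)
=\frac{1}{\phi(\xi)}\int_M G_g(\xi,\eta)\,\phi(\eta)^{-1-\frac{Q+2}{Q-2}+\frac{2Q}{Q-2}}\,L_g(\phi u)(\eta)\,\hbox{d}V_g(\eta),
\end{align*}
substituting $L_{\tilde g}u=\phi^{-\frac{Q+2}{Q-2}}L_g(\phi u)$ and $\hbox{d}V_{\tilde g}=\phi^{\frac{2Q}{Q-2}}\hbox{d}V_g$. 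The key arithmetic is that the exponent of $\phi(\eta)$ vanishes: $-1-\frac{Q+2}{Q-2}+\frac{2Q}{Q-2}=\frac{-(Q-2)-(Q+2)+2Q}{Q-2}=0$. Consequently the integral collapses to $\frac{1}{\phi(\xi)}\int_M G_g(\xi,\eta)\,L_g(\phi u)(\eta)\,\hbox{d}V_g(\eta)=\frac{(\phi u)(\xi)}{\phi(\xi)}=u(\xi)$, which is exactly the defining property of the Green function of $L_{\tilde g}$ with pole at $\xi$.

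To finish, I would note that the function in (\ref{32}) is continuous on $M\times M\backslash{\rm diag}\,M$, being the product of the continuous off-diagonal function $G_g$ with the smooth nowhere-vanishing factor $\phi(\xi)^{-1}\phi(\eta)^{-1}$, so it is indeed of the required type, and by uniqueness it is the Green function $G_{\tilde g}$. There is essentially no obstacle: the whole content is the exponent bookkeeping in the displayed identity, and the only point that requires a moment's care is applying the conformal covariance of $L$ in the correct direction, so that one integrates $G_g$ against $L_g(\phi u)$ rather than against $\phi\,L_g u$ — this is precisely what Corollary \ref{L-trans} delivers, together with the matching weight in $\hbox{d}V_{\tilde g}$. \qed
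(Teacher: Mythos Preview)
Your proof is correct and follows essentially the same approach as the paper: both arguments substitute the transformation law $L_{\tilde g}u=\phi^{-\frac{Q+2}{Q-2}}L_g(\phi u)$ and the volume relation $\hbox{d}V_{\tilde g}=\phi^{\frac{2Q}{Q-2}}\hbox{d}V_g$ into the defining integral, observe that the powers of $\phi(\eta)$ cancel, and conclude by uniqueness of the Green function. Your version is slightly more explicit about the exponent bookkeeping and the continuity/uniqueness remarks, but there is no substantive difference.
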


\begin{proof}
By   the transformation law (\ref{o121}), we find that
\begin{equation*}
\begin{aligned}
\int_{M}\frac{G_{g}(\xi,\eta)
L_{\tilde{g}}u(\eta)}{\phi(\xi)\phi(\eta)} \hbox{d}V_{\tilde{g}}&=\frac{1}{\phi(\xi)}\int_{M}\frac{1}{\phi(\eta)}
G_{g}(\xi,\eta)\phi(\eta)^{-\frac{Q+2}{Q-2}}L_{g}(\phi u)(\eta)
\phi(\eta)^{\frac{2Q}{Q-2}}\hbox{d}V_{g}\\
&=\frac{1}{\phi(\xi)}\int_{M}G_{g}(\xi,\eta)L_{g}(\phi u)(\eta)\hbox{d}V_{g}=u(\xi)
\end{aligned}
\end{equation*}
for any $u\in C_{0}^{\infty}(M)$. The proposition follows form the uniqueness of the Green function.
\end{proof}

\subsection{An invariant tensor on a scalar positive  OC manifold}
{\it Proof of Theorem \ref{can-g}.}
We will verify that $\mathcal{A}_{g}$ is independent of the choice of local coordinates and $\mathcal{A}_{g}^{2}g$ is independent of the choice of $g$ in the conformal class $[g]$. Suppose $\tilde{g}=\Phi^{\frac{4}{Q-2}}g$. Let $U\subset M$ be an open set and let $\rho:U\rightarrow V\subset\mathscr{H}$ and $\tilde{\rho}:U\rightarrow \tilde{V}\subset\mathscr{H}$ be two coordinate charts such that
\begin{equation*}
\begin{aligned}
g=\rho^{*}\left(\phi_{1}^{\frac{4}{Q-2}}g_{0}\right),\quad
\tilde{g}=\tilde{\rho}^{*}\left(\phi_{2}^{\frac{4}{Q-2}}g_{0}\right),
\end{aligned}
\end{equation*}
for two positive function $\phi_{1}$ and $\phi_{2}$. Then, $f=\tilde{\rho}\circ\rho^{-1}: V\rightarrow \tilde{V}$  and
\begin{align}\label{fg0}
\left.f^{*}g_{0}\right|_{\xi'}=\phi^{\frac{4}{Q-2}}(\xi')
\left.g_{0}\right|_{\xi'}\quad {\rm with} \ \phi(\xi')=\phi_{1}(\xi')\phi_{2}^{-1}(f(\xi'))\Phi(\rho^{-1}(\xi')),
\end{align}
for $\xi'\in V$. We claim the following the transformation law of the Green function on the octonionic  Heisenberg group under a conformal OC transformation:
\begin{align}\label{14}
\frac{1}{\|f(\xi')^{-1}f(\eta')\|^{Q-2}}=\frac{1}{\phi(\xi')\phi(\eta')}
\cdot\frac{1}{\|\xi'^{-1}\eta'\|^{Q-2}},
\end{align}
for any $\xi', \eta'\in V.$
Apply this to $\xi'=\rho(\xi), \eta'=\rho(\eta)$ and $f=\tilde{\rho}\circ\rho^{-1}$ to get
\begin{align*}
\frac{1}{\|\tilde{\rho}(\xi)^{-1}\tilde{\rho}(\eta)\|^{Q-2}}=
\frac{1}{\phi(\rho(\xi))}\frac{1}{\phi(\rho(\eta))}
\frac{1}{\|\rho(\xi)^{-1}\rho(\eta)\|^{Q-2}}
\end{align*}
and so
\begin{equation*}
\begin{aligned}
\mathcal{A}_{\tilde{g}}(\xi)&=\lim_{\eta\rightarrow \xi}\left|G_{\tilde{g}}(\xi,\eta)-\frac{1}{
\phi_{2}(\tilde{\rho}(\xi))\phi_{2}(\tilde{\rho}(\eta))}\cdot\frac{C_{Q}}{\|
\tilde{\rho}(\xi)^{-1}\tilde{\rho}(\eta)\|^{Q-2}}\right|^{\frac{1}{Q-2}}\\
&=\lim_{\eta\rightarrow\xi}\left|\frac{G_{g}(\xi,\eta)}{\Phi(\xi)\Phi(\eta)}-\frac{1}{
\Phi(\xi)\Phi(\eta)\phi_{1}(\rho(\xi))\phi_{1}(\rho(\eta))}\cdot\frac{C_{Q}}{\|\rho(\xi)^{-1}
\rho(\eta)\|^{Q-2}}\right|^{\frac{1}{Q-2}}\\
&=\Phi^{-\frac{2}{Q-2}}(\xi)\lim_{\eta\rightarrow\xi}\left|G_{g}
(\xi,\eta)-\frac{1}{\phi_{1}({\rho}(\xi))\phi_{1}
(\rho(\eta))}\cdot\frac{C_{Q}}{\|\rho(\xi)^{-1}\rho(\eta)\|^{Q-2}}\right|^{\frac{1}{Q-2}}=\Phi^{-\frac{2}{Q-2}}(\xi)\mathcal{A}_{g}(\xi).
\end{aligned}
\end{equation*}
Consequently, we have $\mathcal{A}_{\tilde{g}}^{2}\tilde{g}=\mathcal{A}_{g}^{2}{g}.$

It remains to check (\ref{14}).
By OC Liouville-type Theorem \ref{liou}, $f$ is a restriction to $V$ of an OC automorphism of $\mathscr{H}$, denoted  also by $f$. By the transformation law (\ref{o121}), for functions ${\tilde{\phi}}:=\phi\circ f^{-1},\tilde{u}:=u\circ f^{-1}$ on $\tilde{V},$ we have
\begin{equation}
\begin{aligned}\label{o15}
\left.L_{0}\left(\tilde{\phi}^{-1}\tilde{u}\right)\right|_{f(\eta')}&=
\left.L_{g}\left(\phi^{-1}u\right)\right|_{\eta'}=
\phi^{-\frac{Q+2}{Q-2}}(\eta')
\left.L_{0}(u)\right|_{\eta'},\\
\left.f^{*}\hbox{d}V_{0}\right|_{f(\eta')}&
=\phi^{\frac{2Q}{Q-2}}(\eta')\left.\hbox{d}V_{0}\right|_{\eta'}.
\end{aligned}
\end{equation}
 The first identity follows from (\ref{fg0}) and  the fact that the OC Yamabe operator is independent of the choice of coordinate charts. Then, by (\ref{o15}) and taking transformation $f(\eta')\rightarrow\hat{\eta}$, we find that for any $u\in C_{0}^{\infty}(\mathscr{H})$
\begin{equation*}
\begin{aligned}
\int_{\mathscr{H}}\frac{C_{Q}\phi(\xi')
\phi(\eta')}{\|f(\xi')^{-1}f(\eta')\|^{Q-2}}
L_{0}u(\eta')\hbox{d}V_{{0}}(\eta')&=\int_{\mathscr{H}}
\frac{C_{Q}\phi(\xi')}
{\|f(\xi')^{-1}f(\eta')\|^{Q-2}}
\left.L_{0}\left({\tilde{\phi}}^{-1}\tilde{u}\right)\right|_{f(\eta')}
f^{*}\hbox{d}V_{0}(\eta')\\&=\int_{\mathscr{H}}
\frac{C_{Q}\phi(\xi')}{\|f(\xi')^{-1}\hat{\eta}\|^{Q-2}}
\left.L_{0}\left({\tilde{\phi}}^{-1}\tilde{u}\right)\right|_{\hat{\eta}}
\hbox{d}V_{0}(\hat{\eta})=u(\xi').
\end{aligned}
\end{equation*}
Now by the uniqueness of the Green  function of $L_{0}$, we find that
$$G_{0}(\xi',\eta')=\frac{C_{Q}\phi(\xi')\phi(\eta')}
{\|f(\xi')^{-1}f(\eta')\|^{Q-2}}.$$ Thus, (\ref{14}) follows. The theorem is proved.
\qed
\vskip 5mm
See \cite{Leutwiler} for the identity (\ref{14}) on the Euclidean space and see \cite{wang1} on the Heisenberg group and \cite{Shi} on the quaternionic Heisenberg group. The OC positive mass  conjecture implies that $\mathcal{A}_{g}$ is non-vanishing. Then $\mathcal{A}_{g}^{2}g$ is a conformally invariant OC metric. This invariant metric was
given by Habermann-Jost \cite{habermann, habermann1}  for  the locally conformally flat case, by the second author for  the  CR case \cite{wang1} and by us \cite{Shi} for the qc case, respectively.

\subsection{The connected sum of two scalar positive OC manifolds}
Let $(M,g,\mathbb{I})$ be a  OC manifold of dimension $15$ with two punctures $\eta_{1},\eta_{2}$, or disjoint union of two connected OC manifolds $(M_{(i)},g_{(i)},\mathbb{I}_{(i)})$  with one puncture $\eta_{i}\in M_{(i)}$ each, $i=1,2.$ Let $U_{1}$ and $U_{2}$ be two disjoint neighborhoods of $\eta_{1}$ and $\eta_{2}$, respectively. Let
\begin{align}\label{265}
\psi_{i}:U_{i}\rightarrow B(0,2),\quad i=1,2,
\end{align}
be local coordinate charts such that $\psi_{i}(\eta_{i})=0$.
For $t<1$, define
\begin{equation*}
\begin{aligned}
U_{i}(t,1):=\left\{\eta\in U_{i};t<\|\psi_{i}(\eta)\|<1\right\},\quad
U_{i}(t):=\left\{\eta\in U_{i};\|\psi_{i}(\eta)\|<t\right\},
\end{aligned}
\end{equation*}
$i=1,2.$ For any $t\in(0,1), A\in {\rm Spin}(7)$, we can form a new  OC manifold $M_{t,A}$ by removing the closed balls $\overline{U_{i}(t)},i=1,2,$ and gluing  $U_{1}(t,1)$ with $U_{2}(t,1)$ by the conformal OC  mapping $\Psi_{t,A}:U_{1}(t,1)\rightarrow
U_{2}(t,1)$ defined by
\begin{align}\label{2.60}
\Psi_{t,A}(\eta)=\psi_{2}^{-1}\circ D_{t}\circ R\circ {A}\circ\psi_{1}(\eta), \ {\rm for} \ \eta\in U_{1}(t,1),
\end{align}
where $R:\{\zeta\in \mathscr{H};t<\|\zeta\|<1\}\rightarrow\{\zeta\in \mathscr{H};1<\|\zeta\|<\frac{1}{t}\}$  is the inversion  in  (\ref{44}). Note that $\Psi_{t,A}$ is a conformal OC mapping $U_{1}(t,1)$ to $U_{2}(t,1),$ which identifies the inner boundary of $U_{1}(t,1)$ with the outer boundary $U_{2}(t,1)$ and vise versa.
Let $\pi_{t,A}:(M_{1}\backslash \overline{U_{1}(t)})\cup (M_{2}\backslash \overline{U_{2}(t)})\rightarrow M_{t,A}$
be a canonical projection.  We call $M_{t,A}$ the \emph{connected sum} of $M_{1}$ and $M_{2}$. We denote this  OC manifold by $(M_{t,A},g,\mathbb{I}_{t,A})$, where $g$ is a metric in the conformal class. As in the locally conformally case, the connected sums are expected to be not isomorphic for some different choices of $t,A$ \cite{Izeki1}. Scalar positive  OC manifolds are abundant by the following proposition.
\begin{prop}\label{51}
If $t$ is sufficiently small, the connected sum $(M_{t,A},g,\mathbb{I}_{t,A})$ is scalar positive.
\end{prop}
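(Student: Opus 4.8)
The plan is to prove that the OC Yamabe invariant $\lambda(M_{t,A},g)$ of (\ref{4.7}) is positive once $t$ is small; by Proposition \ref{2.1} (and the equivalence between the sign of the Yamabe invariant and scalar positivity) this gives the claim. Since $\lambda$ depends only on the conformal class, I am free to pick a convenient OC metric $g$ on $M_{t,A}$, and I would choose one that is \emph{cylindrical} along the glued neck.

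The key geometric input is the metric $g_{\rm cyl}:=\|\cdot\|^{-2}g_{0}=\bigl(\|\cdot\|^{-(Q-2)/2}\bigr)^{4/(Q-2)}g_{0}$ on $\mathscr H\setminus\{0\}$, an OC metric conformal to the flat $g_{0}$. From the explicit conformal factors in Proposition \ref{o2.9}, together with $\|D_{\delta}\zeta\|=\delta\|\zeta\|$, $\|S_{\mu}\zeta\|=\|\zeta\|$ and $\|R\zeta\|=\|\zeta\|^{-1}$ (the last a direct computation from (\ref{44})), one checks $D_{\delta}^{*}g_{\rm cyl}=S_{\mu}^{*}g_{\rm cyl}=R^{*}g_{\rm cyl}=g_{\rm cyl}$, hence $F^{*}g_{\rm cyl}=g_{\rm cyl}$ for $F=D_{t}\circ R\circ A$, $A\in{\rm Spin}(7)$. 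Therefore the metric $\psi_{i}^{*}g_{\rm cyl}$ on $U_{i}(t,1)$ is compatible with the gluing map $\Psi_{t,A}=\psi_{2}^{-1}\circ D_{t}\circ R\circ A\circ\psi_{1}$, so it descends to the neck of $M_{t,A}$. Now let $g$ equal $\psi_{i}^{*}g_{\rm cyl}$ on $U_{i}(t,1]$, equal a fixed OC metric $g_{(i)}$ on $M_{(i)}\setminus U_{i}(2)$, and on the two fixed collars $U_{i}(1,2)$ equal the conformal interpolation $\bigl[\chi\,\phi_{i}^{4/(Q-2)}+(1-\chi)\|\cdot\|^{-2}\bigr]g_{0}$, where $g_{(i)}=\phi_{i}^{4/(Q-2)}g_{0}$ in the chart $\psi_{i}$ (using that $M_{(i)}$ is spherical) and $\chi$ cuts off between $\|\cdot\|=1$ and $\|\cdot\|=2$. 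Being a positive multiple of $g_{0}$ in the chart, $g$ is a genuine OC metric in the conformal class of $M_{t,A}$. Moreover, since $\hbox{d}V_{g_{\rm cyl}}=\|\cdot\|^{-Q}\hbox{d}V_{g_{0}}$ and $\int_{t<\|\zeta\|<1}\|\zeta\|^{-Q}\hbox{d}V_{g_{0}}\asymp\int_{t}^{1}r^{-1}\hbox{d}r$, the neck has $g$-volume and ``length'' both $\asymp L:=\log(1/t)$.

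I would then estimate $E(u):=\int_{M_{t,A}}\bigl(b|\nabla_{g}u|^{2}+s_{g}u^{2}\bigr)\hbox{d}V_{g}$ over $u$ with $\int_{M_{t,A}}|u|^{2Q/(Q-2)}\hbox{d}V_{g}=1$. Take $\eta_{1},\eta_{2}\ge 0$ depending only on the neck coordinate with $\eta_{1}^{2}+\eta_{2}^{2}\equiv 1$, transitioning across the middle half of the neck, so $|\nabla_{g}\eta_{1}|^{2}+|\nabla_{g}\eta_{2}|^{2}\le C/L^{2}$ and is supported in the neck. Since $\eta_{1}\nabla\eta_{1}+\eta_{2}\nabla\eta_{2}=\tfrac12\nabla(\eta_{1}^{2}+\eta_{2}^{2})=0$, we get $E(u)=E(u\eta_{1})+E(u\eta_{2})-b\int u^{2}\bigl(|\nabla_{g}\eta_{1}|^{2}+|\nabla_{g}\eta_{2}|^{2}\bigr)\hbox{d}V_{g}\ge E(u\eta_{1})+E(u\eta_{2})-\tfrac{C}{L^{2}}\int_{\rm neck}u^{2}\hbox{d}V_{g}$. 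Each $u\eta_{i}$ is supported in the $M_{(i)}$-part plus a piece of the neck, bounded away from $\eta_{i}$; extending it by zero and choosing an OC metric on $M_{(i)}$ conformal to $g_{(i)}$ that agrees with $g$ on this support, the conformal invariance of the Yamabe invariant gives $E(u\eta_{i})\ge\lambda(M_{(i)},g_{(i)})\,\|u\eta_{i}\|_{2Q/(Q-2)}^{2}\ge 0$ — so the possibly negative scalar curvature on the collars and neck is absorbed into the two summand invariants. One of $\{\eta_{i}\ge 1/\sqrt 2\}$ carries at least half the $L^{2Q/(Q-2)}$-mass, say $i=1$, whence $\|u\eta_{1}\|_{2Q/(Q-2)}\ge c_{1}>0$ for a fixed $c_{1}$; and by Hölder $\int_{\rm neck}u^{2}\hbox{d}V_{g}\le{\rm Vol}_{g}({\rm neck})^{2/Q}\le CL^{2/Q}$. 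Hence $E(u)\ge\lambda(M_{(1)},g_{(1)})c_{1}^{2}-CL^{2/Q-2}$, and since $2/Q-2<0$, for $t$ small the right side is $\ge\tfrac12\min\bigl(\lambda(M_{(1)},g_{(1)}),\lambda(M_{(2)},g_{(2)})\bigr)c_{1}^{2}>0$, uniformly in $u$ and $A$. Thus $\lambda(M_{t,A},g)>0$.

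The main obstacle is the second paragraph: one must make the metric on the glued neck actually descend through the conformal gluing map $\Psi_{t,A}$, which forces the choice of the dilation- and inversion-invariant representative $g_{\rm cyl}$ (hence the need for Proposition \ref{o2.9} and the norm identities), while at the same time keeping the neck long and of controlled volume as $t\to 0$. Once that representative is in place, the rest is the standard soft ``long neck'' cutoff argument, with all negative-curvature contributions swallowed by the positive Yamabe invariants of the two summands.
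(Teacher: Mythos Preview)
Your argument is correct and complete, but it follows a genuinely different route from the paper. The paper carries out the Kobayashi ``good-slice'' argument: it takes a near-minimizer $f_{l}$ for the Yamabe quotient on $M_{t,A}$, uses pigeonhole along the cylinder $[0,l]\times\Sigma$ to locate a slab $[l_{*}-1,l_{*}+1]\times\Sigma$ of small energy (this is where the explicit formulas $|\nabla_{\tilde g}u|^{2}=|x_{\eta}|^{2}$ and $s_{\tilde g}=(Q-2)(Q-1)|x_{\eta}|^{2}$ from Lemma \ref{ls} are used), and then cuts $f_{l}$ off there to obtain a test function $F_{l}$ compactly supported on the punctured manifold $M_{0}$; comparing against $\lambda(M_{0})=\lambda(M,g,\mathbb{I})$ gives $\lambda(M_{t,A})>\lambda(M)-B/l$. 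Your proof instead runs an IMS-type localization: you split an arbitrary normalized test function $u$ by a smooth partition $\eta_{1}^{2}+\eta_{2}^{2}=1$ supported on the two halves, use the exact identity $E(u)=E(u\eta_{1})+E(u\eta_{2})-b\!\int u^{2}(|\nabla\eta_{1}|^{2}+|\nabla\eta_{2}|^{2})$, bound each $E(u\eta_{i})$ from below by $\lambda(M_{(i)})\|u\eta_{i}\|_{2Q/(Q-2)}^{2}$, and control the cross term by H\"older and the neck volume $\asymp L$.

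Both arguments hinge on the same geometric fact---that the dilation/rotation/inversion-invariant representative $g_{\rm cyl}=\|\cdot\|^{-2}g_{0}$ descends through $\Psi_{t,A}$ and turns the neck into a metric cylinder of length $L=\log(1/t)$---and you identified this correctly. Your version is closer in spirit to the alternative CR proofs cited in the paper (Cheng--Chiu \cite{Cheng}, Cheng--Chiu--Ho \cite{CCH}); it is somewhat more direct since it bypasses the choice of a near-minimizer and the pigeonhole step, and it yields a lower bound in terms of $\min\bigl(\lambda(M_{(1)}),\lambda(M_{(2)})\bigr)$ rather than $\lambda(M_{0})$. The paper's approach, on the other hand, makes the role of the explicit scalar curvature formula on the cylinder (Lemma \ref{ls}) more visible.
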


Proposition \ref{51} follows from the following Proposition \ref{psum}. As preparation, we prove the  following lemma firstly.
\begin{lem}\label{ls}
For $g|_{\xi}=\frac{g_{0}|_{\xi}}{\|\xi\|^{2}},\xi=(x,t),$ we have
\begin{equation}\begin{aligned}\label{8899}
s_{g}(\xi)=(Q-2)(Q-1)\frac{|x|^{2}}{\|\xi\|^{2}}.
\end{aligned}\end{equation}
\end{lem}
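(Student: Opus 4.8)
The metric under consideration is the conformal change $g=e^{2h}g_{0}$ of the flat model on $\mathscr H$, with $h=-\log\|\xi\|=-\tfrac14\log N$, where I abbreviate $N:=\|\xi\|^{4}=|x|^{4}+|t|^{2}$, so that $e^{-2h}=\|\xi\|^{2}=N^{1/2}$. (This is exactly the $D_{\delta}$-invariant ``cylindrical'' metric on $\mathscr H\setminus\{0\}$, which is why it models the long neck in the connected sum of Proposition \ref{51}.) The plan is to feed $h$ into the scalar curvature transformation formula $(\ref{2.32})$, namely $s_{g}=e^{-2h}\bigl(42\,\Delta_{g_{0}}h-420\sum_{a=0}^{7}(X_{a}h)^{2}\bigr)$, and to reduce all the work to computing $\sum_{a}(X_{a}N)^{2}$ and $\Delta_{g_{0}}N$ for the single function $N$.

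\emph{Step 1: horizontal derivatives of $N$.} From $X_{a}=\partial_{x_{a}}+2E^{\beta}_{ba}x_{b}\partial_{t_{\beta}}$ in $(\ref{2.14})$ one gets $X_{a}N=4|x|^{2}x_{a}+4E^{\beta}_{ba}x_{b}t_{\beta}$. Squaring and summing in $a$ gives three terms: the ``$x$'' part contributes $16|x|^{6}$; the cross term is a multiple of $\sum_{a,b}E^{\beta}_{ba}x_{a}x_{b}$, which vanishes by antisymmetry of $E^{\beta}$ (Proposition \ref{p2.4}); and the ``$t$'' part equals $16\,t_{\beta}t_{\gamma}x_{b}x_{c}\sum_{a}E^{\beta}_{ba}E^{\gamma}_{ca}$. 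Writing $\sum_{a}E^{\beta}_{ba}E^{\gamma}_{ca}=-(E^{\beta}E^{\gamma})_{bc}$ and using $(E^{\beta})^{2}=-I_{8\times8}$ together with the fact that $E^{\beta}E^{\gamma}=-E^{\gamma}E^{\beta}$ is antisymmetric for $\beta\neq\gamma$ (Proposition \ref{p2.4}), this last part collapses to $16|t|^{2}|x|^{2}$. Hence $\sum_{a}(X_{a}N)^{2}=16|x|^{6}+16|x|^{2}|t|^{2}=16|x|^{2}N$. Differentiating $X_{a}N$ once more in the same spirit — again only $\partial_{x}$, $\partial_{t}$ and Proposition \ref{p2.4} enter, the $x$-part producing $40|x|^{2}$ and the $t$-part producing $56|x|^{2}$ via $\sum_{a}E^{\beta}_{ca}E^{\beta}_{ba}=\delta_{bc}$ summed over the seven values of $\beta$ — one obtains $\sum_{a}X_{a}^{2}N=96|x|^{2}$, i.e. $\Delta_{g_{0}}N=-96|x|^{2}$.

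\emph{Step 2: assembling.} From $h=-\tfrac14\log N$ one has $X_{a}h=-\tfrac1{4N}X_{a}N$, hence $\sum_{a}(X_{a}h)^{2}=\tfrac1{16N^{2}}\cdot16|x|^{2}N=\tfrac{|x|^{2}}{N}$, and $\Delta_{g_{0}}h=-\sum_{a}X_{a}^{2}h=\tfrac14\bigl(\tfrac{1}{N}\sum_{a}X_{a}^{2}N-\tfrac{1}{N^{2}}\sum_{a}(X_{a}N)^{2}\bigr)=\tfrac14\bigl(\tfrac{96|x|^{2}}{N}-\tfrac{16|x|^{2}}{N}\bigr)=\tfrac{20|x|^{2}}{N}$. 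Plugging into $(\ref{2.32})$ gives
\[
s_{g}=N^{1/2}\Bigl(42\cdot\frac{20|x|^{2}}{N}-420\cdot\frac{|x|^{2}}{N}\Bigr)=\frac{|x|^{2}}{N^{1/2}}\,(840-420)=\frac{420\,|x|^{2}}{\|\xi\|^{2}},
\]
and since $(Q-1)(Q-2)=21\cdot20=420$ this is precisely $(\ref{8899})$. The only real content of the argument is Step 1: the index bookkeeping that reduces the $t$-dependent sums $\sum_{a}(X_{a}N)^{2}$ and $\sum_{a}X_{a}^{2}N$ to multiples of $|x|^{2}$, using only the antisymmetry of the $E^{\beta}$ and $(E^{\beta})^{2}=-I_{8\times8}$; everything after that is the numerology $840-420=420=(Q-1)(Q-2)$.
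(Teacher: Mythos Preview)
Your proof is correct and follows essentially the same route as the paper: both compute $X_{a}\|\xi\|^{4}$, then $\sum_{a}(X_{a}\|\xi\|^{4})^{2}=16|x|^{2}\|\xi\|^{4}$ and $\Delta_{g_{0}}\|\xi\|^{4}=-4(Q+2)|x|^{2}$ using only the antisymmetry of $E^{\beta}$ and $(E^{\beta})^{2}=-I_{8\times8}$, and then feed these into a conformal transformation formula for the scalar curvature. The only cosmetic difference is that the paper packages the conformal change as $\phi=\|\xi\|^{-(Q-2)/2}$ and applies the Yamabe equation $(\ref{122})$, whereas you use the equivalent exponential form $(\ref{2.32})$ with $h=-\log\|\xi\|$; the underlying derivative computations and the final numerology $(Q-1)(Q-2)=420$ are identical.
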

\begin{proof}
Note that
\begin{align}\label{app1'}
X_{a}\|\xi\|^{4}=4|x|^{2}x_{a}+4E_{ba}^{\beta}x_{b}t_{\beta},
\end{align}
by using the expression of the vector field $X_{a}$ in (\ref{2.14}). Then
{\small\begin{equation}\label{A2}
\begin{aligned}
\left|\nabla_{0}\|\xi\|^{4}\right|^{2}=\left(X_{a}\|\xi\|^{4}\right) \cdot\left(X_{a}\|\xi\|^{4}\right)
=16\left(|x|^{4}x_{a}\cdot x_a+E_{b'a}^{\beta'}
E_{ba}^{\beta}x_{b'}x_{b}t_{\beta}t_{\beta'}\right)=16\|\xi\|^{4}|x|^{2},
\end{aligned}
\end{equation}}
by using (\ref{app1'}) and $E_{ba}^{\beta}E_{ab'}^{\beta'}=(E^{\beta}E^{\beta'})_{bb'},$  antisymmetry of $E^{\beta}E^{\beta'}$ of $\beta\neq \beta'$ and
$\left(E^{\beta}\right)^{2}=-{\rm id}$ in Proposition \ref{p2.4}. Similarly, by (\ref{app1'}), we get
\begin{equation}\label{A1}
\begin{aligned}
\Delta_{0}\|\xi\|^{4}=-\sum\left( {8x_{a}^{2}+4|x|^{2}+8E_{b'a}^{\beta}
E_{ba}^{\beta}x_{b'}x_{b}}\right)=-4(Q+2)|x|^{2}.
\end{aligned}
\end{equation}
We can write $g=\phi^{\frac{4}{Q-2}}g_{0}$ with $\phi={\|\xi\|^{-\frac{Q-2}{2}}}.$ It follows from the transformation formula (\ref{122}) of the scalar curvatures that
\begin{equation*}\begin{aligned}
s_{g}&=\phi^{-\frac{Q+2}{Q-2}}b\Delta_{0}\phi
=\|\xi\|^{\frac{Q+2}{2}}b\Delta_{0}
\|\xi\|^{-\frac{Q-2}{2}}\\
&=-\frac{Q-2}{8}b\|\xi\|^{\frac{Q+2}{2}}\left(\frac{Q+6}{8}
\|\xi\|^{-\frac{Q+14}{2}}\left|\nabla_{0}\|\xi\|^{4}\right|^{2}+\|\xi\|^{-\frac{Q+6}{2}}
\Delta_{0}\|\xi\|^{4}\right)=(Q-2)(Q-1)\frac{|x|^{2}}{\|\xi\|^{2}}
\end{aligned}\end{equation*}
by (\ref{A2}) and (\ref{A1}) for $b=4\frac{Q-1}{Q-2}$.
\end{proof}
\begin{prop}\label{psum}
If $t$ is sufficiently small, we have $\lambda(M_{t,A},g,\mathbb{I}_{t,A})>0.$
\end{prop}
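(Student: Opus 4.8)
The plan is to reduce the statement to a spectral inequality and then prove it by a concentration argument that exploits the length of the neck. Since $\lambda(M_{t,A},g,\mathbb{I}_{t,A})$ depends only on the conformal class, and — as in the proof of Proposition \ref{2.1} — $\lambda(M_{t,A},g)>0$ holds if and only if the bottom eigenvalue $\lambda_1(L_g)$ of the OC Yamabe operator on the compact manifold $M_{t,A}$ is positive, equivalently the quadratic form $Q_g(u):=\int_{M_{t,A}}(b|\nabla_g u|^2+s_g u^2)\,\hbox{d}V_g$ is positive definite, it suffices to exhibit, for $t$ small, one convenient representative $g_t$ in the conformal class for which $Q_{g_t}$ is positive definite. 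I would argue by contradiction: if $\lambda(M_{t_k,A})\le 0$ along a sequence $t_k\to 0$, take positive first eigenfunctions $u_k$ with $\|u_k\|_{L^2(\hbox{d}V_{g_{t_k}})}=1$, so that $Q_{g_{t_k}}(u_k)=\lambda_1(L_{g_{t_k}})\le 0$.

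First I fix the metric. By the trichotomy (Proposition \ref{2.1}) choose on each component an OC metric $g_i$ with $s_{g_i}>0$ everywhere, and in the chart $\psi_i$ write $g_i=\phi_i^{4/(Q-2)}g_0$. On $\mathscr{H}\setminus\{0\}$ the model $g_{\mathrm{cyl}}:=\|\xi\|^{-2}g_0$ is invariant under the dilations, under the inversion $R$ of $(\ref{44})$ (one checks $\|R\xi\|=\|\xi\|^{-1}$, so $R^*g_{\mathrm{cyl}}=g_{\mathrm{cyl}}$ by Proposition \ref{o2.9}), and under ${\rm Spin}(7)$; hence it glues to itself under the conformal OC identification $\Psi_{t,A}$, and by Lemma \ref{ls} it has scalar curvature $s_{g_{\mathrm{cyl}}}=(Q-2)(Q-1)|x|^2/\|\xi\|^2\ge 0$. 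I then build $g_t$ on $M_{t,A}$ so that $g_t=g_i$ on the fixed region $M_i\setminus U_i(3/4)$ (where $s_{g_t}\ge s_0>0$ with $s_0$ independent of $t$), $g_t=\lambda^{4/(Q-2)}g_{\mathrm{cyl}}$ on the deep part of the neck (where $s_{g_t}\ge 0$), with the same constant $\lambda$ on both sides so the two models match, and a fixed interpolation on the annuli $T_i=\{1/2<\|\xi\|<3/4\}$ on which $|s_{g_t}|\le C_0$ with $C_0$ independent of $t$. In the metric $g_t$ the neck is a long ``cylinder'' over the compact cross-section $N:=\{\|\xi\|=1\}\subset\mathscr{H}$, of length $L_t\sim|\log t|\to\infty$, and $g_t$ is dilation- (equivalently $r$-translation-, $r=-\log\|\xi\|$) invariant there.

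Now the localization. Take an IMS cutoff $\chi$ equal to $1$ on the ``$M_1$-half'' of $M_{t_k,A}$, vanishing on the ``$M_2$-half'', and transitioning across the middle of the neck so that $|\nabla\chi|^2\le CL_{t_k}^{-2}$ there; the Agmon--IMS identity gives $Q_{g_{t_k}}(\chi u_k)=\lambda_1(L_{g_{t_k}})\int\chi^2u_k^2+b\int|\nabla\chi|^2u_k^2\le C/L_{t_k}^2\to 0$. On the other hand $\chi u_k$ is supported in, and vanishes near the free boundary of, the truncated manifold $\widehat W_1^{\,L}:=(M_1\setminus U_1(3/4))\cup T_1\cup([0,L]\times N)$ with $L=L_{t_k}$, so $Q_{g_{t_k}}(\chi u_k)\ge\nu_1(L)\int(\chi u_k)^2$, where $\nu_1(L)$ is the bottom of the Dirichlet spectrum of $L_{g_t}$ on $\widehat W_1^{\,L}$. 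As $L\to\infty$, $\nu_1(L)\to\nu_1^\infty$, the bottom of the spectrum of $L_{g_t}$ on the fixed complete manifold $\widehat W_1^{\,\infty}=(M_1\setminus U_1(3/4))\cup T_1\cup([0,\infty)\times N)$ with a cylindrical end; if $\nu_1^\infty>0$ this forces $\int(\chi u_k)^2\le C/(L_{t_k}^2\nu_1^\infty)\to 0$, and doing the same on the $M_2$-side with a cutoff $\chi'$ so that $\chi^2+\chi'^2\ge c_0>0$ yields $1=\|u_k\|_{L^2}^2\to 0$, a contradiction. Everything is thus reduced to the $t$-independent assertion $\nu_i^\infty>0$, $i=1,2$.

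Proving $\nu_i^\infty>0$ is the heart of the matter and the main obstacle. I would do it in two steps. (i) On the cylindrical end, $L_{g_t}=b\Delta_{g_{\mathrm{cyl}}}+s_{g_{\mathrm{cyl}}}$ (up to the constant $\lambda$-rescaling) commutes with $r$-translations, so a direct-integral decomposition in $r$ bounds its bottom below by $\mu_N:=\inf\mathrm{spec}(b\Delta_N+s_{g_{\mathrm{cyl}}}|_N)$ on the compact $N$; and $\mu_N>0$ because, by Lemma \ref{ls}, $s_{g_{\mathrm{cyl}}}|_N$ is $\ge 0$ and strictly positive off the lower-dimensional set $\{x=0\}\cap N$, so the subelliptic form $b\int_N|\nabla v|^2+\int_N s_{g_{\mathrm{cyl}}}|_N v^2$ is positive definite on the compact $N$. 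Hence the infimum of the essential spectrum of $L_{g_t}$ on $\widehat W_i^{\,\infty}$ is $\ge\mu_N>0$. (ii) To exclude a nonpositive discrete eigenvalue below $\mu_N$, transport the positive first eigenfunction $\phi_1$ of $L_{g_i}$ on the compact $M_i$, which satisfies $L_{g_i}\phi_1=\lambda_1(L_{g_i})\phi_1>0$ since $\lambda(M_i,g_i)>0$: writing the metric on $\widehat W_i^{\,\infty}$ as $\widehat g=u^{4/(Q-2)}g_i$ (with $u>0$ and $u\to\infty$ at the end), the transformation law $(\ref{o121})$ gives that $w:=\phi_1/u>0$ satisfies $L_{\widehat g}w=\lambda_1(L_{g_i})u^{-4/(Q-2)}w=:Vw$ with $V>0$ everywhere, and the ground-state transform $\int\psi L_{\widehat g}\psi=b\int w^2|\nabla(\psi/w)|^2+\int V\psi^2\ge\int V\psi^2$ shows that any minimizing sequence for $\inf\mathrm{spec}(L_{\widehat g})$ with Rayleigh quotients tending to a value $\le 0$ must concentrate at infinity, where $V\to 0$, so $\inf\mathrm{spec}(L_{\widehat g})$ equals the bottom of the essential spectrum, which is $\ge\mu_N>0$; in all cases $\nu_i^\infty>0$. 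Note in particular that the (possibly negative) scalar curvature on the fixed transition annuli $T_i$ causes no difficulty, since the positive supersolution $w$ is obtained by conformal transport from the scalar-positive compact $M_i$ regardless of the sign of $s_{g_t}$ on $T_i$. The delicate points, where the real work lies, are the subelliptic cross-sectional Poincar\'e gap $\mu_N>0$, the direct-integral/Persson-type description of the SubLaplacian spectrum on the dilation-invariant end, and the subelliptic regularity needed both for the ground-state transform and for the convergence $\nu_1(L)\to\nu_1^\infty$.
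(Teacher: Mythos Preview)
Your approach is genuinely different from the paper's, and it is worth contrasting the two. The paper follows Kobayashi's transplantation method: it fixes a representative with $(\psi_i^{-1})^*g=\|\xi\|^{-2}g_0$ on the punctured charts, takes a near-minimizer $f_l$ of the Yamabe quotient on $M_t$ with $\int f_l^{2Q/(Q-2)}=1$, finds by pigeonhole a unit slice of the neck on which the energy is $O(1/l)$, cuts $f_l$ off there to obtain $F_l$ on $M_0$, and concludes $\lambda(M_0)\le\lambda(M_t)+B/l$. The entire cutoff estimate rests on the identity (from Lemma~\ref{ls} and (4.4)) that on the cylindrical model both the scalar curvature and $|\nabla_{\tilde g}u|^2$ equal $|x_\eta|^2$ up to constants, so the Yamabe energy on a slice automatically controls the extra term $|x_\eta|^2 f_l^2$ that the cutoff produces. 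This is a single elementary trick that bypasses any spectral analysis on the cylinder.

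Your route (spectral positivity via IMS localization plus a ground-state transform) is structurally sound, but it trades that trick for a hard lemma you do not prove: the strict positivity of the bottom of the spectrum of $L_{g_{\mathrm{cyl}}}$ on the model cylinder (your $\mu_N>0$). Unwinding the conformal change, this is exactly the Hardy-type inequality
\[
b\int_{\mathscr H}|\nabla_0 u|^2\,\mathrm dV_0\;\ge\;c\int_{\mathscr H}\frac{u^2}{\|\xi\|^{2}}\,\mathrm dV_0,
\]
which is \emph{not} the standard H-type Hardy inequality (whose weight is $|x|^2/\|\xi\|^4$, degenerate along $\{x=0\}$). Your sketch ``direct-integral in $r$ bounds the bottom by $\inf\mathrm{spec}(b\Delta_N+s|_N)$'' is not justified: the horizontal fields $\tilde X_a$ do not split as $\partial_r\oplus Y_a$, the fiber operators $\hat L(\tau)$ acquire first-order cross terms in $\tau$, and there is no obvious reason the infimum over $\tau$ is attained at $\tau=0$ or equals your $\mu_N$. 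The ground-state transform in your step (ii) only gives $\nu_i^\infty\ge 0$; everything then hinges on step (i). So the proposal has a genuine gap at precisely the point you flag as ``delicate'': you would need to supply an independent proof of the non-degenerate Hardy inequality above (or of the cross-sectional gap), whereas the paper's argument never needs it.
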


\begin{proof}
See  \cite{Kobayashi} for the Riemannian case, \cite{wang1}  for the spherical CR case (see also \cite{Cheng} for a different  proof for the spherical CR case) and \cite{Shi} for the spherical  qc case. In \cite{CCH,Dietrich}, it is generalized to the  non-spherical CR case. The proof of this proposition is similar to the spherical CR case and the spherical  qc case.  Let
$M_{0}=M_{1}\backslash\{\xi_{1}\}\cup M_{2}\backslash\{\xi_{2}\},$
and let $\hat{g}$ be an OC metric on $M_{0}$. Then, by multiplying a positive function $\mu\in C^{\infty}(M)\backslash\{\xi_{1},\xi_{2}\}$, we
can assume $g=\mu\hat{g}$ satisfying
\begin{align*}
\left(\psi_{i}^{-1}\right)^{*}g|_{\xi}=\frac{g_{0}|_{\xi}}{\|\xi\|^{2}}\quad {\rm on} \quad B(0,2)\backslash\{0\},
\end{align*}
where $\psi_{i}:U_{i}\rightarrow B(0,2),\ i=1,2,$ are coordinate charts in  $(\ref{265}).$ It is easy to see that gluing mapping
$\Psi_{t,A}$ in (\ref{2.60}) preserves the metric $\frac{g_{0}|_{\xi}}{\|\xi\|^{2}}$ on $t<\|\xi\|<\frac{1}{t}$, $0<\frac{2}{3}t<1$, by the transformation formula and Proposition \ref{o2.9}. $\frac{g_{0}|_{\xi}}{\|\xi\|^{2}}$ is invariant under the rotation $A$   and the inversion $R.$ Hence we can glue $g$ by $\Psi_{t,A}$ to obtain a  OC metric that coincides with $g$  on $M_{1}\backslash \overline{U_{1}(t)}\cup
M_{2}\backslash \overline{U_{2}(t)}.$  We denote the resulting OC metric also by $g$ by abuse of notations, and  the connected sum by
$(M_{t},g,\mathbb{I}_{t}).$ Here we omit the subscripts $A$ for simplicity.

$(M_{0},g,\mathbb{I})$ has two cylindrical ends. We can identify the ball with cylindrical end by the mapping
\begin{equation}
\begin{aligned}
\Psi:\quad B(0,1)\longrightarrow[0,\infty)\times\Sigma,\quad
\xi=D_{e^{-u}}(\eta)\longmapsto\left(\ln\frac{1}{\|\xi\|},\frac{\xi}{\|\xi\|}\right)
=(u,\eta),
\end{aligned}
\end{equation}
where $\Sigma=\{\eta\in\mathscr{H};\|\eta\|=1\}$ is diffeomorphic to the sphere $S^{14}.$
Define a Carnot-Carath\'eodory metric
$$\left.\tilde{g}\right|_{\Psi(\xi)}=(\Psi^{-1})^{*}\left(\frac{g_{0}
|_{\xi}}{\|\xi\|^{2}}\right)$$
on $[0,\infty)\times\Sigma$ and  $\tilde{\Theta}=\left(\Psi^{-1}\right)^*\Theta_0= (\tilde{\theta}_{1},\cdots )$ is a compatible contact form.
$\left(B(0,1)\backslash\{0\},\frac{g_{0}}{\|\xi\|^{2}},\mathbb{I}\right)$ is OC equivalent to $([0,\infty)\times\Sigma,\tilde{g},\mathbb{I})$.
Since $(\psi_{i}^{-1})^{*}\hbox{d}V_{g}=\frac{\hbox{d}V_{0}}{\|\xi
\|^{Q}}$ is invariant under rescaling, it is easy to see that the measure $\tilde{\theta}_{1}\wedge\cdots\wedge\tilde{\theta}_{7}
\wedge(\hbox{d}\tilde{\theta}_\beta)^{4}$ is invariant under translation $(u',\xi)\rightarrow(u'+u_{0},\xi)$ on $[0,\infty)\times\Sigma$. As a measure, we have
\begin{align}\label{4.12}
\tilde{\theta}_{1}\wedge\cdots
\wedge\tilde{\theta}_{7}\wedge(\hbox{d}\tilde{\theta}_\beta)^{4}
=\hbox{d}u\hbox{d}S_{\Sigma},
\end{align}
where $\hbox{d}S_{\Sigma}$ is a measure on $\Sigma$. Set $l=\ln\frac{1}{t},$
and write
\begin{align*}
(M_{0},g,\mathbb{I})=([0,\infty)\times\Sigma,\tilde{g},\mathbb{I})
\cup(\hat{M},g,\mathbb{I})
\cup([0,\infty)\times\Sigma,\tilde{g},\mathbb{I}),
\end{align*}
where $\hat{M}=M\backslash (U_{1}(1)\cup U_{2}(1))$. We identify two pieces of  $((0,l)\times\Sigma,\tilde{g},\mathbb{I})$  to get
$(M_{t},g,\mathbb{I}_{t})$.

For $\eta\in\Sigma,$  we can write $\eta=(x_{\eta},t_{\eta})\in\mathscr{H}$ for some $x_\eta\in\mathbb{R}^{8},t_{\eta}\in\mathbb{R}^{7}.$ Then,
\begin{align}\label{4.4}
|\nabla_{\tilde{g}}u|^2=\left|\nabla_{g}\left(\ln\frac{1}{\|\xi\|}\right)\right|^2
=\|\xi\|^{2}\left|\nabla_{0}\left(\ln\frac{1}{\|\xi\|}\right)\right|^{2}
=\frac{1}{16\|\xi\|^{6}}\left|
\nabla_{0}\|\xi\|^{4}\right|^{2}=\frac{|x_\xi|^{2}}{\|\xi\|^{2}}=|x_\eta|^2,
\end{align}
by (\ref{A2}), where $\xi=(x_\xi,t_\xi)=\Psi^{-1}(u,\eta)\in B(0,1)$ for some $x_\xi,t_\xi,$ and
\begin{align}\label{4.5}
s_{\tilde{g}}(u,\eta)=(Q-2)(Q-1){|x_{\eta}|^{2}},
\end{align}
by   Lemma \ref{ls}.
By the definition of the OC Yamabe invariant $\lambda(M_{t},g,\mathbb{I}_{t})$, we can find a positive function $f_{l}\in C^{\infty}(M_{t})$ such that
\begin{align}\label{4.14}
\int_{M_{t}}\left(b|\nabla_{g}f_{l}|^{2}+s_{g}f_{l}^{2}\right) \hbox{d}V_{g}
<\lambda(M_{t},g,\mathbb{I}_{t})+\frac{1}{l},\quad {\rm with}\quad
\int_{M_{t}}f_{l}^{\frac{2Q}{Q-2}}\hbox{d}V_{g}=1.
\end{align}
Put $A_{1}=-\min\left\{0,\min_{x\in \hat{M}}s_{\hat{g}}\right\}{\rm Vol}(\hat{M})^{\frac{4}{Q-2}},$ which is uniformly bounded by ${\rm
Vol}(M,g)$. Thus  by using H\"older's inequality we get from (\ref{4.14}) that
$$\int_{[0,l]\times\Sigma}\left(b|
\nabla_{\tilde{g}}f_{l}|^{2}+s_{\tilde{g}}f_{l}^{2}\right)
\hbox{d}u\hbox{d}S_{\Sigma}<\lambda(M_{t},g,\mathbb{I}_{t})+
\frac{1}{l}+A_{1},$$
(cf. Lemma 6.2 in \cite{Kobayashi}). Note that $s_{\tilde{g}}$ is nonnegative on $[0,\infty)\times\Sigma$ by (\ref{4.5}). Therefore,
there exists $l_{*}\in[1,l-1]$ such that
$$\frac12\int_{l_{*}-1}^{l_{*}+1}\hbox{d}u\int_{\Sigma}\left(b|
\nabla_{\tilde{g}}f_{l}|^{2}+s_{\tilde{g}}f_{l}^{2}\right)
\hbox{d}{S_{\Sigma}}<\frac{\lambda(M_{t},g,\mathbb{I}_{t})+
\frac{1}{l}+A_{1}}{l-2},$$
i.e. we have the estimate
\begin{align}\label{4.16}
\int_{l_{*}-1}^{l_{*}+1}\hbox{d}u\int_{\Sigma}\left(|
\nabla_{\tilde{g}}f_{l}(u,\eta)|^{2}+|x_{\eta}|^{2}
f_{l}^{2}(u,\eta)\right)\hbox{d}S_{\Sigma}(\eta)<\frac{C}{l},
\end{align}
by the scalar curvature of $\tilde g$ in (\ref{4.5}), where $C$ is a constant independent of $l$ (because the OC Yamabe invariants
$\lambda(M_{t},g,\mathbb{I}_{t})$ for $t>1$ have a uniform upper bound by choosing a test function). It is different from the Riemannian case that the
scalar curvature of  $\frac{g_{0}|_{\xi}}{\|\xi\|^{2}}$ is not constant. But it is still independent of the variable $u.$ Now define a Lipschitz function
$F_{l}$ on $M_{0}$ by $F_{l}=f_{l}$ on $[0,l_{*})\times\Sigma\cup\hat{M}\cup[0,l-l_{*})\times\Sigma$ and
\begin{equation}
F_{l}(u,x)=\left\{
\begin{array}{rcl}
&(l_{*}+1-u){f_{l}}(u,x)& \quad {\rm for}\quad(u,x)\in[l_{*},l_{*}+1]\times\Sigma,\\
&0 &\ \ \ {\rm for}\quad(u,x)\in[l_{*}+1,\infty)\times\Sigma,
\end{array}
\right.
\end{equation}
 and similarly on $[l-l_{*} ,\infty)\times\Sigma$.

By definition, $|\nabla_{\tilde{g}}F_{l}|=|\nabla_{\tilde{g}}f_{l}|$ and  $F_{l}^{2}=f_{l}^{2}$ hold on
$[0,l_{*})\times\Sigma\cup\hat{M}\cup[0, l-l_{*} )\times\Sigma.$ On the other hand, note that
$|
\nabla_{\tilde{g}}F_{l}|\leq |
\nabla_{\tilde{g}}u||{f}_{l}|+|
\nabla_{\tilde{g}}{f}_{l}|$ pointwisely on $(l_{*},l_{*}+1)\times\Sigma$ by definition.
By (\ref{4.4}),  (\ref{4.5}) and estimate (\ref{4.16}),
we find that
\begin{equation*}
\begin{aligned}
&\int_{(l_{*},l_{*}+1)\times\Sigma}\left(b|
\nabla_{\tilde{g}}F_{l}|^{2}
+s_{\tilde{g}}F_{l}^{2}\right)
\hbox{d}u\hbox{d}S_{\Sigma}\leq C'\int_{(l_{*},l_{*}+1)\times\Sigma}
\left(|\nabla_{\tilde{g}}{f}_{l}|^{2}+|x_{\eta}|^{2}
{f_{l}}^{2}\right)\hbox{d} u\hbox{d} S_{\Sigma}(\eta)\leq\frac{B'}{l}.
\end{aligned}
\end{equation*}
Therefore, we get
\begin{align*}
\int_{M_{0}}\left(b|\nabla_{g} F_{l}|^{2}+s_{g}F_{l}^{2}\right)
\hbox{d}V_{g}<\lambda(M_{t},
g,\mathbb{I}_{t})+\frac{B}{l},
\end{align*}
for some constant  $B$ independent of $l$.

Obviously from (\ref{4.14}) and the definition of $F_{l},$ we get $\int_{M_{0}}F_{l}^{\frac{2Q}{Q-2}}\hbox{d}V_{g}>1.$
 Therefore,
\begin{align}\label{4.20}
\inf_{F>0}\frac{\int_{M_{0}}\left(b|\nabla_{g}F|^{2}
+s_{g}\right)\hbox{d}V_{g}}{\left(\int_{M_{0}}
F^{\frac{2Q}{Q-2}}\hbox{d}V_{g}\right)^{\frac{Q-2}{Q}}}<
\lambda(M_{t},g,\mathbb{I}_{t})+\frac{B}{l},
\end{align}
where the infimum is taken over all nonnegative Lipshitzian functions with compact
support. It follows from the definition of the Yamabe invariant that the left side is greater than or equal to
$\lambda(M,g,\mathbb{I})$.
Then $\lambda(M_{t},g,\mathbb{I}_{t})$ is positive if $l$ is sufficiently
large, i.e. $t$ is sufficiently small. We  complete the proof.
\end{proof}

\section{The convex cocompact discrete subgroups of ${\rm F}_{4(-20)}$}
\subsection{Convex cocompact subgroups of ${\rm F}_{4(-20)}$}
A group $G$ is called \emph{discrete} if the topology on $G$ is  discrete. We say that $G$ acts \emph{discontinuously} on a space $X$ at point $w$ if there is a neighborhood $U$ of $w$, such that $g(U)\cap U=\emptyset$ for all but finitely many $g\in G$.

Let $\Gamma$ be a discrete subgroup of ${\rm F}_{4(-20)}$.  It is known that the limit set $\Lambda(\Gamma)$ in (\ref{limitset}) does not depend on the choice of $q\in \mathcal U$ (cf. \cite[Proposition 1.4, 2.9]{Eberlein}).
The limit set $\Lambda(\Gamma)$ of all limit points is closed and invariant under $\Gamma$.
The \emph{radial limit set of $\Gamma$}  is $$\Lambda^{r}(\Gamma):=\left\{\xi\in\Lambda(\Gamma)\left|
\liminf_{T\rightarrow\infty}d(\xi_{T},\gamma(0))<\infty,\gamma\in \Gamma\right.\right\},$$
where $\xi_{T}$ refers to the point on the ray from $0$ to $\xi$ for which $d(0,\xi_{T})=T$ and $d(\cdot,\cdot)$ is the octonionic hyperbolic distance.
$\Gamma$ is called a \emph{Kleinian group} if $\Omega(\Gamma)$ defined in (\ref{Omegag}) is non-empty. A Kleinian group is called \emph{elementary} if $\Lambda(\Gamma)$ contains at most two points.
$\Gamma$ is called \emph{convex cocompact} if $\bar{M}_{\Gamma}=(\mathcal U\cup\Omega(\Gamma))/\Gamma$ is a compact manifold with boundary. In this case, $\Omega(\Gamma)/\Gamma$ is a compact smooth OC manifold.

\begin{prop}\label{cor}{\rm{(cf.  \cite[p. 528]{Corlette})}}
Suppose that $\Gamma $ is a convex cocompact group of ${\rm F}_{4(-20)}$. Then,\\
{\rm(1)} The radial limit set coincides with the limit set.\\
{\rm(2)} Any small deformation of the inclusion $\iota:\Gamma\rightarrow {\rm F}_{4(-20)}$ maps $\Gamma$ isomorphically to a convex cocompact group.
\end{prop}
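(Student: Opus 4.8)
The plan is to treat the two assertions separately, in both cases exploiting that the octonionic Siegel domain $\mathcal U\cong H^2_{\mathbb O}$ is a rank-one symmetric space of noncompact type, hence a proper, geodesically complete, Gromov hyperbolic space (indeed CAT$(-\kappa)$ after rescaling, since the sectional curvature of $H^2_{\mathbb O}$ is pinched) on which ${\rm F}_{4(-20)}$ acts by isometries with visual boundary $\partial\mathcal U\cong S^{15}$. I will use the standard reformulation of convex cocompactness: writing $C_\Gamma$ for the convex hull in $\mathcal U$ of the limit set $\Lambda(\Gamma)$, the hypothesis that $(\mathcal U\cup\Omega(\Gamma))/\Gamma$ is compact is equivalent to compactness of the convex core $C_\Gamma/\Gamma$.

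For part (1), the inclusion $\Lambda^r(\Gamma)\subseteq\Lambda(\Gamma)$ holds by definition, so only the reverse inclusion needs proof. Since the radial condition is independent of the base point --- two geodesic rays with the same endpoint in $\partial\mathcal U$ are asymptotic, hence stay within bounded distance of one another --- I may replace $0$ by a point $o\in C_\Gamma$. Given $\xi\in\Lambda(\Gamma)$, the geodesic ray $\sigma$ from $o$ to $\xi$ lies in $C_\Gamma$ by convexity. Projecting $\sigma$ to the compact quotient $C_\Gamma/\Gamma$, compactness yields $t_n\to\infty$ and elements $\gamma_n\in\Gamma$ for which $\gamma_n^{-1}\sigma(t_n)$ remains in a fixed compact fundamental region; consequently $d(\sigma(t_n),\gamma_n(o))$ is bounded, which is exactly the condition $\xi\in\Lambda^r(\Gamma)$. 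Transferring back to the base point $0$ introduces only a bounded error, so $\Lambda(\Gamma)\subseteq\Lambda^r(\Gamma)$.

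For part (2), I would first record that convex cocompactness makes $\Gamma$ word-hyperbolic: by the \v{S}varc--Milnor lemma applied to the cocompact, properly discontinuous action on the CAT$(-\kappa)$ space $C_\Gamma$, the group $\Gamma$ is finitely generated and the orbit map $o_\iota\colon\gamma\mapsto\gamma(o)$ is a quasi-isometric embedding, with a $\Gamma$-equivariant homeomorphism $\partial\Gamma\cong\Lambda(\Gamma)$. Convex cocompactness of a representation $\rho\colon\Gamma\to{\rm F}_{4(-20)}$ is then equivalent to the orbit map $\gamma\mapsto\rho(\gamma)o$ being a quasi-isometric embedding of the word metric into $\mathcal U$. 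Fixing a finite generating set $S$, the upper displacement bound $d(o,\rho(\gamma)o)\le \bigl(\max_{s\in S}d(o,\rho(s)o)\bigr)\,|\gamma|_S$ is automatic and varies continuously with $\rho$; the content is the lower bound. For this I will invoke the stability of quasi-isometric embeddings of hyperbolic groups: for a word-geodesic $\gamma=s_1\cdots s_n$, the broken path $o,\rho(s_1)o,\dots,\rho(\gamma)o$ is a local quasi-geodesic whose local constants depend continuously on $\rho|_S$, and in the Gromov hyperbolic space $\mathcal U$ a sufficiently good local quasi-geodesic is automatically a global quasi-geodesic fellow-travelling the geodesic $[o,\rho(\gamma)o]$ (the Morse lemma together with the local-to-global principle). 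This yields a uniform lower displacement bound for all $\rho$ close to $\iota$, whence such $\rho$ is discrete, faithful, and convex cocompact; faithfulness is immediate since the lower bound forces $\rho(\gamma)o\neq o$ for $\gamma\neq 1$. The equivariant boundary maps then vary continuously, so $\rho(\Gamma)$ is again convex cocompact.

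The main obstacle is precisely the lower displacement estimate in part (2), i.e.\ the openness of the quasi-isometric-embedding condition. The difficulty is the passage from a local to a global quasi-geodesic: perturbing $\rho$ only controls the orbit path on the scale of single generators, and one must use the hyperbolicity of $\mathcal U$ (pinched negative curvature of $H^2_{\mathbb O}$) to upgrade this to global control and rule out backtracking of the orbit path. This is the heart of the structural stability of convex cocompact representations in rank one, and I would either carry out the Morse-lemma argument in the CAT$(-\kappa)$ space $\mathcal U$ directly or appeal to the corresponding stability result used by Corlette \cite{Corlette}.
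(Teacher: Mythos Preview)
The paper does not supply its own proof of this proposition; it is stated with the attribution ``(cf.\ \cite[p.~528]{Corlette})'' and immediately followed by the next subsection. So there is nothing to compare your argument against beyond the reference to Corlette.

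Your outline is correct and follows the standard route for rank-one symmetric spaces. For (1), the argument via the convex core is exactly the right one: compactness of $C_\Gamma/\Gamma$ forces every geodesic ray to $\Lambda(\Gamma)$ to return infinitely often to a bounded region modulo $\Gamma$, which is the radial condition. For (2), your use of the \v{S}varc--Milnor lemma to identify convex cocompactness with the orbit map being a quasi-isometric embedding, followed by the local-to-global principle for quasi-geodesics in a Gromov hyperbolic space, is the standard modern formulation of the structural stability of convex cocompact representations; this is essentially the content of the argument Corlette invokes. Your identification of the lower displacement bound as the crux is accurate, and the Morse-lemma mechanism you describe is the correct way to obtain it. One small remark: you should also note that $\Gamma$ is torsion-free (or at least that torsion elements act as elliptics fixing a point, which is stable under small deformation), since the quasi-isometric embedding argument as written only handles infinite-order elements; alternatively, observe that faithfulness on a neighborhood follows from discreteness plus the fact that a nontrivial representation of a finite group into a Lie group cannot be deformed to the trivial one.
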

\subsection{The Patterson-Sullivan measure}
\begin{thm}{\rm (cf.  \cite[p. 532]{Corlette})}
For any convex cocompact  subgroup $\Gamma$ of ${\rm F}_{4(-20)}$, there exists a probability measure $\tilde\mu_{\Gamma}$ supported on 	 $\Lambda(\Gamma)$ such that
\begin{align}\label{5.5}
\gamma^{*}\tilde\mu_{\Gamma}=|\gamma'|^{\delta(\Gamma)}\tilde\mu_{\Gamma}
\end{align}
for any $\gamma\in\Gamma,$ where $|\gamma'|$ is a conformal factor.
\end{thm}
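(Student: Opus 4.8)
The plan is to carry out the classical Patterson--Sullivan construction, in the form adapted to rank-one symmetric spaces by Corlette \cite{Corlette}. Fix a base point $o\in\mathcal U$ and use the octonionic hyperbolic distance $d(\cdot,\cdot)$ of (\ref{dis sie}). For $s>\delta(\Gamma)$ the Poincar\'e series $P_\Gamma(s):=\sum_{\gamma\in\Gamma}e^{-\frac{s}{2} d(o,\gamma o)}$ converges by the definition of $\delta(\Gamma)$, so one forms the probability measure on the compact metrizable space $\overline{\mathcal U}=\mathcal U\cup\partial\mathcal U$,
\[
\mu_s:=\frac{1}{P_\Gamma(s)}\sum_{\gamma\in\Gamma}e^{-\frac{s}{2} d(o,\gamma o)}\,\delta_{\gamma o}.
\]
By weak-$*$ compactness of the space of probability measures on $\overline{\mathcal U}$, there is a sequence $s_k\downarrow\delta(\Gamma)$ with $\mu_{s_k}\rightharpoonup\tilde\mu_\Gamma$; this $\tilde\mu_\Gamma$ is the candidate Patterson--Sullivan measure and is automatically a probability measure.

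First I would check that ${\rm supp}\,\tilde\mu_\Gamma\subseteq\Lambda(\Gamma)$. For a compact $K\subset\mathcal U$ only finitely many $\gamma$ satisfy $\gamma o\in K$ since $\Gamma$ acts properly discontinuously, hence $\mu_s(K)\le\#\{\gamma:\gamma o\in K\}/P_\Gamma(s)$, which tends to $0$ as $s\downarrow\delta(\Gamma)$ \emph{provided} $P_\Gamma(s)\to\infty$, i.e.\ provided $\Gamma$ is of divergence type; since the atoms $\gamma o$ accumulate only on $\overline{\Gamma o}\cap\partial\mathcal U=\Lambda(\Gamma)$, all the mass of $\tilde\mu_\Gamma$ then sits on $\Lambda(\Gamma)$. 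Divergence of $P_\Gamma$ at its critical exponent is where convex cocompactness enters: by Proposition \ref{cor}(1) the limit set is entirely radial, and the compactness of $\bar{M}_{\Gamma}$ supplies the uniform geometry needed for the shadow-lemma estimate that forces divergence. (If one prefers to avoid invoking divergence type, one replaces $e^{-\frac{s}{2}d}$ by $h(d)e^{-\frac{s}{2}d}$ with $h$ a Patterson slowly-varying weight; for convex cocompact groups the unmodified series already suffices.)

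Next I would establish the transformation law (\ref{5.5}). For $\gamma_0\in\Gamma$, reindexing the sum defining $\gamma_0^*\mu_s$ and using that $\gamma_0$ is an isometry of $\mathcal U$ gives
\[
\frac{\hbox{d}(\gamma_0^*\mu_s)}{\hbox{d}\mu_s}(\gamma o)=\exp\!\Big(-\frac{s}{2}\big[d(\gamma_0 o,\gamma o)-d(o,\gamma o)\big]\Big),
\]
and the right-hand side is the restriction to $\Gamma o$ of a function continuous on $\overline{\mathcal U}$ whose value at $\zeta\in\partial\mathcal U$ is $\exp(-\frac{s}{2}\beta_\zeta(\gamma_0 o,o))$, with $\beta_\zeta$ the Busemann cocycle, depending continuously on $\zeta$. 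Passing to the weak-$*$ limit along $s_k\downarrow\delta(\Gamma)$ (the Radon--Nikodym factor converging uniformly on $\overline{\mathcal U}$ and pullback by $\gamma_0$ being weak-$*$ continuous) yields $\gamma_0^*\tilde\mu_\Gamma=e^{-\frac{\delta(\Gamma)}{2}\beta_\zeta(\gamma_0 o,o)}\tilde\mu_\Gamma$. Finally one identifies the cocycle with the conformal factor: in the Siegel-domain model $\gamma_0$ acts on $(\partial\mathcal U,g_0)$ with $\gamma_0^*g_0=|\gamma_0'|^2g_0$ as in Proposition \ref{o2.9}, and a direct computation with the metric (\ref{hyperdis}) and the distance formula (\ref{dis sie}) shows $e^{-\frac{1}{2}\beta_\zeta(\gamma_0 o,o)}=|\gamma_0'(\zeta)|$ in the present normalization, turning the displayed identity into (\ref{5.5}).

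The hard part is not the general Patterson--Sullivan machinery but the two places where the specific geometry of $H^2_{\mathbb O}$ and ${\rm F}_{4(-20)}$ is needed: showing that a convex cocompact subgroup is of divergence type (so that the mass of $\mu_s$ escapes to $\Lambda(\Gamma)$), and establishing the exact relation $e^{-\frac{1}{2}\beta_\zeta(\gamma_0 o,o)}=|\gamma_0'(\zeta)|$ between the Busemann cocycle on $\partial\mathcal U$ and the conformal factor of the action of ${\rm F}_{4(-20)}$ on the octonionic Heisenberg group $\mathscr H$. Both are carried out in \cite{Corlette}, which we cite for this theorem.
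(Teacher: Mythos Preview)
Your proposal is correct and follows essentially the same Patterson--Sullivan construction that the paper sketches (the paper itself cites Corlette for the theorem and then recalls the construction to make the conformal factor explicit for later use). One minor difference worth noting: the paper builds in Patterson's modification with auxiliary coefficients $a_\gamma$ from the outset to force divergence of the denominator at $s=\delta(\Gamma)$, whereas you argue divergence directly from convex cocompactness and mention the slowly-varying weight only as an alternative; also, the paper's subsequent computation (equations (\ref{distance})--(\ref{gg})) shows that matching the Busemann cocycle to the specific conformal factor $|\gamma(\tilde\xi)_3|^{-1}$ of Proposition~\ref{o2.9} requires the extra twist $\mu_\Gamma=\chi\tilde\mu_\Gamma$, so your final sentence identifying $e^{-\frac12\beta_\zeta(\gamma_0 o,o)}$ directly with the Heisenberg conformal factor is slightly optimistic in this normalization---but that refinement is not needed for the theorem as stated.
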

See \cite{EMM, wang1} for Patterson-Sullivan measure for the complex case and \cite{Shi} in the quaternionic case. We need to know  the explicit conformal  factor $|\gamma'|$ for our purpose later.
Fix a reference point $v=(0,-1)\in \mathcal U,$ where $\tilde v=(-1,0,1)^t.$  Let us recall the definition of Patterson-Sullivan measure in  \cite{Patterson}.
Define a family of measures as
\begin{align*}
\tilde\mu_{s,z}:=\frac{\sum_{\gamma\in \Gamma}e^{-\frac{1}{2}s\cdot d(z,\gamma (w))}\delta_{\gamma (w)}}{\sum_{\gamma\in \Gamma}e^{-\frac{1}{2}s\cdot d(v,\gamma (w))}},\quad
\end{align*} for some fixed $w\in\mathcal U,$
where $d(\cdot,\cdot)$ is the hyperbolic distance defined in (\ref{dis sie}), $\delta_{\gamma (w)}$ is the Dirac measure supported at point $\gamma(w)$ and $v=(0,-1)$. For each  $s>\delta(\Gamma)$, this  is  a  finite positive measure  concentrated  on $\Gamma w\subset\overline{\Gamma w}$. Since the  set  of all  probability  measures  on $\overline{\Gamma w}$ is  compact (cf.  \cite[p. 532]{Corlette}),    there  is  a  sequence $s_{i}$ approaching $\delta(\Gamma)$ from  above  such  that $\tilde\mu_{s_{i},z}$ approaches  a  limit $\tilde\mu_{s,z}$. After  rewriting  the coefficients,  we  may  assume  that  the  denominator  in  the  definition  of $\mu_{s,z}$ diverges at $s=\delta(\Gamma)$. Thus,  we  replace  the  above  expression  by
\begin{align*}
\tilde\mu_{s,z}=\frac{\sum_{\gamma\in \Gamma}a_{\gamma}e^{-\frac{s}{2}\cdot d(z,\gamma (w))}\delta_{\gamma (w)}}{L(s,v)},\quad {\rm where}\ L(s,v)=\sum_{\gamma\in \Gamma}a_{\gamma}e^{-\frac{s}{2}\cdot d(v,\gamma (w))},
\end{align*}
with  the $a_{\gamma}$ chosen  so  that  the  denominator  converges  for
$s>\delta(\Gamma)$  and  diverges  for $s\leq\delta(\Gamma)$.  The definition of the measure $\tilde\mu_{s}$ does not depend on $w\in\mathcal U$ and the choice of $a_{\gamma}$ (cf. \cite[p. 532]{Corlette}). The \emph{Patterson-Sullivan measure} is the weak limit of these measures:
\begin{align*}
\tilde\mu_{\Gamma,z}=\lim_{s_{i}\rightarrow\delta(\Gamma)^{+}}\tilde\mu_{s_{i},z}.
\end{align*}
For any $\gamma \in {\rm F}_{4(-20)}$ and any $f\in C(\bar{\mathcal U})$, we have
\begin{equation*}
\begin{aligned}
(\gamma^{*}\tilde\mu_{s_{i},z})(f)&=\frac{\sum_{\widetilde \gamma\in\Gamma}a_{\widetilde \gamma}
e^{-\frac{s_{i}}{2}\cdot d(z,\widetilde \gamma (w))}\gamma^{*}\delta_{\widetilde \gamma (w)}(f)}{L(s_{i},v)}\\&=\frac{\sum_{\widetilde\gamma\in \Gamma}a_{\widetilde \gamma}e^{-\frac{s_{i}}{2}\cdot d(\gamma^{-1} (z),\gamma^{-1}(\widetilde \gamma (w)))}f(\gamma^{-1}{(\widetilde \gamma (w))})}{L(s_{i},v)} \\&=\frac{\sum_{\widetilde \gamma\in \Gamma}a_{\gamma\widetilde \gamma}e^{-\frac{s_{i}}{2}\cdot d(\gamma^{-1} (z),\widetilde \gamma (w))}f({\widetilde \gamma (w)})}{L(s_{i},v)}=\tilde\mu_{s_{i},\gamma^{-1} (z)}(f)
\end{aligned}
\end{equation*}
by the invariance of the octonionic hyperbolic distance $d(\cdot,\cdot)$  under the action of ${\rm F}_{4(-20)}.$ It is easy to see that $\{a_{ \gamma\widetilde\gamma}\}$ is also such sequence satisfying  the definition for fixed $\gamma$.
Let $s_{i}\rightarrow \delta^{+}$. We get
\begin{align*}
\gamma^{*}\tilde\mu_{\Gamma,z}=\tilde\mu_{\Gamma,\gamma^{-1} (z)}.
\end{align*}
The {\it Buseman function} is defined by \begin{align}
b_{\xi}(x)=\lim_{t\rightarrow\infty}(d(x,\sigma(t))-t),
\end{align}
where $\sigma:[0,\infty]\longrightarrow H_{\mathbb{O}}^{2}$ is a geodesic ray asymptotic to $\xi.$
Note that $$\left(\gamma(z)_{1},\gamma(z)_{2},
\gamma(z)_{3}\right)^t\sim\left(
\gamma(\tilde{z})_{1}\gamma(\tilde{z})_{3}^{-1},
\gamma(\tilde{z})_{2}\gamma(\tilde{z})_{3}^{-1},1\right)^t.$$
Recall that we have the following the Radon-Nikodym relation (cf. \cite[p. 77, 81]{Yue}):
\begin{align*}
\left.\frac{d\tilde\mu_{\Gamma,\gamma^{-1} (z)}}{d\tilde\mu_{\Gamma,z}}\right|_{\xi}=
e^{\frac{\delta}{2}(b_{\xi}(z)-b_{\xi}(\gamma^{-1}(z)))}
\end{align*}
and
\begin{equation*}\begin{aligned}
b_{\xi}(z)-b_{\xi}(w)=&\lim_{t\rightarrow\infty}\left(d(z,\sigma(t))-d(w,\sigma(t))\right) =2\lim_{t\rightarrow\infty}\left(\cosh^{-1}\left(z,{\sigma(t)}\right)- \cosh^{-1}\left(w,{\sigma(t)}\right)\right)\\ =&2\lim_{t\rightarrow\infty}\ln\frac{(z,\sigma(t))+\sqrt{(z,\sigma(t))^2 -1}}{(w,\sigma(t))+\sqrt{(w,\sigma(t))^2 -1}}=2\lim_{t\rightarrow\infty} \ln\frac{(z,\sigma(t))}{(w,\sigma(t))}\\=&2\ln\frac{|\tilde{z}^*D_{1}\tilde \xi|
|\tilde{w}^*D_{1}\tilde{w}|^{\frac{1}{2}}}
{|\tilde{w}^*D_{1}{\tilde \xi}||\tilde{z}^*D_{1}\tilde{z}|^{\frac{1}{2}}},
\end{aligned}\end{equation*}
for $\xi\in \partial\mathcal U.$  The second last identity holds by $\cosh^{-1}s=\ln\left(s+\sqrt{s^2-1}\right)$ for $s>0$ and $(z,\sigma(t)),(w,\sigma(t))\rightarrow\infty.$

 For $z=( 0,-1),\eta=(\eta_1,\eta_2)\in \mathcal U,$ denote
\begin{align}\label{p1}
\varphi(\eta):=\left|{\tilde z}^*D_1\tilde\eta\right|^2,\quad \chi(\eta):=\varphi(\eta)^{\frac{\delta(\Gamma)}{2}},
\end{align}where $\tilde z=(-1, 0,1)^t,\tilde\eta=(\eta_2,\eta_1,1)^t$ and $D_1$ is given by (\ref{d1}).
Then we have \begin{align} \varphi(\gamma(\eta))=\left|{\tilde z}^*D_1\widetilde{\gamma(\eta)}\right|^2.
\end{align}
Then  we have
\begin{equation}\label{distance}
\begin{aligned}
\left.\frac{{\rm d}\tilde\mu_{\Gamma,{\gamma^{-1}(z)}}}{{\rm d}\tilde\mu_{\Gamma,{z}}}\right|_{\xi}
&=\lim_{\eta\rightarrow\xi}\left(\frac{|\tilde{z}^*D_{1}\tilde \eta|
\left|\widetilde{\gamma^{-1}(z)}^*D_{1}\widetilde{\gamma^{-1}(z)}\right|^{\frac{1}{2}}}
{\left|\widetilde{\gamma^{-1}(z)}^*D_{1}{\tilde{\eta}}\right||\tilde{z}^* D_{1}\tilde{z}|^{\frac{1}{2}}
}\right)^{\delta(\Gamma)}\\&=\lim_{\eta\rightarrow\xi}
\left|\frac{(z,\eta)}{(\gamma^{-1} (z),\eta)}\right|^{\delta(\Gamma)}
=\lim_{\eta\rightarrow\xi}\left|\frac{(z,\eta)}{( z,\gamma(\eta))}\right|^{\delta(\Gamma)}\\
&=\lim_{\eta\rightarrow\xi}\left|\frac{\frac{\left|{\tilde z}^*D_1\tilde\eta\right|^2}{\left|{\tilde \eta}^*D_1\tilde\eta\right|}}{\frac{\left|{\tilde z}^*D_1\widetilde{\gamma(\eta)}\right|^2}{\left|\widetilde{\gamma(\eta)}^*D_1 \widetilde{\gamma(\eta)}\right|}}\right|^{\frac{\delta(\Gamma)}{2}}=\frac{1}{
|\gamma(\tilde{\xi})_3|^{\delta(\Gamma)}}\left|\frac{\varphi(\xi)}{\varphi(\gamma(\xi))} \right|^{\frac{\delta(\Gamma)}{2}},
\end{aligned}
\end{equation}
where $\eta\in \mathcal U$ and  $|(\cdot,\cdot)|=\cosh\left(\frac{1}{2}d(\cdot,\cdot)\right)$  is invariant under ${\rm F}_{4(-20)}.$
The last identity  holds since \begin{align}\label{p2}
\frac{\left|\widetilde{\gamma(\eta)}^*D_1 \widetilde{\gamma(\eta)}\right|}{\left|{\tilde \eta}^*D_1\tilde\eta\right|}=\frac{2{\rm Re}\left[{{\gamma(\tilde{\eta})_1}\overline{\gamma(\tilde{\eta})_3}}\right]+
\left|\gamma(\tilde{\eta})_2\right|^2}{\left|\gamma(\tilde\eta)_3\right|^2\left(2{\rm Re}\ \eta_2+|\eta_1|^2\right)}=\frac{1}{\left|\gamma(\tilde\eta)_3\right|^2},
\end{align}
by $$2{\rm Re}\left[{{\gamma(\tilde{\eta})_1}\overline{\gamma(\tilde{\eta})_3}}\right]+
\left|\gamma(\tilde{\eta})_2\right|^2= 2{\rm Re}\ \eta_2+|\eta_1|^2,$$ which can be checked directly for all dilation $D_\delta,$ left translations $\tau_{(y,s)},$   rotation $S_\mu$  and  inversion $R$ in (\ref{a1})-(\ref{a2}).
So if we  define $\mu_{\Gamma}:=\mu_{\Gamma,z}=\chi\tilde\mu_{\Gamma,z} $ with $\chi$ given by (\ref{p1}), we have
\begin{align}\label{gg}
\gamma^{*}\hbox{d}\mu_{\Gamma}(\xi)=\frac{1}{
|\gamma(\tilde{\xi})_3|^{\delta(\Gamma)}} \hbox{d}\mu_{\Gamma}(\xi).
\end{align}

Note that
$\frac{1}{|\gamma(\tilde{\xi})_3|}$ coincides   with $\phi$ given in (\ref{o2.9}), i.e. $\gamma^*g_0=\phi^2g_0$ with $\phi=\frac{1}{|\gamma(\tilde{\xi})_3|}.$ For example, let $\gamma$ be the inversion given in (\ref{a1}), then we  have $$
\gamma(\tilde \xi)=\begin{pmatrix}0&0&1\\0&-1&0\\1&0&0\end{pmatrix} \begin{pmatrix}\xi_2\\\xi_1\\1\end{pmatrix}=\begin{pmatrix}1\\-\xi_1\\\xi_2 \end{pmatrix},$$ i.e. $\left|\gamma(\tilde{\xi})_3\right|^2=\left|\xi_2\right|^2 =\left||x|^2-t\right|^2=|x|^4+|t|^2$ by (\ref{sth}).
\begin{rem}
In the CR and qc cases \cite{wang1,Shi}, we use the ball model and choose $z$ to be the origin. Then $\mu_{\Gamma,z}$ automatically has conformal factor as the action of the group on the sphere.  But in the OC case, we use the flat model on which we have an extra conformal factor $\chi$ given by (\ref{p1}). But the flat model has many advantages.
\end{rem}

\subsection{An invariant OC metric of Nayatani type}

When the OC manifold is $\Omega(\Gamma)/\Gamma$ for some convex cocompact subgroup $\Gamma$ of ${\rm F}_{4(-20)}$, we can construct an invariant
OC metric $g_{\Gamma}$, which is the OC generalization of Nayatani's canonical metric in conformal geometry \cite{Nayatani}. See \cite{wang1} and \cite{Shi} for CR case and qc case, respectively.

Recall $g_{\Gamma}=\phi_{\Gamma}^{\frac{4}{Q-2}} g_{0}$ defined in (\ref{ggamma}).
Since
\begin{align*}
G_{0}(\gamma(\xi),\gamma(\zeta))=|\gamma(\tilde{\xi})_3|
^{\frac{Q-2}{2}}|\gamma(\tilde{\zeta})_3|^{\frac{Q-2}{2}}
G_{0}(\xi,\zeta)
\end{align*}
by the conformal factor  (\ref{gg}),  Proposition \ref{o2.9} and the transformation formula (\ref{32}) of the  Green functions, we have
\begin{equation}\label{6.3}
\begin{aligned}
\phi_{\Gamma}(\gamma(\xi))&= \left(\int_{\Lambda({\Gamma})}G_{0}
^{\frac{2\delta(\Gamma)}{Q-2}}(\gamma(\xi),\zeta)
\hbox{d}{\mu_{\Gamma}(\zeta)}\right)^{\frac{Q-2}{2\delta(\Gamma)}}
=\left(\int_{\Lambda({\Gamma})}G_{0}^
{\frac{2\delta(\Gamma)}{Q-2}}(\gamma(\xi),\gamma(\zeta))
\hbox{d}\gamma^{*}{\mu_{\Gamma}(\zeta)}\right)^{\frac{Q-2}{2\delta(\Gamma)}}\\
&=\left(\int_{\Lambda({\Gamma})} \left|\gamma(\tilde\xi)_3\right|^{\delta({\Gamma})} G_{0}^{\frac{2\delta(\Gamma)}{Q-2}} (\xi,\zeta) \hbox{d}{\mu_{\Gamma}(\zeta)}\right)^{\frac{Q-2}{2\delta(\Gamma)}}
=\left|\gamma(\tilde\xi)_3\right|^{\frac{Q-2}{2}} \phi_{\Gamma}(\xi).
\end{aligned}
\end{equation}
Therefore, (\ref{6.3}) together with Proposition \ref{o2.9} and  (\ref{gg}) implies that
$\gamma^{*}g_{\Gamma}=g_{\Gamma}.$
So it induces an OC metric  on the compact OC manifold $\Omega(\Gamma)/\Gamma$.

The proof of Theorem \ref{6.1}  is similar to the CR case \cite[p. 265]{wang1} and qc case  \cite[p. 302]{Shi}, we omit  details.

\end{document}